\documentclass[a4paper,reqno,11pt]{amsart}
\usepackage{amsmath}
\usepackage{amsthm}
\usepackage{amscd}
\usepackage{amssymb}
\usepackage{amsfonts}
\usepackage{latexsym}
\usepackage[mathscr]{eucal}

\newtheorem{thm}{Theorem}[section]
\newtheorem{prop}[thm]{Proposition}
\newtheorem{lemma}[thm]{Lemma}
\newtheorem{cor}[thm]{Corollary}
\newtheorem{defn}[thm]{Definition}
\newtheorem{exam}{Example}[section]



\def\O{{\mathcal O}}

\def\Aut{\hbox{\rm Aut}}
\def\K{{\mathcal K}}

\def\T{{\mathcal T}}

\def\F{{\mathcal F}}
\def\L{{\mathcal L}}

\def\C{{\mathcal C}}

\def\comp{{\mathbf {C}}}

\def\cst{{C${}^*$}}

\setlength{\textwidth}{16.0cm}
\setlength{\textheight}{21.0cm}
\setlength{\oddsidemargin}{0.0cm}
\setlength{\evensidemargin}{0.9cm}

\pagestyle{plain}
\begin{document}
\title{Traces on cores of \cst -algebras associated with self-similar maps}
\date{}
\author{Tsuyoshi Kajiwara}
\address[Tsuyoshi Kajiwara]{Department of Environmental and
Mathematical Sciences,
Okayama University, Tsushima, 700-8530,  Japan}

\author{Yasuo Watatani}
\address[Yasuo Watatani]{Department of Mathematical Sciences,
Kyushu University, Motooka, Fukuoka, 819-0395, Japan}
\maketitle
 \hyphenation{cor-re-spond-ences}
\begin{abstract}
We completely classify  the extreme tracial states on
the cores of the \cst -algebras associated with  self-similar maps on
compact metric spaces.  We present a complete list of them. 
The extreme tracial states are the 
union of the 
discrete type tracial states given by measures supported on the 
finite  orbits of the branch
points and a continuous type  tracial state given by the Hutchinson measure
on the original self-similar set. 
\medskip\par\noindent
KEYWORDS: traces, core, self-similar maps, \cst -cor\-re\-spond\-ences

\medskip\par\noindent
AMS SUBJECT CLASSIFICATION: 46L08, 46L55

\end{abstract}
\section
{Introduction}
We investigate a relation between self-similar sets and $C^*$-algebras. 
Most of self-similar sets are constructed from iterations of proper
contractions.   A self-similar map on a compact metric
space $K$ is a family of proper contractions 
$\gamma = (\gamma_1,\dots,\gamma_N)$ on $K$ such that $K =
\bigcup_{i=1}^N\gamma_i(K)$.  In our former work 
Kajiwara-Watatani \cite{KW1}, we introduced $C^*$-algebras 
associated with self-similar maps on compact metric spaces as 
Cuntz-Pimsner algebras for certain \cst-correspondences
and show that the associated \cst
-algebras are simple and purely infinite.  A related study 
on $C^*$-algebras associated with iterated function systems 
is  done by Castro \cite{Ca}.  
A generalization to 
Mauldin-Williams graphs is given by Ionescu-Watatani \cite{IW}. 

The gauge action of the associated \cst-algebra is an 
 important tool. The fixed point algebra under the gauge action is 
called the core. Recall that the dimension group of a topological Markov shift 
is exactly the K-theory of the core 
of the corresponding Cuntz-Krieger algebra \cite{CK}. 
This idea is extended to the subshifts
 by Matsumoto \cite{Ma}.  Therefore we expect that many 
informations of  a self-similar map  
as a dynamical system are contained in the structure of the  core of the 
corresponding $C^*$-algebra. In this note 
we study the trace structure of the fixed point algebra under the gauge action. In fact, the trace structure is described  by  measures supported on the 
orbits of the 
branched points of the  self-similar map and the Hutchinson measure. 
We present a complete list of the extreme traces on the core to classify them.

One of the key points is the structure of the cores of
Cuntz-Pimsner algebras described by Pimsner \cite{Pi}.  
We need a result on the extension of traces on a subalgebra and
an ideal to their sum modified in \cite{KW2}  following after Exel and Laca 
\cite{EL}. We also recall the Rieffel
correspondence of traces between Morita equivalent \cst -algebras, 
which plays an important role in our study.  

The extreme traces are described by  measures supported on the 
finite orbits of branched points 
of a self-similar map. 
For a branched point $b$ and an integer $r \geq 0$, we consider a family 
$(\mu^{(b,r)}_i)_{i= 0}^r$ of 
discrete measures on $K$ satisfying a compatibility condition such that 
the support of  $\mu^{(b,r)}_i$ is on the 
$(r-i)$-th $\gamma$-orbit of the branch point $b$ for $0 \leq i \leq r$. 
 By the Rieffel correspondence
of traces, we construct finite  traces  on the compact
algebras of multiple tensor products of the \cst -correspondence constructed
from the original self-similar map.  A sequence of such compatible 
traces gives an
extreme trace $\tau^{(b,r)}$ on the core.  
On the other hand, the Hutchinson measure on the
self-similar set $K$ also gives a continuous type trace $\tau^{\infty}$ 
 on the core.
We shall show that these traces  $\tau^{(b,r)}$ $b \in B_{\gamma}$,
$r=0,1,2,\dots$ and $\tau^{\infty}$ 
 exhaust  the extreme traces on the core $\F^{(\infty)}$.

For the classification of extreme traces, the analysis of point
masses is  essential.  A tracial state on the core gives 
the  traces on compact algebras $K(X^{\otimes n})$ by restriction. 
We get a certain relation among the point masses 
of the measures on $K$ corresponding 
to the traces on $K(X^{\otimes n})$ and $K(X^{\otimes n+1})$ 
by Morita equivalence. 
The difficulty of the analysis comes from the fact that 
$K(X^{\otimes n})$ is {\it not} included in $K(X^{\otimes n+1})$. 
Thus the necessary condition can not be described by simple 
restriction. Here comes a technical use of a countable 
basis and a result of the extension  of traces on a subalgebra and
an ideal to their sum modified in \cite{KW2} following after Exel and Laca 
\cite{EL}.

For a tracial state
of the core, we remove the discrete parts of it and get a finite trace
which has no point mass.  By the argument of principle of proper contraction 
on tracial state space, we can show
that  any tracial state on the core without  point mass on $A = C(K)$ 
is exactly the trace associated with the 
Hutchinson measure.  Combining these, we can present a complete list of 
of extreme traces on the core. This gives a complete classification of 
the traces.  

In \cite{IKW} we completely classified the KMS states for the gauge action 
on the  associated \cst-algebra $\O_{\gamma}$. 
We shall show that which trace on the core is extended to a KMS state on  
 $\O_{\gamma}$. We recommend  a paper \cite{KR} by Kumjian and J. Renalut 
for a general study of the KMS states on C${}^*$-algebras 
associated to expansive maps. In many cases, the inverse braches of 
an expansive map gives a self-similar map in our study.

The content of the paper is as follows:  In section 2,
we recall some basic notations and typical examples of 
self-similar maps. In section 3, we
give some  preliminary results for   \cst
-correspondences.  In section 4, we describe Rieffel correspondence of 
{\it finite} traces between \cst -algebras which are 
Morita equivalent by an equivalence bimodule of finite degree type 
using countable bases.  
In section 5, we apply a result of trace extension
to our situation.  We get a certain  relation between point masses 
of the measures
on $K$ corresponding to the traces on compact algebras of the tensor
products of the \cst -correspondences.  In section 6, we construct model 
traces on the core .  We also need a trace extension result for
our construction.  In section 7, we classify the extreme  traces on the
core.  Since the set of branched points is assumed to be finite, we can
reduce the classificaion of the traces 
to the uniqueness of the invariant measure without point mass. 
In fact, the final step of the classification is completed by 
the contraction principle on  the 
metric space of the probability measures using  the Lipschitz norm 
and the fact that the diameter of it is finite.
 
\section
{Self-similar maps}
Let $(\Omega,d)$ be a (separable) complete  metric space.  A map 
$f:\Omega \rightarrow \Omega$ is called
a proper contraction if there exists a constant $c$ and $c'$ with
 $0<c' \leq c <1$ such that
$0<  c'd(x,y) \leq d(f(x),f(y)) \le cd(x,y)$ for any  $x$, $y \in \Omega$.

We consider 
a family $\gamma = (\gamma_1,\dots,\gamma_N)$ of $N$ proper contractions 
on $\Omega$. We assume that $N \geq 2$. Then there exists a unique non-empty compact set  
$K \subset \Omega$ which is self-similar in the sense that 
$K = \bigcup_{i=1}^N \gamma_i(K)$. See 
Falconer \cite{F} and Kigami \cite {Kig} for more on 
fractal sets. 

In this note we usually forget an ambient space $\Omega$ as in \cite{KW1} 
and start 
with the following setting: Let $(K,d)$ be a compact metric set and  
 $\gamma = (\gamma_1,\dots,\gamma_N)$ is a family of $N$ proper contractions 
on $K$. We say that 
$\gamma$ is a self-similar map on $K$ if
$K = \bigcup_{i=1}^N \gamma_i(K)$.

\begin{defn} 
We say that $\gamma$ satisfies the open set condition if there exists an
open subset $V$ of $K$ such that $\gamma_j(V) \cap \gamma_k(V)=\phi$ for
$j \ne k$ and  $\bigcup_{i=1}^N \gamma_i(V) \subset V$. Then  
$V$ is an open dense subset of $K$. See a book \cite{F} by Falconer, for 
example. 
\end{defn}

Let $\Sigma  = \{\,1,\dots,N\}$.  For $k \ge 1$, we put $\Sigma^k = \{\,1,\dots,N\}^k$.
For a self-similar map $\gamma$ on a compact metric space $K$, we introduce 
the following subsets of $K$: 
\begin{align*}
B_{\gamma} = &\{\,b \in K\,|\, b =\gamma_{i}(a)=\gamma_{j}(a), \,\,
\text{for some}\,\, a \in K\, \text{and } \, i \ne j,\,\, \}, \\
C_{\gamma} = & \{\,a\in K\,|\,\gamma_j(a) \in B_{\gamma}\,\,\text{for
some}\,\,j\,\},\\
P_{\gamma} = & \{\,a\in K\,|\, \exists k \ge 1,\,\exists (j_1,\dots,j_k)
\in \Sigma^k\,\,
 \text{such that}\,\, \gamma_{j_1}\circ \cdots \circ \gamma_{j_k}(a) \in
B_{\gamma}\}, \\
O_{b,k}  = &\{\,\gamma_{j_1}\circ \cdots \circ \gamma_{j_k}(b)\,|\,
           (j_1,\dots,j_k) \in \Sigma^{k}\,\}\quad (k \ge 0),\quad
O_b    =  \bigcup_{k=0}^{\infty}O_{b,k}, \text{where} \,  O_{b,0} =\{b\}, \\
Orb = & \bigcup_{b \in B_{\gamma}} O_b. 
\end{align*}

We call $B_{\gamma}$ the branch set of $\gamma$, $C_{\gamma}$ the branch
value set of $\gamma$ and $P_{\gamma}$ the  postcritical set of $\gamma$.
We define branch index at $(\gamma_j(y),y)$ by 
$e_{\gamma}(\gamma_j(y),y) = \verb!#!\{i \in
\Sigma |
\gamma_j(y)=\gamma_i(y)\}$.
We call $O_{b,k}$ the $k$-th $\gamma$ orbit of $b$, and $O_b$ the $\gamma$
orbit
of $b$.

{\bf Assumption A.} 
 Throughout the paper, we assume that a self-similar map 
$(K,\gamma)$ satisfies the following for simplicity:
\begin{enumerate}
 \item Their exists a continuous map $h:K \rightarrow K$ such that 
$h(\gamma_j(y))=y$ for $y \in K$ and $j=1,\dots,N$, and 
$h^{-1}(y) = \bigcup_{j = 1}^{N} \gamma_j(y)$. 
 \item The sets $B_{\gamma}$ and $P_{\gamma}$ are finite sets.
 \item  The intersection of $B_{\gamma}$ 
and $\bigcup_{b \in B_{\gamma}} \bigcup_{k = 1}^{\infty} O_{b,k} $ is empty.
\end{enumerate}

Under the assumption A above, $B_{\gamma}$ is the 
branch set of $h$ and $C_{\gamma} = h(B_{\gamma})$ is the branch value 
of $h$. Moreover 
$P_{\gamma} = \bigcup_{k=1}^{\infty} h^k(B_{\gamma})$, 
$O_{b,k} = h^{-k}(b)$ and  
$O_b    =  \bigcup_{k=0}^{\infty} h^{-k}(b)$ 
the backward orbit of $b$ under $h$. The condition (3) of the above 
assumption A means that for any branch point $b$, 
the backward orbit of $b$ under $h$ does not intersect with the 
set of branch points except the 0-th orbit $b$. The condition (3) is 
not trivial but satisfied in many cases including a tent map. The 
condition (3) can be replaced by the following condition (3)': \\

(3)' $B_{\gamma} \cap   P_{\gamma}$ is empty.  \\

In fact, not(3) is equivalent to that there exists 
$y \in B_{\gamma} \cap ( \bigcup_{k=1}^{\infty} h^{-k}(b))$. 
This means that  there exist $y,b \in B_{\gamma}$ and an integer $k \geq 1$ 
such that $h^k(y) = b$. This is equivalent to not(3)'. 

Moreover,  since we assume that  $P_{\gamma}$ is finite,  $(K,\gamma)$ satisfiesthe open set condition.  Let  $V=K \backslash
P_{\gamma}$. Then 
$V$ is an open dense subset of $K$ satisfying $\gamma_j(V) \cap
\gamma_k(V) = \emptyset$ for $j \ne k$. In fact, on the contrary suppose that 
$\gamma_j(V) \cap
\gamma_k(V) \not= \emptyset$. Then there exist $u,v \in V$ such that 
$b := \gamma_j(u) =  \gamma_k(v)$. Then $h(b) = u = v$ and 
$u \in C_{\gamma} \subset P_{\gamma}$, which is a contradiction. 
We show that $V$ is invariant under $\gamma$. 
On the contrary suppose that
 $x =\gamma_j(u)$ is not in $V$ for 
some $u \in V$. Then $x$ is  in $P_{\gamma}$ and $u = h(x)$ is not in 
$P_{\gamma}$. Hence $x = h^k(b)$ for some $b \in B_{\gamma}$ and a 
natural number $k$, so  
$u = h^{k+1}(b)$, which contradicts that $u$ is not in $P_{\gamma}$.

\begin{exam} \label{ex:tent} {\rm (tent map) }
Let $K=[0,1]$, $\gamma_1(y) = (1/2)y$ and $\gamma_2(y)
= 1-(1/2)y$.  
 Then a family $\gamma = (\gamma_1,\gamma_2)$ of proper contractions 
is a self-similar map.
We note that $B_{\gamma}=\{\,1/2\,\}$, $C_{\gamma} = \{\,1\,\}$ and 
$P_{\gamma} = \{\,0,1\,\}$.
A continuous map $h$ defined by
\[
 h(x) = \begin{cases}
         2x & \quad  0 \le x \le 1/2  \\
         -2x + 2 & \quad 1/2 \le x \le 1
        \end{cases}
\]
satisfies Assmption A (1). The map $h$ is the ordinary tent map
on $[0,1]$, and
$(\gamma_1,\gamma_2)$ is the inverse branches of the tent map $h$.
We note that $B_{\gamma}=\{\,1/2\,\}$, $C_{\gamma} = \{\,1\,\}$ and 
$P_{\gamma} = \{\,0,1\,\}$. We see that $h(1/2) = 1, h(1) = 0, h(0) = 0$. 
Hence a self-similar map $\gamma = (\gamma_1,\gamma_2)$ satisfies 
the assumption A above. 
\end{exam}

\begin{exam}
Let $K=[0,1]$.  For $n \geq 2$, we take $n+1$ numbers $t_i$ $(i=0,\dots,n)$ 
such that
$0=t_0 < t_1 < \dots < t_{n-1}<t_n = 1$.  Let $h$ be a continuous map
from $K$ to $K$ such that $h$ is linear on each subinterval
$[t_i,t_{i+1}]$ and takes value $0$ or $1$ at the endpoints of the
subinterval interchangingly.  We denote by $\gamma_i$ $(i=1,\dots,n)$ 
the inverse
branches of $h$.  Then $\gamma = (\gamma_1,\dots,\gamma_N)$ satisfies
the assumption A.  
We note that $B_{\gamma} = \{\,t_1,\dots,t_{n-1}\}$,
and $P_{\gamma}=\{\,0,1\,\}$. If $n = 2$, then $C_{\gamma} = \{0\}$ or 
$ \{1\}$. If $n \geq 3$, then $C_{\gamma} = \{\,0,1\,\}$.
\end{exam}

\begin{exam} \cite{KW1} {\rm (Koch curve)}
Let $\omega = \frac{1}{2}+i\frac{\sqrt{\,3}}{6} \in \comp$.  Consider
the two contraction $\tilde{\gamma}_1$, $\tilde{\gamma}_2$ on the
triangle domain
$\Delta \subset \comp$ with vertices $\{\,0,\omega,1\,\}$
defined by $\tilde{\gamma}_1(z) = \omega \overline{z}$ and
$\tilde{\gamma}_2(z)
=(1-\omega)(\overline{z}-1)+1$ for $z \in \comp$.
Let $\tau$ be the reflection in the line $x=1/2$.  We put
$\gamma_1 = \tilde{\gamma}_1$ and $\gamma_2 = \tilde{\gamma}_2\circ
 \tau$.  Put $K = \bigcap_{n=1}^{\infty}\bigcap_{(j_1,\dots,j_n)\in
 \Sigma^n} \gamma_{j_1}\circ \cdots \circ \gamma_{j_n}(\Delta)$.  Then
 $K$ is the Koch Curve and $(K,\gamma)$ is a self-similar map satisfying
 the assumption A. We note that $B_{\gamma} = \{\omega \}$, 
$C_{\gamma} = \{1\}$ and 
$P_{\gamma} = \{0,1\}$. 
\end{exam}

\begin{exam}\cite{KW1} {\rm (Sierpinski gasket)}
Let $P=(1/2, \sqrt{\,3}/2)$, $Q=(0,0)$, $R=(1,0)$,
$S=(1/4,\sqrt{\,3}/4)$, $T=(1/2,0)$ and $U=(3/4,\sqrt{\,3}/4)$.
Let $\tilde{\gamma}_1$, $\tilde{\gamma}_2$ and $\tilde{\gamma}_3$ be
contractions on the regular triangle $T$ on ${\bf R}^2$ with three
vertices $P$, $Q$ and $R$ such that
\[
\tilde{\gamma}_1(x,y) =
\left(\frac{x}{2}+\frac{1}{4},\frac{1}{2}y\right), \quad
\tilde{\gamma}_2(x,y)=\left(\frac{x}{2},\frac{y}{2}\right), \quad
\tilde{\gamma}_3(x,y) = \left(\frac{x}{2}+\frac{1}{2},\frac{y}{2}\right).
\]
We denote by $\alpha_{\theta}$ a rotation by angle $\theta$.
We put $\gamma_1 = \tilde{\gamma}_1$, $\gamma_2 = \alpha_{-2\pi/3}\circ
\tilde{\gamma}_2$, $\gamma_3 = \alpha_{2\pi/3}\circ \tilde{\gamma}_3$.
Then  $\gamma_1(\Delta PQR) = \Delta PSU$, 
$\gamma_2(\Delta PQR) = \Delta TSQ$ and 
$\gamma_3(\Delta PQR) = \Delta TRU$, 
where $\Delta ABC$ denotes the regular triangle whose vertices
are A, B and C.
Put $K = \bigcap_{n=1}^{\infty}\bigcap_{(j_1,\dots,j_n)\in \Sigma^n}
(\gamma_{j_1}\circ \cdots \circ \gamma_{j_n})(T)$.
Then $(K,\gamma)$ is a self similar map satisfying assumption A, and $K$
is the Sierpinski gasket.  
$B_{\gamma}=\{\,S,T,U\,\}$, 
$C_{\gamma} = P_{\gamma}=\{\,P,Q,R\,\}$ and $h$ is given
 by
\[
 h(x,y) =
\begin{cases}
    &  \gamma_{1}^{-1}(x,y)  \quad (x,y) \in \Delta PSU \cap K\\
    &  \gamma_{2}^{-1}(x,y)  \quad (x,y) \in \Delta TSQ \cap K \\
    &  \gamma_{3}^{-1}(x,y)  \quad (x,y) \in \Delta TRU \cap K,
\end{cases}
\]

\end{exam}

\section {Hilbert $C^*$-bimodules}

Following \cite{KW1}, we introduce Hilbert $C^*$-bimodules 
(or \cst-correspondences) and their Cuntz-Pimsner
\cst -algebras
for self-similar maps.

Let $A$ be a $C^*$-algebra and $X$ a Hilbert right $A$-module.  
Let $\L(X)$ the set of bounded linear operators on $X$ which have
adjoints with respect to the $A$-inner product $(x|y)_A$. 
We denote by $\theta_{x,y}$ the "rank one"
operator given by $\theta_{x,y}(z) = x(y|z)_A$ for 
$x,y, z \in X$. We denote  by $\K(X)$ the
\cst -algebra generated by  rank one operators, which is sometimes called 
the compact ideal.

We say that 
$X$ is a Hilbert bimodule over a $C^*$-algebras $A$ 
if $X$ is a Hilbert right  $A$-
module with a *-homomorphism $\phi : A \rightarrow L(X)$.  
We always assume 
that $X$ is full and $\phi$ is injective.

We recall the construction of the Cuntz-Pimsner \cst -algebra 
$\O_X$ associated with $X$.  
Let $F(X) = \bigoplus _{n=0}^{\infty} X^{\otimes n}$
be the full Fock module of $X$ with a convention $X^{\otimes 0} = A$. 
 For $\xi \in X$, the creation operator $T_{\xi} \in L(F(X))$ is defined by 
\[
T_{\xi}(a) =  \xi a  \qquad \text{and } \ 
T_{\xi}(\xi _1 \otimes \dots \otimes \xi _n) = \xi \otimes 
\xi _1 \otimes \dots \otimes \xi _n .
\]
We define $i_{F(X)}: A \rightarrow L(F(X))$ by 
$$
i_{F(X)}(a)(b) = ab \qquad \text{and } \ 
i_{F(X)}(a)(\xi _1 \otimes \dots \otimes \xi _n) = \phi (a)
\xi _1 \otimes \dots \otimes \xi _n 
$$
for $a,b \in A$.  The Cuntz-Toeplitz algebra ${\mathcal T}_X$ 
is the C${}^*$-algebra acting on $F(X)$ generated by $i_{F(X)}(a)$
with $a \in A$ and $T_{\xi}$ with $\xi \in X$. 

Let $j_K | K(X) \rightarrow {\mathcal T}_X$ be the homomorphism 
defined by $j_K(\theta _{\xi,\eta}) = T_{\xi}T_{\eta}^*$. 
We consider the ideal $I_X := \phi ^{-1}(K(X))$ of $A$. 
Let ${\mathcal J}_X$ be the ideal of ${\mathcal T}_X$ generated 
by $\{ i_{F(X)}(a) - (j_K \circ \phi)(a) ; a \in I_X\}$.  Then 
the Cuntz-Pimsner algebra ${\mathcal O}_X$ is defined as 
the quotient ${\mathcal T}_X/{\mathcal J}_X$ . 
Let $\pi : {\mathcal T}_X \rightarrow {\mathcal O}_X$ be the 
quotient map.  
We set $S_{\xi} = \pi (T_{\xi})$ and $i(a) = \pi (i_{F(X)}(a))$. 
Let $i_K : K(X) \rightarrow {\mathcal O}_X$ be the homomorphism 
defined by $i_K(\theta _{\xi,\eta}) = S_{\xi}S_{\eta}^*$. Then 
$\pi((j_K \circ \phi)(a)) = (i_K \circ \phi)(a)$ for $a \in I_X$.   

The Cuntz-Pimsner algebra ${\mathcal O}_X$ is 
the universal C${}^*$-algebra generated by $i(a)$ with $a \in A$ and 
$S_{\xi}$ with $\xi \in X$  satisfying that 
$i(a)S_{\xi} = S_{\phi (a)\xi}$, $S_{\xi}i(a) = S_{\xi a}$, 
$S_{\xi}^*S_{\eta} = i((\xi | \eta)_A)$ for $a \in A$, 
$\xi, \eta \in X$ and $i(a) = (i_K \circ \phi)(a)$ for $a \in I_X$.
We usually identify $i(a)$ with $a$ in $A$.  
We also identify $S_{\xi}$ with $\xi \in X$ and simply write $\xi$
instead of $S_{\xi}$.  
There exists an action 
$\beta : {\mathbb R} \rightarrow \Aut \ {\mathcal O}_X$
defined by $\beta_t(\xi) = e^{it}\xi$ for $\xi\in X$ and $\beta_t(a)=a$ 
for $a\in A$, which is called the {\it gauge action}.

Let  $\gamma$  be a self-similar map  on a compact metric space $K$ 
satisfying Assumption A. 
Let $A = {\rm C}(K)$, $\C=\bigcup_{i=1}^N\{\,(\gamma_i(y),y)\,|\,y \in
K\,\}$,
and $X = {\rm C}(\C)$.
For $f$, $g \in X$ and $a$, $b \in A$, we define left and right $A$
module actions on
$X$ and an $A$-inner product by
\begin{align*}
 (a\cdot f \cdot b)(\gamma_j(y),b)  = &
a(\gamma_{j}(y))f(\gamma_j(y),y)b(y) \quad
  y \in K, \quad j=1,\dots,N  \\
 (f|g)_A(y) = & \sum_{j=1}^N\overline{f(\gamma_j(y),y)}g(\gamma_j(y),y)
 \quad y \in K.
\end{align*}

We define the representation $\phi$ of $A$ in $\L(X)$ by
$\phi(a)f=a\cdot f$.
By \cite{KW1}, $(X,\phi)$ is proved to be a $C^*$-correspondence over
$A$.  We denote by
$\O_{\gamma}$ the Cuntz-Pimsner $C^*$-algebra associated with $X$ and 
call it the Cuntz-Pimsner algebra $\O_{\gamma}$ 
associated with a self-similar map ${\gamma}$.

We put $J_X=\phi^{-1}(\K(X))$. Then $J_X$ is an ideal of $A$. 

We recall some  basic facts on the Cuntz-Pimsner algebra $\O_{\gamma}$ 
associated with a self-similar map ${\gamma}$.  

\begin{lemma} \cite{KW1}
Let  $\gamma$  be a self-similar map  on a compact metric space $K$ 
If $(K,\gamma)$ satisfies the  open set condition, then 
the associated Cuntz-Pimsner algebra
$\O_{\gamma}$ is simple and purely infinite.  Moreover $J_X$ 
remembers the branch set $B_{\gamma}$ so that 
$J_X
= \{\,f \in A\,|\,f(b)=0 \quad \text{for each }b \in B_{\gamma}\,\}$

\end{lemma}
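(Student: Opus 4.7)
My plan is to treat the two assertions separately, with the main analytic work reserved for the description of $J_X$.

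For simplicity and pure infiniteness of $\O_\gamma$, the strategy is to verify the hypotheses of a standard Cuntz--Pimsner simplicity criterion (e.g.\ Schweizer--Katsura): $X$ is full, non-degenerate, with injective left action, and $A=C(K)$ contains no non-trivial $X$-invariant closed ideal. Fullness and injectivity of $\phi$ are immediate from surjectivity of $h$. For the absence of invariant ideals, I would show that a closed $F\subset K$ satisfying $h(F)\subset F$ and $F=\bigcup_i \gamma_i(F)$ must be either $\emptyset$ or $K$, since such an $F$ is itself a non-empty compact self-similar set for $\gamma$ and hence coincides with $K$ by uniqueness. The required aperiodicity follows from the contractive dynamics together with finiteness of $P_\gamma$ (arbitrarily long words give $\gamma_\mu(K)$ of arbitrarily small diameter, so no non-trivial periodicity can survive). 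Pure infiniteness is then standard: for any non-zero positive element, a suitable word $\mu$ concentrating inside its support produces an infinite projection in the associated hereditary subalgebra.

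For $J_X$, the plan is based on a concrete description of $\K(X)$. Identifying $\C$ with the graph of $h$ via $x\mapsto(x,h(x))$ realizes $X=C(\C)$ as $C(K)$ through $\tilde f(x)=f(x,h(x))$, under which $\phi(a)$ acts as pointwise multiplication by $a$ and the inner product becomes $(\tilde f\,|\,\tilde g)_A(y)=\sum_{j=1}^N \overline{\tilde f(\gamma_j(y))}\,\tilde g(\gamma_j(y))$, which weights each distinct point of a fibre by its branch multiplicity $e_\gamma$. This lets me realize $\K(X)$ as an algebra of continuous kernels on the fibred product
\[
R_h \;=\; \{(x,x')\in K\times K \mid h(x)=h(x')\},
\]
with the rank-one operator $\theta_{\tilde u,\tilde v}$ corresponding to the kernel $(x,x')\mapsto \tilde u(x)\overline{\tilde v(x')}$ and a kernel $T\in C(R_h)$ acting on $X$ by $(T\tilde w)(x)=\sum_{x'\in h^{-1}(h(x))} e_\gamma(x',h(x))\,T(x,x')\,\tilde w(x')$. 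If $\phi(a)$ lies in $\K(X)$, testing against continuous functions separating the distinct points in each fibre forces its kernel to be $T_a(x,x')=\tfrac{a(x)}{e_\gamma(x,h(x))}\,\delta_{x,x'}$ on $R_h$. Now $R_h$ decomposes as the union of the diagonal and the off-diagonal strands $\{(\gamma_i(y),\gamma_j(y)) : i\ne j\}$, and these pieces meet precisely at points $(b,b)$ with $b\in B_\gamma$. Continuity of $T_a$ at such a $(b,b)$, approached from off-diagonal along $(\gamma_i(y_n),\gamma_j(y_n))\to(b,b)$ with $y_n\to h(b)$ and $y_n\ne h(b)$, forces $a(b)=0$, giving $J_X\subset \{a : a|_{B_\gamma}=0\}$. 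For the reverse inclusion, given $a$ vanishing on $B_\gamma$ I would use a partition of unity $\{\rho_j\}_{j=1}^N$ subordinate to a cover $\{U_j\supset \gamma_j(K)\}$ whose pairwise overlaps are concentrated on $B_\gamma$ (available by finiteness of $B_\gamma$ and the open set condition) to verify directly the identity $\phi(a)=\sum_{j=1}^N \theta_{a\rho_j^{1/2},\,\rho_j^{1/2}}$, placing $\phi(a)$ in $\K(X)$.

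The step I expect to be the main obstacle is the kernel identification itself: tracking the branch multiplicities $e_\gamma$ correctly in both the rank-one kernel formula and in the formula for how a kernel acts on $X$ is where the combinatorics of $h$ really enters, and the delicate continuity check at $(b,b)$---where the diagonal and off-diagonal strands of $R_h$ collide---is exactly the mechanism that forces $B_\gamma$ to appear in the description of $J_X$.
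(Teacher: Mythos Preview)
The paper does not prove this lemma; it is quoted from \cite{KW1} without argument, so there is no in-paper proof to compare against. Your sketch for simplicity and pure infiniteness is along standard lines; the substantive part of your proposal is the description of $J_X$, and that is where there is a gap.

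Your kernel picture for the inclusion $J_X\subset\{a:a|_{B_\gamma}=0\}$ is correct: realizing $\K(X)$ inside $C(R_h)$, continuity of the kernel of $\phi(a)$ at a pinch point $(b,b)$---approached along an off-diagonal strand $(\gamma_i(y_n),\gamma_j(y_n))$---forces $a(b)=0$. The problem is the reverse inclusion. The asserted identity $\phi(a)=\sum_j\theta_{a\rho_j^{1/2},\rho_j^{1/2}}$ fails: at an off-diagonal $(x,x')\in R_h$ the kernel of the right-hand side equals $a(x)\sum_j\rho_j(x)^{1/2}\rho_j(x')^{1/2}$, and since the overlaps $U_i\cap U_j$ are open neighbourhoods of branch points (never $B_\gamma$ itself), there are off-diagonal pairs with both coordinates in a common $U_j$ and $a(x)\ne 0$. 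For the tent map with $U_1=[0,\tfrac12+\epsilon)$, $U_2=(\tfrac12-\epsilon,1]$, the fibre pair $\{\tfrac12-\delta,\tfrac12+\delta\}$ with $0<\delta<\epsilon$ already exhibits this. The clean repair is structural: $J_X=\phi^{-1}(\K(X))$ is a closed ideal of $C(K)$, hence of the form $\{a:a|_Z=0\}$ for some closed $Z$, and your first step gives $Z\supset B_\gamma$. For $x_0\notin B_\gamma$, the observation $\gamma_i(K)\cap\gamma_j(K)\subset B_\gamma$ for $i\ne j$ yields a neighbourhood $V\ni x_0$ meeting only one $\gamma_{j_0}(K)$; choosing bump functions $a,v\in C(K)$ with $\supp(a)\subset\{v=1\}\subset\supp(v)\subset V$, one verifies directly that $\phi(a)=\theta_{a,v}\in\K(X)$, whence $x_0\notin Z$. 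Alternatively, keep your partition-of-unity operators but shrink the overlaps to $B_\gamma$ and pass to a norm limit, using $a|_{B_\gamma}=0$ to control the off-diagonal error.
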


\par
Let $X^{\otimes n}$ be the $n$-times inner tensor product of $X$ and
$\phi_n$ denotes the left module action of $A$ on $X^{\otimes n}$.
Put 
$$
\F^{(n)} = A \otimes I + \K(X)\otimes I + \K(X^{\otimes 2})\otimes I 
 \cdots + \K(X^{\otimes n}) \subset \L(X^{\otimes n})
$$
We embedd $\F^{(n)}$ into $\F^{(n+1)}$ by $T \mapsto T\otimes I$ for 
$T \in \F^{(n)}$. Put
$\F^{(\infty)} = \overline{\bigcup_{n=0}^{\infty}\F^{(n)}}$.

By Pimsner \cite{Pi} we can identify $\F^{(n)}$ with the  
$C^*$-subalgebra of  $\O_{\gamma}$ generated by $A$ and $S_xS_y^*$ 
for $x,y \in X^{\otimes k}$, $k=1,\dots, n$ 
under identifying $S_xS_y^*$  with 
$\theta_{x,y}$ .  
Then the inductive limit algebra $\F^{(\infty)}$ is exactly the fixed point subalgebra of $\O_{\gamma}$
under the gauge
action.  We call $\F^{(\infty)}$ the core of $\O_X$.
We note that $\F^{(n+1)} = \F^{(n)} \otimes I + \K(X^{\otimes n+1})$. 
Thus  $\F^{(n)}$ is
a $C^*$-subalgebra  of $\F^{(n+1)}$ containing unit and $\K(X^{\otimes
n+1})$ is an ideal of $\F^{(n+1)}$. We sometimes write 
$\F^{(n+1)} = \F^{(n)} + \K(X^{\otimes n+1})$ for short.

\section
{The Rieffel correspondence of traces}
In this section, we study the Rieffel correspondence of traces 
on Morita equivalent $C^*$-algebras $A$ and $B$. If $C^*$-algebras 
are not unital, then there does not exist a bijective correspondence 
between the {\it finite} traces on Morita equivalent 
$C^*$-algebras $A$ and $B$ in general. For example, consider 
$A = {\mathbb C}\oplus {\mathbb C}$ and 
$B  = {\mathbb C}\oplus K(H)$. We consider the case that
a Hilbert module is of finite degree type and show the following: 
If $C^*$-algebras 
$A$ and $B$ are Morita equivalent by 
an equivalence bimodule 
of finite degree type for both sides, then 
there exists a bijective correspondence 
between the {\it finite} traces on $A$ and $B$.
In general,  
if $C^*$-algebras 
$A$ and $B$ are Morita equivalent,  then 
there exists a bijective correspondence 
between the densely defined lower semi-continuous 
traces on $A$ and $B$ written by countable bases, 
see, for example,   N. Nawata \cite{Na}.

Since the correspondence can be explicitely written  using  basis, 
we recall the notion of basis for  a Hilbert $C^*$-module,  
see \cite{KPW1} and \cite{K}. 
Let $X = X_A$ be a (right) Hilbert $C^*$-module $X$ over a $C^*$-algebra $A$. 
A family $({u_i})_{i \in I}$ in $X$ is called a (right) basis,  
(or a tight frame more precisely as in \cite{FL}) of $X$ 
if 
$$
x = \sum _{i \in I} u_i(u_i|x)_A  \text{ for any } x \in X, 
$$ 
where the sum is taken as unconditional norm convergence. 
Furthermore  $({u_i})_{i \in I}$ is called a finite basis if 
$(u_i)_{i \in I}$ is  a finite set. 
If a Hilbert $C^*$-module is countably generated, 
then there  exists a countable basis (that is, 
finite or a countably infinite basis ) of $X$ and written  
as $\{u_i\}_{i=1}^{\infty}$, where some $u_i$ may be zero. 
Let $\{u_i\}_{i=1}^{\infty}$ be a countable basis of 
 Hilbert right $B$-module $X$ and
$\{v_j\}_{j=1}^{\infty}$ a countable basis of Hilbert right 
$C$-module $Y$. Assume that there exists a $*$-homomorphisms
$\phi : B \rightarrow L(Y)$. 
Then $\{u_i\otimes v_j\}_{i=1,2,\dots,j=1,2,\dots}$ constitutes a basis
of $X \otimes_B Y$ for any
ordering (\cite{KPW1} and \cite{KW2}).

Let $A$ be a $C^*$-algebra.  
We denote by $\T(A)$ the set of tracial states
on $A$.  We assume that $\T(A)$ is not empty.
Let $X$ be a countably generated
Hilbert $A$-module and $\{u_i\}_{i=1}^{\infty}$ a basis of $X$.
For a tracial state $\tau$ on $A$, $\sup_n \sum_{i=1}^n
\tau((u_i|u_i)_A) \in [0,\infty]$ does not depend on the choice of basis
$\{u_i\}_{i=1}^{\infty}$ as in \cite{KW2}, see also 
the proof of Proposition \ref{prop:traces} of this paper. 
We put $d_{\tau} = \sup_n \sum_{i=1}^n
\tau((u_i|u_i)_A)$.  
We call that $X$ is of {\it finite degree type} if 
$\sup_{\tau \in \T(A)}d_{\tau}< \infty$ 
(\cite {KPW1} and \cite{KW2}). 
For example, 
let $X$ be the Hilbert bimodule  associated with a self-similar map 
${\gamma}= (\gamma_1,\dots,\gamma_N)$ of $N$ proper contractions. 
Then $X_A$ is of finite degree type and 
$\sup_{\tau \in \T(A)}d_{\tau} = N$. 

In the papr we assume that a Hibert $C^*$-module is countably
generated. Let $A$ and $B$ be $C^*$-algebras. 
We say that 
an $B-A$ equivalence bimodule $X = {}_BX_A$ is 
of finite degree type for both sides if 
 $\T(A)$ and $\T(B)$ are not empty  and
$X_A$ and ${}_BX$ are of finite degree type, where 
$X_A$ and ${}_BX$ are Hilbert $C^*$-modules 
considered as a right
$A$-module and a left $B$-module respectively.

\begin{prop} \label{prop:traces}
If $C^*$-algebras 
$A$ and $B$ are Morita equivalent by 
an equivalence bimodule $X$ 
of finite degree type for both sides, then 
there exists a bijective correspondence 
between the {\it finite} traces on $A$ and $B$. 
Let $\{u_i\}_{i=1}^{\infty}$ and $\{w_j\}_{j=1}^{\infty}$ be
countable bases of $X_A$ and ${}_BX$ respectively. 
The correspondence is given explicitely as follows: \\
For a finite trace $\tau$ on $A$, we can define a finite trace
$\tilde{\pi}(\tau)$ on $B$  by
\[
 \tilde{\pi}(\tau)(b) = \sum_{i=1}^{\infty}\tau((u_i|bu_i)_A).
\]
For a finite trace $\sigma$ on $B$, we can define a
finite trace $\tilde{\psi}(\sigma)$ on $A$ by
\[
 \tilde{\psi}(\sigma)(a) = \sum_{j=1}^{\infty} \sigma({}_B(w_j a|w_j)).
\]
The correspondence  does not depend on the choice of bases. 
The correspondence preserves the order of traces.
Moreover we have
\[
  \tilde{\pi}(\tau)({}_B(x|y)) = \tau((y|x)_A), \ \ 
  \tilde{\psi}(\sigma)((z|w)_A) = \sigma({}_B(w|z)). 
\]
\end{prop}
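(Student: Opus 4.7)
The plan is to (a) verify that $\tilde\pi(\tau)$ is a well-defined finite trace on $B$ (with the symmetric claim for $\tilde\psi(\sigma)$), (b) establish the two compatibility identities in the last display, and then (c) deduce basis-independence, mutual inverse-ness, and order-preservation from these. Order-preservation is immediate from the observation that every summand $\tau((u_i|bu_i)_A)$ is non-negative when $b\ge 0$ and $\tau\ge 0$.

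For step (a) I would first settle convergence of the defining series. Since $B$ acts on $X_A$ by adjointable operators, every $b\in B^+$ satisfies $b\le \|b\|\,I$ in $\L(X_A)$ and hence $(u_i|bu_i)_A\le \|b\|(u_i|u_i)_A$, so
\[
\sum_{i=1}^\infty \tau\bigl((u_i|bu_i)_A\bigr)\;\le\;\|b\|\,d_\tau\;<\;\infty;
\]
decomposition into real/imaginary and positive/negative parts handles general $b\in B$, yielding a bounded positive linear functional. For the trace property, I would resolve $cu_i=\sum_j u_j(u_j|cu_i)_A$, take the $A$-inner product with $b^*u_i$, apply $\tau$, use its tracial symmetry, interchange the order of summation, and reassemble via the Parseval-type identity $\sum_i(y|u_i)_A(u_i|z)_A=(y|z)_A$ to reach $\tilde\pi(\tau)(cb)$. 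Basis-independence is the same expansion applied to a change-of-basis formula.

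For step (b), the identity $\tilde\pi(\tau)({}_B(x|y))=\tau((y|x)_A)$ is a short computation from the imprimitivity relation ${}_B(x|y)u_i=x(y|u_i)_A$, tracial symmetry, and one Parseval; the symmetric argument handles $\tilde\psi$. Mutual inverse-ness is then a density argument: by the two compatibility identities,
\[
\tilde\psi\bigl(\tilde\pi(\tau)\bigr)\bigl((z|w)_A\bigr)\;=\;\tilde\pi(\tau)\bigl({}_B(w|z)\bigr)\;=\;\tau\bigl((z|w)_A\bigr),
\]
and since $X$ is full as a right Hilbert $A$-module the linear span of $\{(z|w)_A:z,w\in X\}$ is norm-dense in $A$, so $\tilde\psi\circ\tilde\pi=\mathrm{id}$ by continuity; $\tilde\pi\circ\tilde\psi=\mathrm{id}$ is symmetric.

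The main technical obstacle I expect is the Fubini swap in the trace-property argument. Absolute convergence of the double sum
\[
\sum_{i,j}\bigl|\tau\bigl((u_i|bu_j)_A(u_j|cu_i)_A\bigr)\bigr|
\]
should be extracted by applying the tracial Cauchy--Schwarz inequality to the bilinear form $(\alpha,\beta)\mapsto\tau(\alpha^*\beta)$; each of the two resulting factors collapses via a single Parseval to an expression bounded by $\|b\|^2 d_\tau$ and $\|c\|^2 d_\tau$ respectively. This is precisely the step at which the finite-degree-type hypothesis on $X_A$ is indispensable, and the symmetric hypothesis on ${}_BX$ plays the analogous role in making $\tilde\psi$ well defined on all of $A$, which is what upgrades the construction from a map merely on a dense ideal to a bijection between the full finite-trace cones.
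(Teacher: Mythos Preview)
Your proposal is correct and follows essentially the same line as the paper's proof: both establish finiteness via the bound $\|b\|\,d_\tau$, prove the trace property and basis-independence by a Parseval expansion combined with the tracial symmetry of $\tau$, derive the compatibility identity $\tilde\pi(\tau)({}_B(x|y))=\tau((y|x)_A)$ from the imprimitivity relation, and conclude bijectivity from fullness. The only organizational difference is that the paper packages the trace property and basis-independence into a single computation showing $\sum_i\tau((u_i|T^*Tu_i)_A)=\sum_j\tau((v_j|TT^*v_j)_A)$ for a second basis $\{v_j\}$ (then taking $T=S^{1/2}$ and polarizing), whereas you separate the two arguments and are more explicit about justifying the Fubini swap via tracial Cauchy--Schwarz, a step the paper carries out formally.
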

\begin{proof} Firstly, we consider the case that $B = K(X_A)$. 
Since $X$ is of finite degree type, for $T \in \K(X)_+$, we have
\[
 \sum_{i=1}^{\infty} \tau((u_i|Tu_i)_A) \le \|T\| d_{\tau}.
\]
For general $T \in \K(X)$, we have 
 $\sum_{i=1}^{\infty}
 |\tau((u_i|Tu_i)_A)| \le 4 d_{\tau}\|T\| < \infty$. Hence 
$\tilde{\pi}(\tau)$ is well-defined.  

Let $\{v_j\}_{j=1}^{\infty}$ be another basis of  $X_A$. 
Then for $T \in \K(X)$, 
we have 
\begin{align*}
 & \sum_{i=1}^{\infty}\tau((u_i|T^*Tu_i)_A) 
  = \sum_{i=1}^{\infty}\tau((Tu_i|Tu_i)_A) 
  = \sum_{i=1}^{\infty}\tau\left( (
\sum_{j=1}^{\infty}v_j(v_j|Tu_i)_A | Tu_i)_A \right) \\ 
 &=   \sum_{i=1}^{\infty} \tau\left(
 \sum_{j=1}^{\infty}(v_j|Tu_i)_A^* (v_j|Tu_i)_A \right) 
  =  \sum_{i=1}^{\infty}  \sum_{j=1}^{\infty}
\tau\left( (v_j|Tu_i)_A^*(v_j|Tu_i)_A \right) \\
& =   \sum_{j=1}^{\infty}  \sum_{i=1}^{\infty}
\tau\left( (v_j|Tu_i)_A^*(v_j|Tu_i)_A \right)  
   =  \sum_{j=1}^{\infty}  \sum_{i=1}^{\infty}
\tau\left( (v_j|Tu_i)_A (v_j|Tu_i)_A^*\right) \\
 & =   \sum_{j=1}^{\infty}\sum_{i=1}^{\infty}
 \tau((T^{*}v_j|u_i(u_i|T^{*}v_j)_A))  
= \sum_{j=1}^{\infty} \tau((T^{*}v_j|\sum_{i=1}^{\infty}
  u_i(u_i|T^{*}v_j)_A)_A)  \\
 &=  \sum_{j=1}^{\infty} \tau((T^{*}v_j|T^{*}v_j)_A) 
  = \sum_{j=1}^{\infty} \tau((v_j | TT^*v_j)_A).
\end{align*}
This shows that for $S \in {\K(X)}_+$
\[
 \sum_{i=1}^{\infty} \tau((u_i|Su_i)_A)
\]
does not depend on the choice of the basis $\{u_i\}_{i=1}^{\infty}$
by taking $T=S^{1/2}$.
Moreover, the same calculation shows that
$T \mapsto \sum_{i=1}^{\infty}\tau((u_i|Tu_i)_A)$
gives a finite trace on $\K(X_A)$ 
by the polarization identity.
Secondly we consider the general case. 
Since there exists an isomorphism of $B$ onto $\K(X_A)$, 
we can define a finite trace
$\tilde{\pi}(\tau)$ on $B$  by
\[
\tilde{\pi}(\tau)(b) = \sum_{i=1}^{\infty}\tau((u_i|bu_i)_A).
\]
Similary, for a finite trace $\sigma$ on $B$, we can define a
finite trace $\tilde{\psi}(\sigma)$ on $A$ by
\[
 \tilde{\psi}(\sigma)(a) = \sum_{j=1}^{\infty} \sigma({}_B(w_j a|w_j)).
\]
Since ${}_B(x|y)z = x(y|z)_A = \theta_{x,y}(z)$, 
\begin{align*}
 \sum_{i=1}^{\infty}\tau((u_i|{}_B(x|y)u_i)_A)
& = \sum_{i=1}^{\infty}\tau(u_i|x(y|u_i)_A)_A)
= \sum_{i=1}^{\infty}\tau((u_i|x)_A(y|u_i)_A) \\
& =  \sum_{i=1}^{\infty}\tau((y|u_i)_A(u_i|x)_A)
= \tau(y| \sum_{i=1}^{\infty} u_i(u_i|x)_A) 
= \tau((y|x)_A), 
\end{align*}
we have 
\[
  \tilde{\pi}(\tau)({}_B(x|y)) = \tau((y|x)_A).
\]
Similarly we have 
\[
  \tilde{\psi}(\sigma)((z|w)_A) = \sigma({}_B(w|z)). 
\]
Since the Hilbert bi-module is full on both sides,  
the correspondence is bijective. By definition, 
the correspondence preserves the order of traces. 

\end{proof}

\section
{Analysis for point mass}

Let $\gamma$ be a self-similar map on $K$ which satisfies the
assumption A.
We denote by $B(K)$ the set of bounded Borel functions on $K$,
and by $M(K)$ the set of finite Borel measures on $K$.
For a Borel function $a$ on $K$, we define a Borel function $\tilde{a}$
by
\[
\tilde{a}(y) =
\sum_{j=1}^N\frac{1}{e_{\gamma}(\gamma_j(y),y)}a(\gamma_j(y)) 
= \sum_{x \in h^{-1}(y)} a(x) . 
\]
Even if $a$ is continuous, $\tilde{a}$ is not continuous in general, 
because we do not count the multiplicity in $\{x \in h^{-1}(y)\}$.  

We usually identify a finite Borel measure $\mu$ with the associated 
 positive linear functional on $C(K)$. So, we may write as 
$\mu (a) := \int a  d \mu$ for $a \in C(K)$ for short. For a 
Borel set $B \subset K$, we may write $\mu(B) = \mu (\chi_B)$ if 
no confusion arize.

For a finite Borel measure $\mu$ on $K$, we define a finite Borel measure
$F(\mu)$ on $K$ by 
\[
F(\mu)(a) = \mu(\tilde{a})\ \ \ ( a \in {\rm C}(K)). 
\]
In particular, we have $F(\delta_y) = \sum_{x \in h^{-1}(y)}\delta_x$, 
where $\delta_y$ is the Dirac measure on $y$ and 
the sum on $x$  should be  taken without multiplicity. 
In fact 
$$  
F(\delta_y)(a) = \delta_y (\tilde{a})= \tilde{a}(y)
= \sum_{j=1}^N\frac{1}{e_{\gamma}(\gamma_j(y),y)}a(\gamma_j(y))
= \sum_{x \in h^{-1}(y)}\delta_x (a). 
$$

\begin{lemma} \label{lem:lem6}
Let $X$ be a $C^*$-correspondence
associated with a self-similar map $\gamma$ on K. 
Let $\{u_{i}\}_{i=1}^{\infty}$ be a countable basis of $X$.
Then for $a \in A$ and $y \in K$, we have
\[
 \sum_{i=1}^{\infty}(u_i|\phi(a)u_i)_A(y) = \tilde{a}(y) .
\]
\end{lemma}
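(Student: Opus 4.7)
The plan is to unpack the inner product from its definition on $X = C(\C)$, interchange the two sums, and reduce the statement to the single pointwise identity
\[
\sum_{i=1}^{\infty} |u_i(\gamma_j(y),y)|^2 = \frac{1}{e_{\gamma}(\gamma_j(y),y)}
\]
for each $y \in K$ and each $j = 1,\dots,N$. Once this is in hand, multiplying by $a(\gamma_j(y))$ and summing over $j$ recovers $\tilde{a}(y)$ directly from its definition.

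Using the explicit formulas for $\phi$ and $(\cdot|\cdot)_A$, the term $(u_i | \phi(a) u_i)_A(y)$ simplifies to $\sum_{j=1}^N a(\gamma_j(y)) |u_i(\gamma_j(y),y)|^2$. I would then exchange $\sum_i$ with the finite sum $\sum_{j=1}^N$; after splitting $a$ into real/imaginary and positive/negative parts the exchange is legitimate because the resulting nonnegative double series is dominated by $\|a\|_\infty \sum_i (u_i|u_i)_A(y)$, and the latter is bounded by $N$ in view of the finite degree type of $X$.

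The heart of the matter is the key identity, which I would extract from the basis reconstruction $x = \sum_i u_i (u_i|x)_A$ valid for every $x \in X$. Evaluating at $p_0 := (\gamma_j(y),y) \in \C$ and expanding the inner product yields
\[
x(\gamma_j(y),y) = \sum_{l=1}^N K_{jl}(y)\, x(\gamma_l(y),y), \qquad K_{jl}(y) := \sum_{i=1}^{\infty} u_i(\gamma_j(y),y)\, \overline{u_i(\gamma_l(y),y)}.
\]
The essential combinatorial observation is that when $\gamma_l(y) = \gamma_j(y)$ the pair $(\gamma_l(y),y)$ coincides with $p_0$ as a point of $\C$, so $K_{jl}(y) = K_{jj}(y)$, and there are exactly $e_{\gamma}(\gamma_j(y),y)$ such indices $l$. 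Using Urysohn's lemma to choose $x \in C(\C)$ equal to $1$ at $p_0$ and vanishing at every other distinct point of $h^{-1}(y) \times \{y\} \subset \C$, the displayed identity collapses to $1 = e_{\gamma}(p_0)\, K_{jj}(y)$, which is precisely the desired equality.

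The main obstacle is the bookkeeping of multiplicities at branch points: the index set $\{1,\dots,N\}$ may label the same geometric point of $\C$ several times, and one must carefully distinguish the raw sum over $l$ from the sum over the finite set $h^{-1}(y)$. Everything else is a routine assembly of the basis reconstruction formula with the defining formulas for $\phi$ and the $A$-valued inner product.
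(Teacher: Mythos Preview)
Your proposal is correct and follows essentially the same route as the paper's proof: both extract the key pointwise identity $\sum_i |u_i(\gamma_j(y),y)|^2 = 1/e_\gamma(\gamma_j(y),y)$ by evaluating the basis reconstruction $x = \sum_i u_i(u_i|x)_A$ at $(\gamma_j(y),y)$ against a test function separating the distinct preimages, and then assemble the result. One minor caution: you justify the interchange of sums via the finite degree type of $X$, but in the paper that property (Lemma~\ref{lem:full}) is \emph{derived from} the present lemma; to avoid circularity either note that the needed pointwise bound $\sum_i(u_i|u_i)_A(y)\le N$ follows immediately from your key identity once it is established, or argue as the paper does that $|f(p)|\le\|f\|_2$ on $C(\C)$, so norm convergence of the basis expansion already gives pointwise convergence.
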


\begin{proof} For $f \in X = {\rm C}(\C)$, we have 
$|f(x,y)| \leq \|f\|_2$ 
and the identity $\sum_{i=1}^{\infty}u_i(u_i|f)_A = f$ 
converges in norm $\|\ \|_2$. Hence the left side  
converges also pointwisely.
For each fixed $y \in K $, $k= 1,\dots, N$, we consider the 
value of $\sum_{i=1}^{\infty}u_i(u_i|f)_A = f$  at $(x,y) = (\gamma_k(y),y)$: 
\begin{align*}
 \lim_{n \to \infty} \left( \sum_{i=1}^{n}u_i(\gamma_k(y),y)(u_i|f)_A(y)
\right)
= & \lim_{n \to \infty} \left( \sum_{i=1}^{n}u_i(\gamma_k(y),y)
 \sum_{j=1}^N \overline{u_i(\gamma_j(y),y)}f(\gamma_j(y),y) \right)
  \\
= & f(\gamma_k(y),y). 
\end{align*}
For $j$ with $\gamma_j(y) \not= \gamma_k(y)$, 
we take $f \in X$ such that $f(\gamma_k(y),y)=1$ and
 $f(\gamma_j(y),y)=0$.  Then we have
\begin{align*}
 \lim_{n \to \infty} \left( \sum_{i=1}^{n}u_i(\gamma_k(y),y)
 \sum_{j=1}^N \overline{u_i(\gamma_j(y),y)}f(\gamma_j(y),y) \right)
=  &\lim_{n \to \infty}\left( \sum_{i=1}^{n} \overline{u_i(\gamma_k(y),y)}
e_{\gamma}(\gamma_k(y),y) u_i(\gamma_k(y),y) \right) \\
=& \sum_{i=1}^{\infty}e_{\gamma}(\gamma_k(y),y)|u_i(\gamma_k(y),y)|^2=1.
\end{align*}
Recall that the branch index at $(\gamma_j(y),y)$ is given by 
$e_{\gamma}(\gamma_j(y),y) = \verb!#!\{i \in
\Sigma |
\gamma_j(y)=\gamma_i(y)\}$ and 
and that $e_{\gamma}(\gamma_j(y),y)$ express the multiplicity.
For $a \in A$, we have
\begin{align*}
 & \sum_{i=1}^{\infty}(u_i|\phi_n(a)u_i)_A(y)
 = \sum_{i=1}^{\infty} \sum_{j=1}^N \overline{u_i(\gamma_j(y),y)}
   a(\gamma_j(y))u_i(\gamma_j(y),y)  \\
 & =  \sum_{i=1}^{\infty} \sum_{j=1}^N
a(\gamma_j(y))|u_i(\gamma_j(y),y)|^2  \\
& =  \sum_{j=1}^N \left(
 \frac{1}{e_{\gamma}(\gamma_j(y),y)}
a(\gamma_j(y))\sum_{i=1}^{\infty}
   e_{\gamma}(\gamma_j(y),y)|u_i(\gamma_j(y),y)|^2 \right)  \\
& =   \sum_{j=1}^N   \frac{1}{e_{\gamma}(\gamma_j(y),y)}
    a(\gamma_j(y)) 
 =  \tilde{a}(y).
\end{align*}
\end{proof}

We shall rephrase the lemma above as follows: 
 
\begin{cor} Let $X$ be the $C^*$-correspondence
associated with a self-similar map $\gamma$ on K. 
Let $\{u_{i}\}_{i=1}^{\infty}$ be a countable basis of $X$.
Then we have
\[
   \sum_{i=1}^{\infty}\mu((u_i|\phi(a)u_i)_A) = F(\mu)(a).
\]
Moreover let $X^{\otimes n}$ be $n$-th tensor product of $X$ and 
$\{v_{j}\}_{j=1}^{\infty}$ be a countable basis of $X^{\otimes n}$. 
Then we have
\[
   \sum_{j=1}^{\infty}\mu((v_j|\phi(a)v_j)_A) = F^n(\mu)(a).
\]

\end{cor}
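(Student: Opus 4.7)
The first equation should follow immediately from \lemref{lem:lem6}, which establishes the pointwise identity $\sum_{i=1}^{\infty}(u_i|\phi(a)u_i)_A(y) = \tilde{a}(y)$. Integrating against $\mu$ and invoking monotone convergence (after reducing to $a \ge 0$, for which each summand $(u_i|\phi(a)u_i)_A(y) = \sum_{j=1}^N a(\gamma_j(y))|u_i(\gamma_j(y),y)|^2$ is a nonnegative continuous function, so the partial sums increase pointwise) yields $\sum_i \mu((u_i|\phi(a)u_i)_A) = \mu(\tilde{a}) = F(\mu)(a)$. The extension to arbitrary $a \in A$ is by decomposing into positive/negative real and imaginary parts.

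The second equation should be proved by induction on $n$, with the base case being the first equation. For the inductive step, use the fact recalled just before \propref{prop:traces} that if $\{u_i\}$ is a basis of $X_A$ and $\{v_j\}$ a basis of $X^{\otimes n}_A$, then $\{u_i \otimes v_j\}$ is a basis of $X^{\otimes(n+1)}_A$. The computation in the proof of \propref{prop:traces} applied to $T = \phi_{n+1}(a^{1/2})$ shows that the sum $\sum_k \mu((w_k|\phi_{n+1}(a)w_k)_A)$ is basis-independent, so one may use this product basis. The standard balanced-tensor identities $\phi_{n+1}(a)(u_i \otimes v_j) = \phi(a)u_i \otimes v_j$ and $(x_1 \otimes y_1 | x_2 \otimes y_2)_A = (y_1 | \phi_n((x_1|x_2)_A)y_2)_A$ then reduce a typical summand to $\mu((v_j | \phi_n((u_i|\phi(a)u_i)_A)v_j)_A)$.

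For $a \ge 0$ all terms are nonnegative, so Tonelli permits swapping the order of summation. The inner sum over $j$ equals $F^n(\mu)((u_i|\phi(a)u_i)_A)$ by the inductive hypothesis applied with the scalar $(u_i|\phi(a)u_i)_A \in A$, and the outer sum over $i$ equals $F(F^n(\mu))(a) = F^{n+1}(\mu)(a)$ by the base case applied to the finite Borel measure $F^n(\mu)$ in place of $\mu$. The general case follows by linearity. The only (minor) technical obstacles are the interchanges of limits, all resolved by Tonelli once one reduces to $a \ge 0$, together with the basis-independence, which is essentially already contained in the proof of \propref{prop:traces}; no deeper difficulty is expected.
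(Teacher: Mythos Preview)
Your argument is correct. The first equation is handled exactly as in the paper (integrate \lemref{lem:lem6}); your use of monotone convergence for $a\ge 0$ is the right way to justify the interchange, since $\tilde a$ need not be continuous.

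For the second equation you take a genuinely different route. The paper does not induct on $n$; it simply invokes the fact (from \cite{KW1}) that $X^{\otimes n}_A$ is itself the $C^*$-correspondence attached to the self-similar map $\gamma^n=(\gamma_{i_1}\circ\cdots\circ\gamma_{i_n})_{(i_1,\dots,i_n)\in\Sigma^n}$, so \lemref{lem:lem6} applies verbatim with $\gamma$ replaced by $\gamma^n$ and $F$ by $F^n$. This is a one-line reduction, but it relies on the identification of $X^{\otimes n}$ with $C(\C_{\gamma^n})$ established elsewhere. Your inductive argument via the product basis $\{u_i\otimes v_j\}$, the balanced-tensor inner product formula, and Tonelli is longer but entirely self-contained and avoids that external identification; it also makes the basis-independence explicit rather than implicit. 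Either approach is fine.
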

\begin{proof}It is enough to recall that 
$X^{\otimes n}_A$ is
the Hilbert $C^*$-module associated with a self similar map
$\gamma^n=(\gamma_{i_1}\circ \dots \circ
\gamma_{i_n})_{(i_1,\dots,i_n) \in \Sigma^n}$ on $K$ as in 
\cite{KW1}. 
\end{proof}

The identity above is proved for a certain concrete 
basis in Kajiwara \cite{K} by a direct computation. 

Let $X$ be the $C^*$-correspondence
associated with a self-similar map $\gamma$ and 
$n$ an integer with $n \ge 1$.
We define $y_0^n \in X^{\otimes n}$ by
\[
y_0^n := (1/\sqrt{\,N})^{n}1_{\C}\otimes 1_{\C} \otimes \cdots
\otimes 1_{\C} \in X^{\otimes n}.
\]

Since  $(y_0^n|y_0^n)_A = I$, for $z \in X^{\otimes n}$ 
$$
{}_{K(X_A)}(z|y_0^n)y_0^n = \theta_{z,y_0^n}(y_0^n) = z(y_0^n|y_0^n)_A = z.
$$
This shows that 
a singleton set $\{y_0\}$  
is a finite basis for  the left $K(X_A)$-module ${}_{K(X_A)} X^{\otimes n}$. 

\begin{lemma} \label{lem:full} Let $X$ be the $C^*$-correspondence
associated with a self-similar map $\gamma$ and $n$ an integer with $n \ge 1$.
Then Hilbert $C^*$-modules ${X^{\otimes n}}_A$ and 
${}_{\K(X^{\otimes n})}X^{\otimes n}$ are of finite degree type and full.
\end{lemma}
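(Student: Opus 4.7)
The plan is to verify the four claims separately: fullness and finite degree type for both $X^{\otimes n}_A$ and ${}_{\K(X^{\otimes n})}X^{\otimes n}$. None of the four pieces is deep; the lemma is a repackaging of material already in place, so I would organize the argument to point to exactly the right earlier fact for each piece.

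For the right module $X^{\otimes n}_A$, I would start from the identification (recorded just above in the corollary) that $X^{\otimes n}_A$ is precisely the Hilbert $C^*$-module associated with the self-similar map $\gamma^n = (\gamma_{i_1}\circ\cdots\circ\gamma_{i_n})_{(i_1,\dots,i_n)\in\Sigma^n}$, which is a family of $N^n$ proper contractions on $K$. The example given just after \propref{prop:traces} then applies to $\gamma^n$ verbatim and gives $\sup_{\tau\in\T(A)}d_\tau = N^n<\infty$, so $X^{\otimes n}_A$ is of finite degree type. For fullness, I would simply quote the identity $(y_0^n|y_0^n)_A=I$ noted immediately before the lemma: the closed two-sided ideal $\overline{\text{span}}\{(x|y)_A : x,y\in X^{\otimes n}\}$ of $A$ then contains the unit of $A=C(K)$, so it must be all of $A$.

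For the left module ${}_{\K(X^{\otimes n})}X^{\otimes n}$, fullness is automatic: by the very definition of $\K(X^{\otimes n})$ as the norm closure of the linear span of the rank-one operators $\theta_{x,y}={}_{\K(X^{\otimes n})}(x|y)$, the set of left inner products spans a norm-dense subalgebra. For finite degree type, I would use the observation made right before the lemma that the singleton $\{y_0^n\}$ already constitutes a (finite) basis of ${}_{\K(X^{\otimes n})}X^{\otimes n}$, which relies on $(y_0^n|y_0^n)_A=I$. The same identity gives that $p:={}_{\K(X^{\otimes n})}(y_0^n|y_0^n)=\theta_{y_0^n,y_0^n}$ is a projection, via the one-line computation
\[
p^2(z)=y_0^n\,(y_0^n|y_0^n)_A\,(y_0^n|z)_A = y_0^n(y_0^n|z)_A = p(z).
\]
Consequently, for any $\sigma\in\T(\K(X^{\otimes n}))$ the basis-sum in the definition of $d_\sigma$ reduces to the single term $\sigma(p)\le\|p\|=1$, so $\sup_\sigma d_\sigma\le 1<\infty$.

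I do not expect any real obstacle; the substantive content (the value $\sup_\tau d_\tau=N$ for the base case, the normalization $(y_0^n|y_0^n)_A=I$, and the defining property of $\K$) is already at hand, and the only thing to be careful about is to phrase the fullness of the right module as an ideal statement so that the presence of the unit $I$ in the inner-product ideal is decisive.
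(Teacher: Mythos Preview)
Your proof is correct and follows essentially the same outline as the paper's: reduce $X^{\otimes n}_A$ to the module of $\gamma^n$, use $(y_0^n|y_0^n)_A=I$ for right fullness, invoke the singleton basis $\{y_0^n\}$ for left finite degree type, and take left fullness from the definition of $\K$. One small caveat on rigor: the ``example'' after \propref{prop:traces} that you cite for $\sup_\tau d_\tau=N$ is stated there without proof, and the paper actually supplies that bound \emph{here}, via \lemref{lem:lem6} with $a=1_K$ (giving $\sum_i(u_i|u_i)_A\le N\cdot 1_K$); to avoid circularity you should point to \lemref{lem:lem6} rather than the example.
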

\begin{proof}
First, we prove that ${X^{\otimes n}}_A$ is of finite degree type for
$n=1$.  Put $a = 1_K$, a constant function on $K$, in  Lemma \ref{lem:lem6}. 
Then we have 
\[
 \sum_{i=1}^{\infty}(u_i|u_i)_A(y)
   = \tilde{1}_K(y) \le N\cdot 1_K,
\]
Hnece, 
for any tracial state $\tau$ on $A$, we have
\[
\sum_{i=1}^{\infty}\tau((u_i|u_i)_A) \le N. 
\]
Thus  $X_A$ is of finite degree type.
For general $n$, the conclusion follows from the fact 
that $X^{\otimes n}_A$ is
the Hilbert $C^*$-module associated with a self similar map
$\gamma^n=(\gamma_{i_1}\circ \dots \circ
\gamma_{i_n})_{(i_1,\dots,i_n) \in \Sigma^n}$ on $K$ as in 
\cite{KW1}. 
Since $\{y_0^n\}$ constitutes a finite basis of 
${}_{\K(X^{\otimes n})}X^{\otimes n}$, it is clear that 
${}_{\K(X^{\otimes n})}X^{\otimes n}$
is of finite degree type.

Since  $(y_0^n,y_0^n)_A = 1_K$, $X^{\otimes n}_A$ is full.  By definition,
${}_{\K(X^{\otimes n})}X^{\otimes n}$ is full as a left $\K(X^{\otimes
 n})$-module.  
\end{proof}

Let $\{\xi_i\}_{i=1}^{\infty}$ be a basis for $X^{\otimes n}$.
By Proposition \ref{prop:traces} and Lemma \ref{lem:full},
for a finite Borel measure $\mu$, 
there exists a finte trace $\tilde{\pi}_n(\mu)$ on $\K(X^{\otimes n})$ 
such that 
\[
 \tilde{\pi}_n(\mu) (T) = \sum_{i=1}^{\infty}\mu((\xi_i|T\xi_i)_A).
\]
and for $\xi$, $\eta \in X^{\otimes n}$, 
\[
 \tilde{\pi}_n(\mu)(\theta_{\xi,\eta})  = \mu((\eta|\xi)_A).
\]
Since $y_0^n \in X^{\otimes n}$ constitutes a basis for the left Hilbert
$C^*$-module ${}_{\K(X^{\otimes n })}X^{\otimes n}$, 
for a trace $\tau$ on $\K(X^{\otimes n})$, there exists a finite
Borel measure $\tilde{\psi}(\tau)_n$ such that
\[
 \tilde{\psi}_n(\tau)(a) = \tau(\theta_{y_0^n a,y_0^n}).
\]
for $a \in A = C(K)$. 

We shall study the traces on the core
$\F^{(\infty)} = \overline{\bigcup_{n=0}^{\infty}\F^{(n)}}$, where 
 $\F^{(n)} = A \otimes I + \K(X)\otimes I + \\K(X^{\otimes 2})\otimes I 
 \cdots + \K(X^{\otimes n}) \subset \L(X^{\otimes n})$. 
We usually identify $\F^{(n)}$ with the  
$C^*$-subalgebra of  $\O_{\gamma}$ generated by $a \in A$ and $S_xS_y^*$ 
for $x,y \in X^{\otimes k}$, $k=1,\dots, n$,  by identifying $S_xS_y^*$  with 
$\theta_{x,y}$ and $a$ with $\phi(a)$.  

We note that $\F^{(n+1)} = \F^{(n)} \otimes I + \K(X^{\otimes n+1})$, 
where $\F^{(n)}$ is
a $C^*$-subalgebra  of $\F^{(n+1)}$ containing unit and $\K(X^{\otimes
n+1})$ is an ideal of $\F^{(n+1)}$.
Any finite trace $\tau$ on $\F^{(\infty)}$ is determined by a sequence 
$(\tau_{[n]})_n$ of its restrictions $\tau_{[n]}$
to $\F^{(n)}$. Each trace $\tau_{[n+1]}$ is determmied by 
its restriction $\tau_{[n]}$ to  $\F^{(n)}$ and the restriction 
$\tau_{n+1}$
to $\K(X^{\otimes n+1})$ which is described by a finite Borel
measure $\mu_{n+1}$ on $K$ under the Riefell correspondence.

Conversely a compatible  sequence $(\tau_{[n]})_n$ of traces 
on increasing subalgebras 
$$
A=\F^{(0)} \subset \F^{(1)}\subset \F^{(2)} 
\subset \dots  \F^{(n )}\subset \dots 
$$
gives a trace on the inductive limit $\F^{(\infty)}$. 
First we find a sequence $(\mu_n)_n$ of finite Borel measures 
on $K$,  which  gives a sequence $(\tau_n)_n$ of traces on $\K(X^{\otimes n})$ 
by the Riefell correspondence. We shall extend 
a trace $\tau_{[n]}$ on $\F^{(n )}$ and a trace $\tau_{n+1}$
on $\K(X^{\otimes n+1})$ to a trace  $\tau_{[n+1]}$ on 
$\F^{(n+1 )}$.

Therefore 
we need the following extension lemma  
of traces to construct traces 
on the core and classify them. 

Let $A$ be a $C^*$-algebra and $I$ be an ideal of $A$.  For a positive 
linear functional
$\varphi$ on $I$, we denote by $\overline{\varphi}$ the canonical extension
of $\varphi$ to $A$. Recall that for an approximate unit 
$(u_{\lambda})_{\lambda}$ of $I$ and $x \in A$, we have  
$$
\overline{\varphi}(x) = \lim _{\lambda}{\varphi}(xu_{\lambda}) 
$$

We refer \cite{B} for the property of the canonical extension of
states.

The following key lemma is  proved  in Proposition 12.5 of 
 Exel and Laca \cite{EL} for
state case, and is modified  in Kajiwara and Watatani \cite{KW2} for trace
case.

\begin{lemma} \label{lem:extension}
Let $A$ be a unital $C^*$-algebra.  Let $B$ a $C^*$-subalgebra of $A$ 
 containing the
unit and
$I$ an ideal of $A$ such that $A=B+I$.  Let $\tau$ be a finite trace on
$B$,
and $\varphi$ a finite trace on  $I$. Suppose that the following
conditions are
satisfied :
\begin{enumerate}
 \item $\varphi (x) = \tau(x) $ for $ x \in B \cap I$.
 \item $\overline{\varphi}(x)  \le \tau(x) $ for positive $ x \in B$.
\end{enumerate}
Then there exists a unique finite trace on $A$ which extends $\tau$ and
$\varphi$.
Conversely, if there exists an  finite trace on $A$, then its 
restrictions
$\tau$  and $\varphi$ on $B$ and $I$ must satisfy  the above 
conditions (1) and (2). 
\end{lemma}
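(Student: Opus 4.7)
The plan is to build the extension as $\rho := \overline{\varphi} + \psi$, where $\overline{\varphi}$ is the canonical extension of the trace $\varphi$ from $I$ to a positive linear functional on $A$, defined via an approximate unit $(u_\lambda)$ of $I$, and $\psi$ is an auxiliary trace on $A$ vanishing on $I$ that corrects $\overline{\varphi}|_B$ so that the total restriction to $B$ is $\tau$. The first step is to verify that $\overline{\varphi}$ is itself a trace on $A$: for $a,b \in A$ the element $a u_\lambda b$ lies in $I$, so $\varphi(a u_\lambda b) = \varphi(b a u_\lambda)$, and combining this with standard approximate-unit manipulations yields $\overline{\varphi}(ab) = \overline{\varphi}(ba)$ in the limit. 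This piece is essentially the canonical-extension-of-a-state machinery cited from Blackadar, adapted to the tracial setting.

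Next, define $\sigma := \tau - \overline{\varphi}|_B$ on $B$. Hypothesis (2) makes $\sigma$ a positive trace on $B$, and because $\overline{\varphi}$ restricted to $I$ coincides with $\varphi$, hypothesis (1) forces $\sigma$ to vanish on the ideal $B \cap I$ of $B$. Therefore $\sigma$ descends to a positive trace on the $C^*$-quotient $B/(B \cap I)$. Using the algebraic isomorphism $A/I \cong B/(B \cap I)$ coming from $A = B + I$---which is in fact a $C^*$-algebra isomorphism since both sides are $C^*$-quotients and the induced map is a $*$-homomorphism---I would transport this to a positive trace on $A/I$ and pull it back along the quotient map $A \to A/I$ to obtain a positive trace $\psi$ on $A$ vanishing on $I$ whose restriction to $B$ equals $\sigma$. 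Then $\rho := \overline{\varphi} + \psi$ has restriction $\varphi + 0 = \varphi$ on $I$ and restriction $\overline{\varphi}|_B + \sigma = \tau$ on $B$, and inherits positivity and traciality from its two summands.

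For uniqueness, I would invoke the standard fact that any positive extension $\rho'$ of $\varphi$ from $I$ to $A$ satisfies $\rho' \geq \overline{\varphi}$ on positive elements, since $\varphi(u_\lambda a u_\lambda) = \rho'(u_\lambda a u_\lambda) \leq \rho'(a)$ for $a \geq 0$. Hence if $\rho'$ is any finite trace on $A$ extending both $\varphi$ and $\tau$, then $\rho' - \overline{\varphi}$ is a positive trace on $A$ vanishing on $I$ whose restriction to $B$ is $\sigma$, and the same isomorphism $A/I \cong B/(B \cap I)$ forces $\rho' - \overline{\varphi} = \psi$, giving $\rho' = \rho$. The converse direction is immediate: given a finite trace $\omega$ on $A$ with the prescribed restrictions, condition (1) is just direct evaluation on $B \cap I$, while condition (2) follows from $\omega \geq \overline{\varphi}$ restricted to the positive cone of $B$.

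The main obstacle is the bookkeeping of positivity through the descent-and-pullback construction of $\psi$: one must confirm that $\sigma$, after descending to $B/(B \cap I)$, really corresponds via the isomorphism to a positive trace on $A/I$, and that the subsequent pullback to $A$ remains positive and tracial. The key point is that $A/I \cong B/(B \cap I)$ must be interpreted as an isomorphism of $C^*$-algebras, not merely a linear bijection, so that positive cones match up. The complementary delicate point is verifying traciality of $\overline{\varphi}$ itself, but this is precisely what the Exel--Laca argument in the state case already handles, and the tracial modification is essentially mechanical.
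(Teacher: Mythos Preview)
The paper does not supply its own proof of this lemma; it is quoted with attribution to Exel--Laca \cite{EL} for the state version and to \cite{KW2} for the tracial modification. Your proposal is exactly the standard argument behind those references: define $\rho(b+i)=\tau(b)+\varphi(i)$, realize it as $\overline{\varphi}+\psi$ with $\psi$ coming from $\sigma=\tau-\overline{\varphi}|_B$ descended through $B/(B\cap I)\cong A/I$, and read off positivity from condition~(2), traciality from traciality of the pieces, and uniqueness from the minimality of $\overline{\varphi}$ among positive extensions. So your approach matches the one the paper invokes.

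One remark on a point you flag as delicate. Your justification that $\overline{\varphi}$ is itself a trace on $A$---``$\varphi(au_\lambda b)=\varphi(bau_\lambda)$, and combining this with standard approximate-unit manipulations\ldots''---is correct in conclusion but the limit step is not as mechanical as you suggest: the identity $\varphi(abu_\lambda)=\varphi(bu_\lambda a)$ does not immediately yield $\lim_\lambda\varphi(bu_\lambda a)=\lim_\lambda\varphi(bau_\lambda)$. A clean way around this is to pass to the bidual: if $p\in A^{**}$ is the central open projection with $I^{**}=pA^{**}$ and $\varphi^{**}$ the normal extension of $\varphi$ to $I^{**}$, then $\overline{\varphi}(a)=\varphi^{**}(pa)$, and centrality of $p$ together with traciality of $\varphi^{**}$ give $\overline{\varphi}(ab)=\varphi^{**}(papb)=\varphi^{**}(pbpa)=\overline{\varphi}(ba)$. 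Alternatively, you can bypass this entirely: once $\rho$ is built, verify its traciality directly from the decomposition $a_j=b_j+i_j$, using that $\tau$ is a trace on $B$ and that $\varphi$ satisfies $\varphi(cx)=\varphi(xc)$ for $c\in A$, $x\in I$ (which follows from traciality on $I$ by writing $x=y^*y$). Either route closes the gap.
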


In order to describe  an inductive construction of traces on the core 
smoothly, 
we need  to modify the Rieffel correspondence  of finite traces 
$\tau_n$ on 
$\K(X^{\otimes n})$ and 
finite measures $\mu_n$ on $K$ by a normalization as follows: 

\begin{lemma} \label{lem:modified Riefell correspondence}
Let $\gamma$ be a self-similar map on $K$. 
Fix a natural number $n$. 
For a finite Borel measure $\mu$ on $K$, there exist a unique finite 
trace $\tau = {\pi}_n(\mu)$ on $\K(X^{\otimes n})$ such that 
\[
  \tau (\theta_{\xi,\eta}) = \frac{1}{N^n}\mu((\eta |\xi)_A).
\]
for $\xi,\eta \in X^{\otimes n}$. 
Conversely, for a finite trace $\tau$ on $\K(X^{\otimes n})$, there
 exists a unique finite Borel measure $\mu = \psi_n(\tau)$ on $K$
such that  $\pi_n(\mu_n) = \tau_n$ and 
\[
\mu(a) = N^n \tau(\theta_{y_0^n a,y_0^n}),
\]
for $a \in A = C(K)$.  
\end{lemma}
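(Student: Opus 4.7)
The plan is to deduce this from Proposition \ref{prop:traces} applied to $X^{\otimes n}$, viewed as a $\K(X^{\otimes n})$-$A$ equivalence bimodule, combined with the Riesz identification of finite positive Borel measures on $K$ with finite (positive) traces on $A = C(K)$. The factors $N^n$ and $1/N^n$ in the statement serve purely as normalizations, chosen so that the singleton basis $\{y_0^n\}$ of the left module contributes in the right way.

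First I would invoke Lemma \ref{lem:full}: both $X^{\otimes n}_A$ and ${}_{\K(X^{\otimes n})}X^{\otimes n}$ are full and of finite degree type, and $\{y_0^n\}$ is a singleton basis for the left module. Proposition \ref{prop:traces} therefore supplies mutually inverse bijections $\tilde\pi, \tilde\psi$ between finite traces on $A$ and on $\K(X^{\otimes n})$, characterized by
\[
\tilde\pi(\mu)(\theta_{\xi,\eta}) = \mu((\eta|\xi)_A), \qquad
\tilde\psi(\tau)(a) = \tau\bigl(\theta_{y_0^n a,\,y_0^n}\bigr),
\]
the second formula coming from the use of $\{y_0^n\}$ as left basis.

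Next I would define $\pi_n(\mu) := (1/N^n)\tilde\pi(\mu)$ and $\psi_n(\tau) := N^n \tilde\psi(\tau)$; the two explicit formulas stated in the lemma then become tautologies. To check mutual inversion I would compute
\[
\psi_n(\pi_n(\mu))(a) = N^n \cdot \frac{1}{N^n}\, \mu\bigl((y_0^n|y_0^n a)_A\bigr) = \mu\bigl((y_0^n|y_0^n)_A \cdot a\bigr) = \mu(a),
\]
using the previously recorded identity $(y_0^n|y_0^n)_A = 1_K$, while $\pi_n(\psi_n(\tau)) = \tilde\pi(\tilde\psi(\tau)) = \tau$ since the factors $N^n$ and $1/N^n$ cancel and Proposition \ref{prop:traces} gives the bijection. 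Uniqueness on either side is immediate.

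There is no substantive obstacle; the argument is essentially bookkeeping once Proposition \ref{prop:traces} and Lemma \ref{lem:full} are in place. The normalization by $N^n$ is introduced precisely so that the correspondence will be compatible with the later inductive construction of traces on the core $\F^{(\infty)}$.
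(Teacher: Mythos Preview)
Your proof is correct and follows exactly the paper's approach: define $\pi_n(\mu) = N^{-n}\tilde{\pi}_n(\mu)$ and $\psi_n(\tau) = N^n\tilde{\psi}_n(\tau)$, with the existence and bijectivity coming from Proposition~\ref{prop:traces} via Lemma~\ref{lem:full}. The paper's own proof is just this one-line definition, so your version is simply a more detailed unpacking of the same argument.
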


\begin{proof} We can put
 $\displaystyle{\pi_n(\mu) = \frac{1}{N^n}\tilde{\pi}_n(\mu)}$,
  $\displaystyle{\psi_n(\tau) = N^n\tilde{\psi}_n(\tau)}$.
\end{proof}

In the following we shall investigate compatibility conditions
of finite traces 
$\tau_{[n]}$ on $\F^{(n)}$ in terms of 
finite traces $\tau_n$ on $\K(X^{\otimes n})$ and corresponding
finite measures $\mu_n$ on $K$. 

\begin{prop} Let $\gamma$ be a self-similar map on $K$. 
 Let $\mu$ be a probablity Borel measure on $K$, which is  identified with 
a normalized trace on $C(K)$. For 
a positive integer $r$, let   $\varphi  = {\pi}_r(\mu)$ 
be the  corresponding finite trace on $\K(X^{\otimes r})$ under the 
modified Riefell 
correspondence in Lemma \ref{lem:modified Riefell correspondence}. 
Consider the  canonical extension $\overline{\varphi}$ of $\varphi$ to 
$\L(X^{\otimes r})$. For an integer $k$ with $0 \leq k \leq r$, 
put $\mu_k = \frac{1}{N^{r-k}} F^{r-k}(\mu)$. 
Then the restriction  of $\overline{\varphi}$ to $\K(X^{\otimes k}) \otimes I$ 
is exactly the  corresponding finite trace  
on $\K(X^{\otimes k})$ for $\mu_k$  under the modified Riefell 
correspondence, that is, 
for $x,y \in X^{\otimes k}$, we have 
\[
\overline{\varphi}(\theta_{x,y} \otimes I) = \frac{1}{N^k}\mu_k((y|x)_A).
\]

\end{prop}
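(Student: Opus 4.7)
The plan is to exploit the fact that $\{u_i \otimes v_j\}_{i,j}$ is a basis of $X^{\otimes r}$ whenever $\{u_i\}$ is a countable basis of $X^{\otimes k}$ and $\{v_j\}$ is a countable basis of $X^{\otimes (r-k)}$. Using these, I would build an approximate unit for the ideal $\K(X^{\otimes r}) \subset \L(X^{\otimes r})$ from the finite partial sums
$$P_M = \sum_{i,j\le M} \theta_{u_i\otimes v_j,\,u_i\otimes v_j},$$
and then compute the canonical extension via the standard formula
$$\overline{\varphi}(T) = \lim_{M\to\infty} \varphi(T\, P_M)\qquad (T\in \L(X^{\otimes r})),$$
extended by linearity from positive elements (using cyclicity of the trace and the usual polarization).

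Taking $T = \theta_{x,y}\otimes I$, I would unfold the product on simple tensors using the balanced-tensor identity $x(y|u_i)_A\otimes v_j = x\otimes \phi((y|u_i)_A)v_j$, which gives
$$(\theta_{x,y}\otimes I)\,\theta_{u_i\otimes v_j,\,u_i\otimes v_j} \;=\; \theta_{x\otimes \phi((y|u_i)_A)v_j,\; u_i\otimes v_j}.$$
Applying the rank-one formula $\varphi(\theta_{\xi,\eta}) = \frac{1}{N^r}\mu((\eta|\xi)_A)$ for $\varphi=\pi_r(\mu)$ and the standard inner-product formula on $X^{\otimes k}\otimes_A X^{\otimes (r-k)}$ yields
$$\varphi\bigl((\theta_{x,y}\otimes I)\,\theta_{u_i\otimes v_j,\,u_i\otimes v_j}\bigr) \;=\; \frac{1}{N^r}\,\mu\bigl((v_j\mid \phi((u_i|x)_A (y|u_i)_A)\,v_j)_A\bigr).$$

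The next step is to execute the two summations. The sum over $i$ collapses: since $A=C(K)$ is commutative and $\sum_i u_i(u_i|x)_A = x$, one has $\sum_i (u_i|x)_A(y|u_i)_A = (y|x)_A$. This leaves
$$\overline{\varphi}(\theta_{x,y}\otimes I) \;=\; \frac{1}{N^r}\sum_{j}\mu\bigl((v_j\mid \phi((y|x)_A)v_j)_A\bigr).$$
Now I apply the corollary to \lemref{lem:lem6} to the $C^*$-correspondence $X^{\otimes (r-k)}$ (which is the Hilbert bimodule associated with the iterated self-similar map $\gamma^{r-k}$ on $K$); the $j$-sum equals $F^{r-k}(\mu)((y|x)_A)$. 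Substituting $F^{r-k}(\mu)=N^{r-k}\mu_k$ then gives exactly $\frac{1}{N^k}\mu_k((y|x)_A)$, as required.

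The step I expect to require the most care is the justification of the limit defining $\overline{\varphi}$: namely, verifying that $P_M$ is genuinely an approximate unit for $\K(X^{\otimes r})$ in the strict topology of $\L(X^{\otimes r})$, so that $\varphi(T\,P_M)\to \overline{\varphi}(T)$ uniformly enough to permit the interchange with the double summation over $(i,j)$. The absolute convergence needed here ultimately comes from $X^{\otimes r}$ being of finite degree type (\lemref{lem:full}), so the bookkeeping is routine; all remaining computations are rearrangements of identities already established in the previous sections.
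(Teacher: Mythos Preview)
Your proposal is correct and follows essentially the same approach as the paper's own proof: both choose countable bases of $X^{\otimes k}$ and $X^{\otimes (r-k)}$, expand $\overline{\varphi}(\theta_{x,y}\otimes I)$ through the resulting basis of $X^{\otimes r}$, and collapse the two sums using the basis reconstruction identity and \lemref{lem:lem6} for $X^{\otimes(r-k)}$. The only cosmetic difference is that the paper performs the $j$-sum first (invoking \lemref{lem:lem6}) and then the $i$-sum (using that $F^{r-k}(\mu)$ is a trace), whereas you sum over $i$ first via the commutativity of $A=C(K)$; the finite-degree-type bound you cite from \lemref{lem:full} justifies either order.
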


\begin{proof}
Let $(s_i)_i$ be a basis of $X^{\otimes k}$ and 
$(t_j)_j$ be a basis of $X^{\otimes (r-k)}$. Then 
$(s_i \otimes t_j)_{i,j}$ is a basis of 
$X^{\otimes r} = X^{\otimes k} \otimes X^{\otimes (r-k)}$ 
(\cite{KPW1}). 
Using Lemma \ref{lem:lem6} for $X^{\otimes (r-k)}$ and $F^{r-k}$, 
for $x,y \in X^{\otimes k}$,
we have 
\begin{align*}
 \overline{\varphi}(\theta_{x,y} \otimes I) 
= & \sum_{i=1}^{\infty}\sum_{j=1}^{\infty}
\varphi( (\theta_{x,y} \otimes I)
\theta_{s_i\otimes t_j,s_i\otimes t_j})
=  \sum_{i=1}^{\infty}\sum_{j=1}^{\infty}
\varphi(\theta_{\theta_{x,y}(s_i)\otimes t_j,s_i\otimes t_j})\\
= &  \sum_{i=1}^{\infty}\sum_{j=1}^{\infty}
\frac{1}{N^r} \mu ((s_i\otimes t_j | \theta_{x,y}(s_i)\otimes t_j)_A) 
=  \sum_{i=1}^{\infty}\sum_{j=1}^{\infty}
\frac{1}{N^k} \frac{1}{N^{r-k}} \mu ((t_j | (s_i|\theta_{x,y}(s_i))_At_j)_A)\\
= & \sum_{i=1}^{\infty}
\frac{1}{N^k} \frac{1}{N^{r-k}}(F^{r-k}\mu)((s_i|\theta_{x,y}(s_i))_A) 
= \sum_{i=1}^{\infty}
\frac{1}{N^k} \frac{1}{N^{r-k}}(F^{r-k}\mu)((s_i| x(y|s_i)_A)_A))\\ 
= & \sum_{i=1}^{\infty}
\frac{1}{N^k} \frac{1}{N^{r-k}}(F^{r-k}\mu)((s_i|x)_A(y|s_i)_A) 
= \sum_{i=1}^{\infty}
\frac{1}{N^k} \frac{1}{N^{r-k}}(F^{r-k}\mu)((y|s_i)_A(s_i|x)_A) \\
= & 
\frac{1}{N^k} \frac{1}{N^{r-k}}(F^{r-k}\mu)
((y|\sum_{i=1}^{\infty}s_i(s_i|x)_A))_A) 
= \frac{1}{N^k}\mu_k((y|x)_A). 
\end{align*}
\end{proof}

\begin{cor} \label{cor:compact}
Let $\tau$ be a finite trace on $\F^{(n+1)}$ and 
$\tau_{n}$
and $\tau_{n+1}$ be its restriction to 
$\K(X^{\otimes n})\otimes I$ and $\K(X^{\otimes {n+1}})$.
Let $\mu_{n} = \psi_n(\tau_{n})$ and 
$\mu_{n+1}= \psi_{n+1}(\tau_{n+1})$ be the corresponding 
 Borel measures on $K$ 
by modified Riefell correspondence. Then we have the 
following relations: 
\begin{enumerate}
\item 
For $T \in \K(X^{\otimes n})^{+} \subset \F^{(n)}$, we have  that
 \[
 \overline{\tau_{n+1}}(T\otimes I) =
 \pi_n\left(\left(\frac{1}{N}\right)F(\psi_{n+1}(\tau_{n+1}))\right)(T)
 \le \tau_{n}(T).
 \]
\item
For $f \in B(K)^+$, we have that 
 \[
 \psi_n(\overline{\pi_{n+1}(\mu_{n+1})}|\K(X^{\otimes n}))(f)
 = \frac{1}{N}F(\mu_{n+1})(f)
 \le \psi_n(\tau_{n})(f)=\mu_n(f)
 \]
\item
For $T\otimes I \in \K(X^{\otimes n}\otimes I) \cap \K(X^{\otimes n+1})$, we have that 
\[
\tau_{n}(T) = \pi_n\left(\left(\frac{1}{N}\right)F(\mu_{n+1}))\right)(T).
\]
\end{enumerate}
\end{cor}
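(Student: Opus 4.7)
All three statements reduce to the preceding proposition combined with \lemref{lem:extension}, transported between the operator side and the measure side via the modified Rieffel correspondence $\pi_n,\psi_n$ of \lemref{lem:modified Riefell correspondence}. No new computation is needed beyond these ingredients; the proof is essentially a dictionary translation.

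\textbf{Part (1).} I would specialize the preceding proposition to $r=n+1$, $k=n$. Writing $\mu:=\mu_{n+1}=\psi_{n+1}(\tau_{n+1})$, so that $\varphi=\pi_{n+1}(\mu_{n+1})=\tau_{n+1}$, the proposition gives
\[
\overline{\tau_{n+1}}(\theta_{x,y}\otimes I)=\tfrac{1}{N^{n}}\mu_n'\bigl((y|x)_A\bigr),\qquad \mu_n':=\tfrac{1}{N}F(\mu_{n+1}),
\]
for all $x,y\in X^{\otimes n}$. By the defining formula for $\pi_n$, the right hand side equals $\pi_n(\mu_n')(\theta_{x,y})$, and linearity plus norm continuity on $\K(X^{\otimes n})$ then yield the asserted equality for every $T\in\K(X^{\otimes n})$. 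For the inequality, I would invoke \lemref{lem:extension} applied to the decomposition $\F^{(n+1)}=(\F^{(n)}\otimes I)+\K(X^{\otimes n+1})$, where the first summand is a unital $C^*$-subalgebra and the second is an ideal. The converse direction of that lemma, applied to the finite trace $\tau_{[n+1]}$ restricting to $\tau_{[n]}$ and $\tau_{n+1}$ respectively, yields condition~(2): $\overline{\tau_{n+1}}(T\otimes I)\le \tau_{[n]}(T\otimes I)=\tau_n(T)$ for positive $T\otimes I\in\F^{(n)}\otimes I$, which covers all $T\in\K(X^{\otimes n})^{+}$.

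\textbf{Parts (2) and (3).} Part (2) is the image of part (1) under $\psi_n$: applying $\psi_n$ to both the equality and the inequality in (1) with $\tau_{n+1}=\pi_{n+1}(\mu_{n+1})$ produces the stated identity and bound as positive linear functionals on $C(K)$, which extend to all $f\in B(K)^{+}$ by Riesz representation since both sides are positive Borel measures on $K$. For part (3), note that when $T\otimes I$ already lies in $\K(X^{\otimes n+1})$ the canonical extension coincides with the trace itself, $\overline{\tau_{n+1}}(T\otimes I)=\tau_{n+1}(T\otimes I)$; condition~(1) of \lemref{lem:extension} then forces $\tau_{n+1}(T\otimes I)=\tau_{[n]}(T\otimes I)=\tau_n(T)$, and combining with the equality from part (1) gives the claim.

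\textbf{Main obstacle.} No serious difficulty is expected; this is a bookkeeping reorganization of two already established results. The only point that demands care is the pattern of $N^{k}$ normalizations, which must be tracked consistently so that the modified maps $\pi_n,\psi_n$ line up with the unnormalized identity coming from the preceding proposition. One also has to remember that $\overline{\tau_{n+1}}$ always coincides with $\tau_{n+1}$ on its own ideal $\K(X^{\otimes n+1})$, which is precisely what upgrades the inequality of (1) to the equality of (3) on the intersection.
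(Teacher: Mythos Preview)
Your proposal is correct and follows exactly the approach the paper intends: the paper's own proof is the single line ``Apply \lemref{lem:extension} as $A = \F^{(n+1)}$, $B = \F^{(n)} \otimes I$ and $I = \K(X^{\otimes n+1})$,'' and you have simply unpacked that instruction together with the preceding proposition specialized to $r=n+1$, $k=n$. The only minor point to note is that the preceding proposition is stated for probability measures while $\mu_{n+1}$ is merely finite, but its proof is purely algebraic and works verbatim for any finite measure.
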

\begin{proof}
Apply Lemma \ref{lem:extension} as $A = \F^{(n+1)}$, 
$B = \F^{(n)} \otimes I$ and $I = \K(X^{\otimes n+1})$. 
\end{proof}

The Corollary above is used to show a relation between point masses of 
$\mu_{n}$ and $\mu_{n+1}$ later.

The content of (2) in the Corolally above is expressed by the following
commutative diagram:
\[
 \begin{CD}
  \T(\K(X^{\otimes n+1})) @>{\beta _{n+1}}>>
\T(\K(X^{\otimes n})) \\
  @V{\psi_{n+1}}VV @V{\psi_{n}}VV \\
  M(K)  @>{\frac{1}{N}F}>>  M(K) 
 \end{CD}
\]
where $\beta _{n+1}$ is defined by 
$\beta _{n+1}(\tau_{n+1}) = \overline{\tau_{n+1}}|\K(X^{\otimes n})$

In particular, if $\mu_{n+1}$ is a Dirac measure $\delta_{y}$, 
we have the following Corollary, which was obtained in 
\cite{IKW}. 

\begin{cor}
\[
\psi_n((\overline{\pi_{n+1}(\delta_{y})}|\K(X^{\otimes n}))
 = \frac{1}{N}\sum_{x \in h^{-1}(y)} \delta_x
\]
where we do not count the multiplicity for the right hand side.
\end{cor}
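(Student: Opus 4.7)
The plan is to derive this Corollary as an immediate specialization of part (2) of Corollary \ref{cor:compact}. First, I would recall that Corollary \ref{cor:compact}(2) gives, for any finite Borel measure $\mu_{n+1}$ corresponding to a trace on $\K(X^{\otimes n+1})$, the identity
\[
\psi_n\bigl(\overline{\pi_{n+1}(\mu_{n+1})}\big|\K(X^{\otimes n})\bigr) = \frac{1}{N} F(\mu_{n+1}).
\]
Therefore the whole statement reduces to computing $F(\delta_y)$.

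Next I would invoke the explicit formula for $F$ applied to a Dirac mass, which was already established in the discussion preceding Lemma \ref{lem:lem6}: by definition of $F$ and of $\tilde{a}$, one has
\[
F(\delta_y)(a) = \delta_y(\tilde a) = \tilde a(y)
= \sum_{j=1}^{N}\frac{1}{e_{\gamma}(\gamma_j(y),y)}a(\gamma_j(y))
= \sum_{x\in h^{-1}(y)} a(x),
\]
where the last sum is taken \emph{without multiplicity}; the factor $1/e_{\gamma}(\gamma_j(y),y)$ precisely cancels the overcounting that occurs when several indices $j$ produce the same preimage. Hence $F(\delta_y) = \sum_{x \in h^{-1}(y)} \delta_x$ as measures on $K$.

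Substituting this into the identity from Corollary \ref{cor:compact}(2) gives
\[
\psi_n\bigl(\overline{\pi_{n+1}(\delta_y)}\big|\K(X^{\otimes n})\bigr) = \frac{1}{N}\sum_{x \in h^{-1}(y)} \delta_x,
\]
which is the claim. There is no real obstacle here: the statement is essentially a bookkeeping consequence of the modified Rieffel correspondence (Lemma \ref{lem:modified Riefell correspondence}) together with the behavior of $F$ on point masses, both of which are already set up. The only mild subtlety worth flagging in the proof is the distinction between summation with and without multiplicity, which is why the branch indices $e_{\gamma}(\gamma_j(y),y)$ disappear in the final expression.
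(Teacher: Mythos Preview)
Your proposal is correct and follows essentially the same approach as the paper: the corollary is obtained by specializing the identity $\psi_n(\overline{\pi_{n+1}(\mu_{n+1})}|\K(X^{\otimes n})) = \frac{1}{N}F(\mu_{n+1})$ (the equality in Corollary~\ref{cor:compact}(2), which in turn comes from the preceding Proposition) to $\mu_{n+1}=\delta_y$, together with the already-established formula $F(\delta_y)=\sum_{x\in h^{-1}(y)}\delta_x$. The only cosmetic point is that the equality you invoke really comes from the Proposition just before Corollary~\ref{cor:compact} and holds for any finite measure $\mu_{n+1}$, not only those arising from a trace on $\F^{(n+1)}$; this is harmless here.
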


For $f \in {\rm C}(K)$, we define $\alpha^n(f) \in {\rm C}(K)$ by
$\alpha^n(f)(x) = f(h^{n}(x))$.  Then it holds that
\[
  \xi f  = \phi_n(\alpha^n(f))\xi, \ (\xi \in X^{\otimes n}).
\]
For $T \in \K(X^{\otimes n})$, we need to find $f \in {\rm C}(K)$ such that
$(T \phi_n(\alpha_n(f))) \otimes I \in \K(X^{\otimes n})\otimes I 
 \cap \K(X^{\otimes n+1})$.
By Pimsner \cite{Pi}, it holds that 
\[
 \K(X^{\otimes n}) \otimes I \cap \K(X^{\otimes n+1}) = 
\K(X^{\otimes n}J_X) \otimes I.
\]
where $J_X :=\varphi ^{-1}(K(X)) =  \{\,f \in {\rm C}(K)\,|\,f(b)=0\ \text{ for } b \in B_{\gamma}\}$.

The right hand side is expressed as
\begin{align*}
 \K(X^{\otimes n}J_X)
= & {\rm C}^* \left\{\,\sum_{i}\theta_{\xi_ig_i,\eta_i f_i}\,|\,
 \xi_i,\, \eta_i \in X^{\otimes n},\, g_i,\, f_i \in J_X  \right\} \\
=&   {\rm C}^* \left\{\,\sum_{i}\theta_{\xi_i ,\eta_i f_i}\,|\,
 \xi_i,\, \eta_i \in X^{\otimes n},\, f_i \in J_X  \right\}.
\end{align*}
For $\xi ,\eta \in X^{\otimes n}$, we have that 
\[
 \theta_{\xi ,\eta f} = \theta_{\xi, \phi_n(\alpha^n(f))\eta }
 =  \theta_{\xi,\eta} \phi_n(\alpha^n(f)^*) \in \L(X^{\otimes n}).
\]
Since any $T \in \K(X^{\otimes n})$ is a liimit of  a sequence $(T_k)_k$ 
such that  $T_k = \sum_{p=1} \theta_{x_p,y_p}$ $(x_p, y_p \in X^{\otimes n})$ 
is a finite sum of "rank one " operators, 
we have obtain the following lemma: 
\begin{lemma}
Let $T \in \K(X^{\otimes n})$ and $f \in J_X$, then 
$(T\phi_n(\alpha_n(f)))\otimes I
 \in \K(X^{\otimes n})\otimes I \cap \K(X^{\otimes n+1})$.
\end{lemma}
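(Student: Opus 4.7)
The plan is to combine Pimsner's identification
$$\K(X^{\otimes n}) \otimes I \cap \K(X^{\otimes n+1}) = \K(X^{\otimes n} J_X) \otimes I,$$
recorded just above the lemma, with the covariance identity $\xi f = \phi_n(\alpha^n(f))\xi$ for $\xi \in X^{\otimes n}$. With this reduction in hand, it suffices to show that $T\phi_n(\alpha^n(f)) \in \K(X^{\otimes n} J_X)$ whenever $T \in \K(X^{\otimes n})$ and $f \in J_X$, since tensoring with $I$ afterwards gives membership in the intersection.

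First I would handle rank-one operators. For $\xi, \eta \in X^{\otimes n}$, a direct computation (already essentially carried out in the display preceding the lemma) gives
\begin{align*}
 \theta_{\xi,\eta}\phi_n(\alpha^n(f))
  &= \theta_{\xi, \phi_n(\alpha^n(f))^*\eta}
   = \theta_{\xi, \phi_n(\alpha^n(\overline{f}))\eta}
   = \theta_{\xi, \eta\, \overline{f}}.
\end{align*}
Since $J_X = \{g \in C(K) : g(b) = 0 \text{ for every } b \in B_\gamma\}$ is a selfadjoint ideal of $A$, we have $\overline{f} \in J_X$, hence $\eta\, \overline{f} \in X^{\otimes n} J_X$ and consequently $\theta_{\xi, \eta\, \overline{f}} \in \K(X^{\otimes n} J_X)$.

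Next I would extend by linearity to any finite sum $T_0 = \sum_{p} \theta_{x_p, y_p}$ with $x_p, y_p \in X^{\otimes n}$; by the rank-one computation, $T_0 \phi_n(\alpha^n(f)) = \sum_p \theta_{x_p, y_p\, \overline{f}} \in \K(X^{\otimes n} J_X)$. Finally, for a general $T \in \K(X^{\otimes n})$, choose a sequence $(T_k)_k$ of such finite sums with $T_k \to T$ in norm. Since $\phi_n(\alpha^n(f)) \in \L(X^{\otimes n})$ is bounded, right multiplication by it is norm-continuous, so $T_k \phi_n(\alpha^n(f)) \to T \phi_n(\alpha^n(f))$; since $\K(X^{\otimes n} J_X)$ is norm-closed, the limit lies in $\K(X^{\otimes n} J_X)$. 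Applying Pimsner's identification yields the conclusion.

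There is no real obstacle here: the content of the lemma is the observation that right-multiplication by $\phi_n(\alpha^n(f))$ transfers the factor $f \in J_X$ onto the ``ket'' side of rank-one operators, so that a density argument places us inside the Pimsner formula for the intersection. The only delicate point is keeping track of the adjoint (so that a complex conjugate $\overline{f}$ rather than $f$ appears), but this does not affect the argument since $J_X$ is selfadjoint.
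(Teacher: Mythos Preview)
Your proposal is correct and follows essentially the same route as the paper: reduce via Pimsner's formula to showing $T\phi_n(\alpha^n(f)) \in \K(X^{\otimes n}J_X)$, verify this on rank-one operators using the covariance identity $\xi f = \phi_n(\alpha^n(f))\xi$, and pass to general $T$ by linearity and norm-density. Your tracking of the conjugate $\overline{f}$ is slightly more explicit than the paper's display $\theta_{\xi,\eta f} = \theta_{\xi,\eta}\phi_n(\alpha^n(f)^*)$, but the content is identical.
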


The following Lemmas describe  relations  of point masses of the 
corresponding measures 
$\mu _n$ and $\mu _{n+1}$ on $K$ for a finite trace on  $\F^{(n+1)}$.

\begin{lemma}\label{lem:point}
Let $\tau$ be a finite trace on $\F^{(n+1)}$ and 
$\tau_{n}$
and $\tau_{n+1}$ be its restriction to 
$\K(X^{\otimes n}) \otimes I$ and $\K(X^{\otimes {n+1}})$.
Let $\mu_{n} = \psi_n(\tau_{n})$ and 
$\mu_{n+1}= \psi_{n+1}(\tau_{n+1})$ be the corresponding 
 Borel measures on $K$ 
by modified Riefell correspondence. Let $b \in K$ and $b= \gamma_i(a)$ 
for some $i$, that is, $h(b) =a$. 
Then we have the following: 
\begin{enumerate}
\item If $b$ is not in $B_{\gamma}$, then the point masses on $a$ and $b$ 
are related as  $\mu_{n+1}(\{ a\})= N\mu_n(\{ b\})$. 
\item If $b$ is in $B_{\gamma}$, then $\mu_{n+1}(\{a \}) \leq N\mu_n(\{b \})$. 
\end{enumerate}
 
\end{lemma}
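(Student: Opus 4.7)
The plan is to derive both statements by evaluating Corollary~\ref{cor:compact} at the singleton $\{b\}$, after first passing from the operator-algebraic identities there to scalar identities between the measures $\mu_n$ and $\frac{1}{N}F(\mu_{n+1})$ on $K$. The key pointwise formula is $F(\mu)(\{c\})=\mu(\{h(c)\})$: since $F(\mu)(a)=\mu(\tilde a)$ on $C(K)$ extends to a finite Borel measure on $K$, taking a decreasing sequence $a_k\in C(K)$ with $a_k\downarrow \chi_{\{c\}}$ gives $\tilde{a}_k(y)=\sum_{x\in h^{-1}(y)}a_k(x)\downarrow \chi_{\{h(c)\}}(y)$, so bounded convergence yields the formula; with $c=b$ this reads $F(\mu_{n+1})(\{b\})=\mu_{n+1}(\{a\})$.

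Statement (2) is then immediate from part (2) of Corollary~\ref{cor:compact}, which says $\frac{1}{N}F(\mu_{n+1})\le \mu_n$ as positive Borel measures: evaluating at $\{b\}$ gives $\frac{1}{N}\mu_{n+1}(\{a\})\le \mu_n(\{b\})$, i.e.\ $\mu_{n+1}(\{a\})\le N\mu_n(\{b\})$.

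For statement (1), the promotion to equality comes from part (3) of Corollary~\ref{cor:compact}, applied to rank-one operators of the form $T=\theta_{y_0^n f,\,y_0^n g}$ with $f\in C(K)$ and $g\in J_X$; such $T$ lies in $\K(X^{\otimes n}J_X)\otimes I=(\K(X^{\otimes n})\otimes I)\cap \K(X^{\otimes n+1})$. Since $(y_0^n|y_0^n)_A=1_K$ one has $(y_0^n g\,|\,y_0^n f)_A=\overline g\, f$, and the modified Rieffel correspondence of Lemma~\ref{lem:modified Riefell correspondence} translates the operator identity into the scalar identity $\mu_n(\overline g\, f)=\frac{1}{N}F(\mu_{n+1})(\overline g\, f)$ for all $f\in C(K)$ and $g\in J_X$. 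Since the span of such products is norm-dense in $J_X$, the two measures agree on $J_X$; and for any open $U$ disjoint from the finite closed set $B_\gamma$, every $f\in C(K)$ with $0\le f\le\chi_U$ automatically lies in $J_X$, so by inner regularity $\mu_n(U)=\frac{1}{N}F(\mu_{n+1})(U)$. Since $b\notin B_\gamma$, choosing a decreasing sequence of open neighborhoods $U_k\ni b$ with $U_k\subset K\setminus B_\gamma$ and $\bigcap_k U_k=\{b\}$ gives $\mu_n(\{b\})=\frac{1}{N}F(\mu_{n+1})(\{b\})=\frac{1}{N}\mu_{n+1}(\{a\})$, i.e.\ $\mu_{n+1}(\{a\})=N\mu_n(\{b\})$.

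The main technical step is the passage from the operator equality on $\K(X^{\otimes n}J_X)$ to the equality of the measures on $K\setminus B_\gamma$; the calculation of $(y_0^n f|y_0^n g)_A=\overline f\, g$, using that $\{y_0^n\}$ is a one-element left basis for ${}_{\K(X^{\otimes n})}X^{\otimes n}$, is the only real computation, after which the finiteness of $B_\gamma$ from Assumption~A reduces (1) to a standard regularity argument.
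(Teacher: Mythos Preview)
Your proof is correct and follows essentially the same route as the paper: both use Corollary~\ref{cor:compact}(2) for the inequality in (2) and Corollary~\ref{cor:compact}(3) applied to operators of the form $\theta_{y_0^n,\,y_0^n g}$ with $g\in J_X$ for the equality in (1), then pass to the point mass at $b$ by approximating $\chi_{\{b\}}$ with continuous functions supported away from $B_\gamma$. The only difference is presentational---you first establish the identity $\mu_n=\tfrac{1}{N}F(\mu_{n+1})$ on all of $J_X$ and then invoke regularity, while the paper plugs a specific bump sequence $f_m\in J_X$ directly into the rank-one formula and takes the limit---but the substance is identical.
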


\begin{proof}
(1)Suppose that $b$ is not in $B_{\gamma}$. 
There exist a sequence of compact neighborhoods $F_m$ of $b$ and
open  neighborhoods $V_m$ of $b$ such that $F_m \subset V_m$, $V_m$ does
not contain any element of $B_{\gamma}$ and $V_m$ converges to $b$ as
$m$ tends to $\infty$.
We take $f_m \in {\rm C}(K)^+$ such that $0 \le f_m \le 1$, $f_m=1$
on $F_m$ and $f_m=0$ on $\overline{V_m}^c$.
Since $f_m$ vanishes on $B_{\gamma}$,  we have $f_m$ is in $J_X$. 
Therefore 
\[
\theta_{y_0^n,y_0^n f_m} \otimes I \in \K(X^{\otimes n})\otimes I \cap \K(X^{\otimes n+1}). 
\]
Then by (3) in Corollary \ref{cor:compact}, it holds that
\[
\tau_n(\theta_{y_0^n,y_0^n f_m}) 
= \pi_n\left(\left(\frac{1}{N}\right)F(\mu_{n+1}))\right)
(\theta_{y_0^n,y_0^n f_m}).
\]
Then 
\[
\frac{1}{N^{n}}\mu_n((y_0^n f_m| y_0^n)_A) = 
 \frac{1}{N^{n+1}}F(\mu_{n+1})((y_0^n f_m| y_0^n)_A ).
\]
Since $(y_0^n | y_0^n)_A  =1$, 
$\frac{1}{N^{n}}\mu_n(f_m) = 
 \frac{1}{N^{n+1}}F(\mu_{n+1})(f_m)$. 
 Let $m \to \infty$,  then $f_m$ converges to the characterestic function 
$\chi_{\{b\}}$ on the one point set $\{b\}$. 
Since $\tilde{\chi_{\{b\}}}  = \chi_{\{a\}} $, 
\[
\mu_n(\{b\}) = \frac{1}{N}F(\mu_{n+1})(\{b\})
 = \frac{1}{N} \mu_{n+1}(\{a\}).
\]
(2)Just apply (2) in  Corollary \ref{cor:compact}.  
A similar  calculation directly 
implies that 
\[
\frac{1}{N}F(\mu_{n+1})(\chi_{\{b\}}) \le \mu_n(\chi_{\{b\}}). 
\]
Therefore we have 
$
\frac{1}{N} \mu_{n+1}(\{a\}) \leq \mu_n(\{b\})
$. 
\end{proof}

\section
{Construction of model traces on the core}
For each $b \in B_{\gamma}$ and $r=0,1,2,\dots$, we shall construct a model 
trace $\tau^{(b,r)}$    on $\F^{(\infty)}$.

We denote by $\delta_{b}$ the Dirac measure on ${b}$.  
For $0  \le i \le r$, we define Borel measures
$\mu^{(b,r)}_i$
on $K$ by
\[
  \mu_i^{(b,r)} = \frac{1}{N^{r-i}}F^{r-i}(\delta_{b}), 
\]

We note that
\[
 \mu^{(b,r)}_i (f) = \frac{1}{N^{r-i}}
\sum_{(j_1,j_2,\dots,j_r) \in \Sigma^{r-i}}
f(\gamma_{j_1}\circ \cdots \circ \gamma_{j_{r-i}}(b)), \ \ (f \in A = C(K))
\]
because $h^{-k}(b) $ does not contain the brancehd points for $k = 1,2,3, \dots$ 
by the Assumption A.  

For $i >r$ we define Borel measures
$\mu^{(b,r)}_i = 0 $
on $K$.

\begin{prop} Let $\gamma$ be a self-similar map on $K$ with assumption A. 
For each $b \in B_{\gamma}$ and $r=0,1,2,\dots$, there exists a unique 
finite trace $\tau^{(b,r)}$ on $\F^{(\infty)}$ such that 
$\tau^{(b,r)}|A = \mu^{(b,r)}_0$,  the restriction of $\tau^{(b,r)}$ to 
$K(X^{\otimes i})$ is $\pi_i(\mu^{(b,r)}_i)$ for  $0  \le i \le r$ and 
the restriction of $\tau^{(b,r)}$ to 
$K(X^{\otimes i})$ is zero  for  $i >  r$, that is, 
\[
\tau^{(b,r)}(f) = (\frac{1}{N^{r}}F^{r}(\delta_{b}))(f) = 
\frac{1}{N^{r}}
\sum_{x \in h^{-r}(b)} 
f(x)   \ \  \text{ for } \ \ f \in A
\]
and 
\[
\tau^{(b,r)}(\theta_{x,y}) = \frac{1}{N^{i}}\mu_i^{(b,r)} ((y|x)_A) 
\text{ for } \ \ x,y \in X^{\otimes i}, \ \ i<r . 
\]
\end{prop}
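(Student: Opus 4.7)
The plan is to build $\tau^{(b,r)}$ as the inductive limit of compatible traces along the filtration $\F^{(0)} \subset \F^{(1)} \subset \cdots$, each extension carried out via Lemma \ref{lem:extension}. A preliminary check: by Assumption A(3), $h^{-k}(b)\cap B_\gamma = \emptyset$ for $k\ge 1$, so $|h^{-k}(b)| = N^k$ and each $\mu_i^{(b,r)}$ is a probability measure. The key compatibility
\[
\tfrac{1}{N^{m}}F^{m}(\mu_{j}^{(b,r)}) \;=\; \mu_{j-m}^{(b,r)} \qquad (0 \le m \le j \le r)
\]
follows immediately from $\mu_i^{(b,r)} = \tfrac{1}{N^{r-i}}F^{r-i}(\delta_b)$.

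Starting from $\tau^{(b,r)}_{[0]} := \mu_0^{(b,r)}$ on $\F^{(0)} = A$, I extend $\tau^{(b,r)}_{[n]}$ to $\F^{(n+1)} = \F^{(n)}\otimes I + \K(X^{\otimes n+1})$ by applying Lemma \ref{lem:extension} with $B := \F^{(n)}\otimes I$, $I := \K(X^{\otimes n+1})$, $\tau := \tau^{(b,r)}_{[n]}$ and $\varphi := \pi_{n+1}(\mu_{n+1}^{(b,r)})$ (interpreted as the zero trace when $n+1 > r$). For $n+1 \le r$, the proposition preceding Corollary \ref{cor:compact} together with the compatibility above yields, for each $k \le n$,
\[
\overline{\varphi}\big|_{\K(X^{\otimes k})\otimes I} \;=\; \pi_k\!\bigl(\tfrac{1}{N^{n+1-k}}F^{n+1-k}(\mu_{n+1}^{(b,r)})\bigr) \;=\; \pi_k(\mu_k^{(b,r)}) \;=\; \tau^{(b,r)}_{[n]}\big|_{\K(X^{\otimes k})},
\]
and likewise $\overline{\varphi}|_{A \otimes I} = \mu_0^{(b,r)}$. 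Since $\F^{(n)} = A + \K(X) + \cdots + \K(X^{\otimes n})$, any linear functional on $\F^{(n)}$ is determined by its values on these summands; piecewise agreement transfers by linearity to $\overline{\varphi} = \tau^{(b,r)}_{[n]}$ on $B$, so hypotheses (1) and (2) of Lemma \ref{lem:extension} hold simultaneously, with equality in (2).

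The main obstacle is the range $n \ge r$, where $\varphi = 0$: hypothesis (2) becomes trivial, but hypothesis (1) requires $\tau^{(b,r)}_{[n]}$ to annihilate $(\F^{(n)}\otimes I)\cap \K(X^{\otimes n+1})$. This is the single place where the branch-point hypothesis is genuinely used. Identifying $X^{\otimes m}$ with the bimodule of the self-similar map $\gamma^{m}$ (as in Lemma \ref{lem:full}), an iterated application of Pimsner's identity $\K(X^{\otimes n})\otimes I\cap \K(X^{\otimes n+1}) = \K(X^{\otimes n}J_X)\otimes I$ reduces the required vanishing to the claim that each $\mu_k^{(b,r)}$ ($k\le r$) annihilates the ideal $J_{X^{\otimes n+1-k}}$ of continuous functions vanishing on $B_{\gamma^{n+1-k}} = \bigcup_{j=0}^{n-k} h^{-j}(B_\gamma)$. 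But $\mu_k^{(b,r)}$ is supported on $h^{-(r-k)}(b) \subseteq h^{-(r-k)}(B_\gamma)$, which lies in $B_{\gamma^{n+1-k}}$ precisely because $n \ge r$, so the vanishing holds. Once existence is secured, uniqueness is immediate: any finite trace on $\F^{(\infty)}$ is determined by its restrictions to the $\F^{(n)}$, and those restrictions are fixed by the prescribed measures via the modified Rieffel correspondence of Lemma \ref{lem:modified Riefell correspondence}.
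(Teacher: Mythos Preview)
Your approach is essentially the paper's: build $\tau^{(b,r)}$ inductively along the filtration via Lemma~\ref{lem:extension}, using the compatibility $\tfrac{1}{N}F(\mu_{i+1}^{(b,r)}) = \mu_i^{(b,r)}$ together with the proposition preceding Corollary~\ref{cor:compact} to verify the two hypotheses. For $n+1\le r$ your argument and the paper's coincide (your ``piecewise agreement transfers by linearity'' is fine, since both $\overline{\varphi}$ and $\tau^{(b,r)}_{[n]}$ are already well-defined linear functionals on $\F^{(n)}$).

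The one substantive difference is the $n\ge r$ step, and here your write-up is more complicated than necessary and the key reduction is not fully justified. The paper splits into $n=r$ and $n>r$ and invokes Pimsner's identity in the form $\F^{(n)}\cap \K(X^{\otimes n+1}) = \K(X^{\otimes n}J_X)$ (not merely $\K(X^{\otimes n})\otimes I\cap \K(X^{\otimes n+1})$). Thus only the \emph{top-level} restriction $\tau^{(b,r)}_n$ is relevant: for $n>r$ it is zero, and for $n=r$ it equals $\pi_r(\delta_b)$, which annihilates $\K(X^{\otimes r}J_X)$ because every generator $\theta_{\xi,\eta f}$ with $f\in J_X$ satisfies $(\eta f|\xi)_A(b)=\overline{f(b)}(\eta|\xi)_A(b)=0$. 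Your ``iterated Pimsner'' reduction to the family of ideals $J_{X^{\otimes n+1-k}}$ for all $k\le r$ is superfluous, and as stated it has a gap: the intersection $\F^{(n)}\cap \K(X^{\otimes n+1})$ is not a priori the sum $\sum_k \bigl(\K(X^{\otimes k})\cap\K(X^{\otimes n+1})\bigr)$, so verifying that each $\mu_k^{(b,r)}$ kills $J_{X^{\otimes n+1-k}}$ does not by itself force $\tau^{(b,r)}_{[n]}$ to vanish on the whole intersection. Once you replace this step with the single top-level use of Pimsner's identity, your argument goes through exactly as in the paper.
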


\begin{proof}
Recall that 
$\F^{(n)} = A \otimes I + \K(X)\otimes I + \K(X^{\otimes 2})\otimes I 
 \cdots + \K(X^{\otimes n}) \subset \L(X^{\otimes n})$. 
We identify $\F^{(n)}$ with the  
$C^*$-subalgebra of  $\O_{\gamma}$ generated by $A$ and $S_xS_y^*$ 
for $x,y \in X^{\otimes k}$, $k=1, \dots, n$
 by identifying $S_xS_y^*$  with 
$\theta_{x,y}$.  
We note that $\F^{(n+1)} = \F^{(n)} \otimes I + \K(X^{\otimes n+1})$. 
We shall construct 
finite traces $\tau^{(b,r)}_{[n]}$ on $\F^{(n)}$ $n=0,1,2,\dots$ 
by induction on $n$. We  choose a suitable sequence 
$(\mu_n^{(b,r)})_n$ of finite Borel measures 
on $K$,  which  gives a sequence $(\tau^{(b,r)}_n)_n$ 
of compatible traces on $\K(X^{\otimes n})$ 
by the modified Riefell correspondence. We shall extend 
a trace $\tau^{(b,r)}_{[n]}$ on $\F^{(n )}$ and a trace 
$\tau^{(b,r)}_{n+1}$
on $\K(X^{\otimes n+1})$ to a trace  $\tau^{(b,r)}_{[n+1]}$ on 
$\F^{(n+1 )}$ by Lemma \ref{lem:extension}.

We put $\tau^{(b,r)}_{[0]} = \tau^{(b,r)}_0 = \pi_0(\mu^{(b,r)}_0)$ and
$\tau^{(b,r)}_1=\pi_1(\mu^{(b,r)}_1)$.
Then $\tau^{(b,r)}_{[0]}$ is a tracial state. In fact, 
Let $(u_i)_i$ be a basis for $X^{\otimes r}$. Then 
\[
\tau^{(b,r)}_{[0]}(1) = \pi_0(\mu^{(b,r)}_0)
= (\frac{1}{N^{r}}F^{r}(\delta_{b}))(1) = 1
\]
We show that $\tau^{(b,r)}_{[0]}$ extends to a 
trace $\tau^{(b,r)}_{1}$ on $\F^{(1)}$.
We note that for $a \in A$
\[
 \overline{\tau_1^{(b,r)}}(a)
= \pi_0\left(\frac{1}{N}F(\psi_1(\tau_1^{(b,r)}))(a)\right)
= \tau_0^{(b,r)}(a),
\]
because we have
$\displaystyle{\frac{1}{N}F(\mu^{(b,r)}_1) = \mu^{(b,r)}_0}$. 
For $a \in A \cap \K(X)$, we have
\[
 \tau_1^{(b,r)}(a) = \overline{\tau_1^{(b,r)}}(a)
   =  \pi_0^{(b,r)}(a).
\]
By Lemma \ref{lem:extension},
we can construct a finite trace $\tau^{(b,r)}_{[1]}$ on $A+\K(X)$ which
extends $\tau^{(b,r)}_{[0]}$ and $\tau^{(b,r)}_1$.

Difine a trace  $\tau^{(b,r)}_i :=\pi_i(\mu^{(b,r)}_i)$ on 
$\K(X^{\otimes i})$ for  $0  \le i \le r$ and 
a trace  $\tau^{(b,r)}_i := 0$ on 
$\K(X^{\otimes i})$for  $i >  r$.

By induction on $n$, assume that we constructed a
finite trace $\tau^{(b,r)}_{[n]}$ on $\F^{(n)}$  
satisfying that 
$\left.\tau^{(b,r)}\right|_{\K(X^{\otimes i})}=\pi_i(\mu^{(b,r)}_i)$ 
for  $1  \le i \le n$ and $\left.\tau^{(b,r)}\right|_A= \mu^{(b,r)}_0$. 

We consider the three cases that $n<r$, $n = r$ and $n>r$.

(i)(Case that $n<r$):\\
By Corollarly \ref{cor:compact}, 
for $0 \leq i \leq n$ and $T \in \K(X^{\otimes i})$,  we have  that
\[
 \overline{\tau^{(b,r)}_{n+1}}(T\otimes I) 
=\pi_i\left(\left(\frac{1}{N^{n-i}}\right)F(\mu^{(b,r)}_{n+1}))\right)(T)
= \pi_i\left(\mu^{(b,r)}_{i})\right)(T) = 
\tau^{(b,r)}_{[n]}(T\otimes I).
\]
Thus 
$\overline{\tau^{(b,r)}_{n+1}}= \tau^{(b,r)}_{[n]} $ on 
$\F^{(n)}$.
Therefore 
by Lemma \ref{lem:extension},
we can extend to  a finite trace $\tau^{(b,r)}_{[n+1]}$ on
$\F^{(n+1)}$.

(ii)(Case that $n =r$):\\
Since we put $\tau^{(b,r)}_{r+1}=0$,  
clearly, $\overline{\tau^{(b,r)}_{r+1}}=0$ on
$\F^{(r+1)}$.
Hence 
$\overline{\tau^{(b,r)}_{r+1}}=0 \leq \tau^{(b,r)}_{[r]} $ on 
$\F^{(r)}$. 

  Let $T \in \K(X^{\otimes r})\otimes I  \cap \K(X^{\otimes r+1})
=\K(X^{\otimes r}J_X)$.  We need to show that $\tau^{(b,r)}_r(T)=0$.
Any $T \in \K(X^{\otimes r}J_X)$ is written as 
$T = \sum_{i=1}^{\infty}\theta_{\xi_i,\eta_i f_i}$ for 
$f_i \in J_X$.  Since $f_i(b)=0$ for $b \in B_{\gamma}$,
it holds that
\[
 \tau_r^{(b,r)}(T) = \sum_{i=1}^{\infty} \mu_r^{(b,r)}((\eta_i|\xi_i)_Af_i)
 =  \sum_{i=1}^{\infty} \mu_r^{(b,r)}((\eta_i|\xi_i)_Af_i)(b)  = 0.
\]
By Lemma \ref{lem:extension},
we can extend to  a finite trace $\tau^{(b,r)}_{[n+1]}$ on
$\F^{(n+1)}$.

(iii)(Case that $n >r$):\\
Since we put $\tau^{(b,r)}_{n+1}=0$ and 
$\tau^{(b,r)}_{n}=0$, we have that 
$\overline{\tau^{(b,r)}_{n+1}}=0$ on
$\F^{(n+1)}$. Hence 
$\overline{\tau^{(b,r)}_{n+1}}=0 \leq \tau^{(b,r)}_{[n]} $ on 
$\F^{(n)}$. Since 
$\F^{(n)} \cap \K(X^{\otimes n+1})
= \K(X^{\otimes n+1}) \cap \K(X^{\otimes n}) 
= \K(X^{\otimes n}J_X) \otimes I $, 
$\tau^{(b,r)}_{[n]}$ and $\tau^{(b,r)}_{n+1}$ are equal to zero on the 
intersection $\F^{(n)} \cap \K(X^{\otimes n+1})$. 
Therefore 
by Lemma \ref{lem:extension},
we can extend to  a finite trace $\tau^{(b,r)}_{[n+1]}$ on
$\F^{(n+1)}$.

Hence 
we have constructed finite traces
$\tau^{(b,r)}_{[n]}$ on $\F^{(n)}$ $n=0,1,2,\dots$  such that
$\left.\tau_{[n]}^{(b,r)}\right|_{\F^{(n')}} = \tau_{[n']}^{(b,r)}$
for $0 \le n' \le n$.
The family $\tau^{(b,r)}_{[n]}$ on $\F^{(n)}$ $n=0,1,2,\dots$ 
gives a finite trace $\tau^{(b,r)}$ on $\F^{(\infty)}$ such
$\tau^{(b,r)}|A = \mu^{(b,r)}_0$ and 
$\left.\tau^{(b,r)}\right|_{\K(X^{\otimes i})}=\pi_i(\mu^{(b,r)}_i)$ 
for  $0  \le i \le r$ and 
$\left.\tau^{(b,r)}\right|_{\K(X^{\otimes i})}= 0$ for  $i >  r$.

\end{proof}

We shall construct  a continuous type trace $\tau^{(\infty)}$ 
which corresponds
to the Hutchinson measure $\mu_H$ on $K$.

For $f \in {\rm C}(K)$, we put
\[
 G(f)(y) = \frac{1}{N} \sum_{j=1}^N f(\gamma_j(y)).
\]
Then $G:{\rm C}(K) \rightarrow {\rm C}(K)$ is a positive linear map. 
We note that
the dual map $G^*$ is a positive linear map on ${\rm C}(K)^*$.
We denote by $P(K)$ the space of probability measure on $K$.
We note that $G^*$ conserves $P(K)$. 
We shall collect some folklore facts with simle direct proofs: 

\begin{lemma} \label{lemma:contraction}
Let $\gamma$ be a self-similar map on $K$. 
Then we have the following: 
\begin{enumerate}
\item There exists a metric $L$ on a compact space $P(K)$ such
that  $G^*$ is a proper contraction with respect to $L$.
\item
There exists a unique fixed 
point $\mu_H$ in $P(K)$ under $G^*$ , 
which is called the Hutchinson measure $\mu_H$
on $K$, that is, $G^*(\mu_H) = \mu_H$. 
\item $\cap _{n=1}^{\infty} G^{*n}(P(K)) = \{\mu_H \}$
\item
 $\frac{1}{N}F(\mu_H) = G^*(\mu_H)= \mu_H$. 
\end{enumerate}

\end{lemma}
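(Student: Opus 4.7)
The plan is to equip $P(K)$ with the Kantorovich--Hutchinson (dual Lipschitz) metric
\[
L(\mu,\nu) \;=\; \sup\{\,|\mu(f)-\nu(f)| \,:\, f\in \Lip(K),\ \|f\|_{\Lip}\le 1\,\},
\]
and then apply the Banach contraction theorem. First I would recall (or verify directly) that $L$ metrizes the weak-$*$ topology on $P(K)$, so that $(P(K),L)$ is a compact, hence complete, metric space whose $L$-diameter is bounded by $\mathrm{diam}(K)<\infty$.

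For (1), set $c:=\max_j \Lip(\gamma_j)$, which is $<1$ because each $\gamma_j$ is a proper contraction. For $f\in\Lip(K)$ with $\|f\|_{\Lip}\le 1$, the bound
\[
|Gf(y)-Gf(y')|\le \frac{1}{N}\sum_{j=1}^N |f(\gamma_j(y))-f(\gamma_j(y'))|\le c\, d(y,y')
\]
shows $\|Gf\|_{\Lip}\le c$, so $L(G^*\mu,G^*\nu)=\sup_{\|f\|_{\Lip}\le 1}|\mu(Gf)-\nu(Gf)|\le c\,L(\mu,\nu)$, the contraction bound required in (1). Then (2) is immediate from the Banach fixed point theorem on $(P(K),L)$; and the uniform estimate $L(G^{*n}\mu,\mu_H)\le c^n\,\mathrm{diam}(P(K))$ gives (3), since each compact set $G^{*n}(P(K))\subset P(K)$ has $L$-diameter tending to $0$, forcing the nested intersection to be $\{\mu_H\}$.

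For (4), $G^*\mu_H=\mu_H$ is part (2); it remains to check $\tfrac{1}{N}F(\mu_H)=G^*\mu_H$. A pointwise comparison gives
\[
NG(f)(y)-\tilde f(y)\;=\;\sum_{j=1}^N f(\gamma_j(y))-\sum_{x\in h^{-1}(y)}f(x)\;=\;\sum_{x\in h^{-1}(y)}\bigl(e_{\gamma}(x,y)-1\bigr)f(x),
\]
a function supported on the finite branch-value set $C_{\gamma}=h(B_{\gamma})$. So (4) reduces to showing $\mu_H(\{c\})=0$ for each $c\in C_{\gamma}$. Applying $G^*\mu_H=\mu_H$ to the singleton $\{c\}$ and using that each $\gamma_j$ is injective with one-sided inverse $h$ on $\gamma_j(K)$ yields the recursion $\mu_H(\{c\})=\tfrac{e_{\gamma}(c,h(c))}{N}\mu_H(\{h(c)\})$. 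Because $P_{\gamma}$ is finite, the forward $h$-orbit of $c$ is eventually periodic, and iterating around one period forces $\bigl(1-\prod_k e_{\gamma}/N\bigr)\mu_H(\{p\})=0$ at some periodic $p$; the product is strictly below $1$ unless every factor equals $N$, which would mean that at every point of the orbit all $\gamma_j$ collapse, placing the orbit inside $B_{\gamma}\cap P_{\gamma}$ and contradicting Assumption A(3)$'$. The main obstacle of the plan is precisely this atom-vanishing step; everything else is the classical Hutchinson contraction argument.
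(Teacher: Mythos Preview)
Your proposal is correct, and for parts (1)--(3) it follows exactly the paper's route: the Kantorovich--Hutchinson metric $L$, the estimate $\|Gf\|_{\Lip}\le c\|f\|_{\Lip}$, the Banach fixed point, and the finite-diameter argument for the nested intersection are precisely what the paper does (citing Hutchinson for the fact that $L$ metrizes the weak-$*$ topology).

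The only real difference is in (4). The paper simply asserts ``since $\mu_H$ does not have point mass'' and then equates $\tfrac{1}{N}\mu_H(\tilde a)$ with $\mu_H(Ga)$; you instead supply a proof that $\mu_H$ has no atoms on $C_\gamma$, which is what is actually needed. Your recursion argument is valid, and in fact under Assumption~A(3)$'$ it becomes simpler than you indicate: every point $h^k(c)$ of the forward $h$-orbit of $c\in C_\gamma$ lies in $P_\gamma$, hence by (3)$'$ is \emph{not} in $B_\gamma$, so each branch index $e_\gamma(h^k(c),h^{k+1}(c))$ equals $1$. The recursion therefore reads $\mu_H(\{c\})=N^{-m}\mu_H(\{h^m(c)\})$ outright, and at a periodic point $p$ of period $\ell$ one gets $\mu_H(\{p\})=N^{-\ell}\mu_H(\{p\})$, forcing $\mu_H(\{p\})=0$ since $N\ge 2$. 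So the case ``product equal to $1$'' that you rule out never actually arises. Either way, your treatment of (4) is more complete than the paper's, which relies on this non-atomicity as a known fact.
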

\begin{proof}(1)
We denote by ${\rm Lip}(K)$ the set of Lipschitz continuous functions
on $K$.  Recall that the Lipschitz semi-norm $\|\cdot \|_L$ on ${\rm
Lip}(K)$ is defined by
\[
 \|a\|_L = \sup_{x\ne y}\frac{|a(x)-a(y)|}{d(x,y)}, \ \ \text{ for } a \in C(K). 
\]
We define a function $L$ on $P(K) \times P(K)$ by
\[
 L(\mu,\nu) = \sup_{\|a\|_L \le 1}|\mu(a)-\nu(a)|.
\]
By Hutchinson \cite{H}, $L$ is a metric on $P(K)$  and the topology
on $P(K)$ given by $L$ coincides with the w*-topology.

We shall show
\[
 L(G^*(\mu),G^*(\nu)) \le c L(\mu,\nu)
\]
using $d(\gamma_j(x),\gamma_j(y)) \le c d(x,y)$ for each $j$.
We have
\begin{align*}
 \frac{|G(a)(x)-G(a)(y)|}{d(x,y)}
  \le & \frac{1}{N} \sum_{j=1}^N \frac{|a(\gamma_j(x)) -
                      a(\gamma_j(y))|}{d(x,y)} \\
  \le & \frac{1}{N} \sum_{j=1}^N \frac{|a(\gamma_j(x))-a(\gamma_j(y))|}
                  {d(\gamma_{j}(x),d(\gamma_j(y)))}
       \frac{d(\gamma_j(x)),\gamma_j(y))}{d(x,y)} \\
   \le & c\|a\|_L,
\end{align*}
and it follows that $\|G(a)\|_L \le c\|a\|_L$.  Then we have
\begin{align*}
 L(G^*(\mu),G^*(\nu)) = & \sup_{\|a\|_L \le 1} |\mu(G(a))-\nu(G(a))| \\
                 \le & \sup_{ \|a\|_L \le 1 } \|G(a)\|_L L(\mu,\nu) \\
                  \le & \sup_{\|a\|_L \le 1} c\|a\|_L L(\mu,\nu) \le
cL(\mu,\nu).  
\end{align*}
It holds that $ L(G^*(\mu),G^*(\mu)) \le cL(\mu,\nu)$.
This shows that $G^*$ is a proper contraction with respect to $L$.\\
(2) is a consequence of (1). \\
(3)Since $P(K)$ is compact, the diameter of $P(K)$ 
with respect to $L$ is finite. Therefore (1) and (2) implies (3). \\
(4)Since $\mu_H$ does not have point mass, for $a \in {\rm C}(K)$
\[ 
(\frac{1}{N}F(\mu_H))(a) 
= \frac{1}{N}\mu_H(\tilde{a})
=\mu_H(G(a))= 
(G^*(\mu_H))(a) = \mu_H(a).
\]
\end{proof}

\begin{prop}
Let $\gamma$ be a self-similar map on $K$ with assumption A. 
Let $\mu_H$ be the Hutchinson measure on $K$. Then 
there exists a unique 
finite trace $\tau^{\infty}$ on $\F^{(\infty)}$ such that 
$\tau^{\infty}(a) = \int a \mu_H$ for $a \in A = C(K)$ and 
\[
\tau^{\infty}(\theta_{x,y}) = \frac{1}{N^{i}}\mu_H( (y|x)_A) \ 
\text{ for } \ \ x,y \in X^{\otimes i} . 
\]
\end{prop}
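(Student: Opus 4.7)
The plan is to imitate the inductive construction of the discrete-type traces $\tau^{(b,r)}$, but using the Hutchinson measure $\mu_H$ at every level rather than a truncating sequence. Concretely, I would set $\mu_n^\infty := \mu_H$ for all $n \geq 0$, and define $\tau^\infty_n := \pi_n(\mu_H)$ on $\K(X^{\otimes n})$ for $n \geq 0$ (with $\tau^\infty_0 = \mu_H$ on $A$). The key observation that makes this constant choice compatible across all levels is Lemma~\ref{lemma:contraction}(4): $\frac{1}{N}F(\mu_H) = \mu_H$, so by iteration $\frac{1}{N^{r-k}} F^{r-k}(\mu_H) = \mu_H$ for all $0 \leq k \leq r$.

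I would then build the finite traces $\tau^\infty_{[n]}$ on $\F^{(n)}$ inductively, applying Lemma~\ref{lem:extension} to $\F^{(n+1)}$ with subalgebra $B = \F^{(n)} \otimes I$ and ideal $\K(X^{\otimes n+1})$. The Proposition that computes the canonical extension $\overline{\varphi}$ of $\varphi = \pi_{n+1}(\mu_H)$ shows that for each $0 \leq k \leq n$,
\[
\overline{\tau^\infty_{n+1}}|_{\K(X^{\otimes k}) \otimes I}
= \pi_k\!\left(\tfrac{1}{N^{n+1-k}} F^{n+1-k}(\mu_H)\right)
= \pi_k(\mu_H)
= \tau^\infty_{[n]}|_{\K(X^{\otimes k}) \otimes I}.
\]
Hence $\overline{\tau^\infty_{n+1}} = \tau^\infty_{[n]}$ on all of $\F^{(n)}$, and both hypotheses (1) and (2) of Lemma~\ref{lem:extension} are satisfied with equality. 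This produces a unique finite trace $\tau^\infty_{[n+1]}$ on $\F^{(n+1)}$ extending $\tau^\infty_{[n]}$ and $\tau^\infty_{n+1}$.

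The compatible family $(\tau^\infty_{[n]})_n$ then assembles into a finite trace $\tau^\infty$ on the inductive limit $\F^{(\infty)}$ satisfying the required formulas. Uniqueness is immediate, since the prescription determines the trace on the norm-dense subalgebra $\bigcup_n \F^{(n)}$. I do not anticipate a genuine obstacle here: all of the heavy technical work has already been done. The only point worth emphasizing is that, in contrast to the $\tau^{(b,r)}$ construction where one had to check a strict inequality on $\F^{(n)}$ together with the vanishing of $J_X$-elements at branch points, here the compatibility condition collapses to an exact equality at every stage thanks to the Hutchinson fixed-point identity $\frac{1}{N}F(\mu_H) = \mu_H$. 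This is precisely what allows $\mu_H$ to produce a non-truncated, continuous-type trace rather than a discrete one concentrated on the orbit of a branch point.
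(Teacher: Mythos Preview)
Your proposal is correct and follows essentially the same route as the paper: define $\tau^\infty_i=\pi_i(\mu_H)$, use the Hutchinson fixed-point identity $\tfrac{1}{N}F(\mu_H)=\mu_H$ together with the proposition computing $\overline{\varphi}$ to get $\overline{\tau^\infty_{n+1}}=\tau^\infty_{[n]}$ on $\F^{(n)}$, and then apply Lemma~\ref{lem:extension} inductively. The paper's proof is precisely this argument, written out on rank-one operators $\theta_{x,y}$.
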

\begin{proof}
Difine a trace  $\tau^{\infty}_i :=\pi_i(\mu_H)$ on 
$\K(X^{\otimes i})$ for $i = 0,1,2,\dots$, that is,  
$\tau^{\infty}(a) = \int a \mu_H$ for $a \in A = C(K)$ and 
\[
\tau^{\infty}(\theta_{x,y}) = \frac{1}{N^{i}}{\mu_H}( (y|x)_A)) \ 
\text{ for } \ \ x,y \in X^{\otimes i} . 
\]
We shall construct 
finite traces $\tau^{\infty}_{[n]}$ on $\F^{(n)}$ $n=0,1,2,\dots$ 
by induction on $n$.
By Corollarly \ref{cor:compact}, 
for $0 \leq i \leq n$ and $x,y \in X^{\otimes i}$,  we have  that
\[
 \overline{\tau^{\infty}_{n+1}}(\theta_{x,y} \otimes I) 
= (\left(\frac{1}{N^{i}}\right) (\left(\frac{1}{N^{n+1-i}}\right)   
  (F^{n+1-i}(\mu_H))((y|x)_A)
=(\left(\frac{1}{N^{i}}\right)(\mu_H)((y|x)_A) = 
\tau^{\infty}_{[n]}(\theta_{x,y} \otimes I)
\]
Thus 
$\overline{\tau^{\infty}_{n+1}}= \tau^{\infty}_{[n]} $ on 
$\F^{(n)}$.
Therefore 
by Lemma \ref{lem:extension},
we can extend to  a finite trace $\tau^{\infty}_{[n+1]}$ on
$\F^{(n+1)}$.
\end{proof}

\section
{Classification of traces on the core}

We investigate the place where  point masses appear on $K$ 
if a normalized trace
$\tau$
on $\F^{(\infty)}$ is restricted to $A$.

\begin{prop} \label{prop:no-pointmass} Let $\gamma$ be a self-similar map on $K$ with assumption A.
Then the restriction $\tau_0$ of a finite trace 
$\tau$ on $\F^{(\infty)}$
to $A$ does not have point mass except for points in 
$Orb = \cup_{b \in B_{\gamma}} \cup_{r=0}^{\infty} O_{b,r}$, 
where $O_{b,r} = h^{-r}(b)$. 
\end{prop}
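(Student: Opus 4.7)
The plan is to suppose, for contradiction, that $\mu_0(\{y\})>0$ for some $y\notin Orb$, and then use the point-mass transfer relations of \lemref{lem:point} to squeeze $\mu_0(\{y\})$ to zero. Write $\mu_n=\psi_n(\tau_n)$ for the Borel measure on $K$ associated, under the modified Rieffel correspondence, with $\tau_n=\tau|_{\K(X^{\otimes n})}$; note $\mu_0(K)=\tau(1)$. Since $y\notin Orb$ means $h^r(y)\notin B_\gamma$ for every $r\ge 0$, part (1) of \lemref{lem:point} applies at each step and induction gives $\mu_k(\{h^k(y)\})=N^k\mu_0(\{y\})$. Part (2) of the same lemma, iterated, gives $\mu_k(\{h^k(c)\})\le N^k\mu_0(\{c\})$ for arbitrary $c\in K$; applying this with $c\in h^{-k}(a)$, summing over the disjoint singletons, and using $\mu_0(K)=\tau(1)$ yields the universal estimate $\mu_k(\{a\})\le N^k\tau(1)/|h^{-k}(a)|$. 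Specializing $a=h^k(y)$ and combining the two displays produces the key inequality
\[
  \mu_0(\{y\})\le \frac{\tau(1)}{|h^{-k}(h^k(y))|}.
\]

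Next I would show that the denominator tends to infinity along a subsequence of $k$. If $h^k(y)\notin P_\gamma$ for every $k$, then the invariance and disjointness properties of $V=K\setminus P_\gamma$ used in the discussion of Assumption A show that the $N^k$ compositions $\gamma_{j_1}\circ\cdots\circ\gamma_{j_k}(h^k(y))$ are all distinct, so $|h^{-k}(h^k(y))|=N^k\to\infty$. Otherwise $h^{k_0}(y)\in P_\gamma$ for some $k_0$; since $h(P_\gamma)\subset P_\gamma$ and $P_\gamma$ is finite (Assumption A), the tail $(h^k(y))_{k\ge k_0}$ is eventually periodic, so along an arithmetic progression $k_j=k_1+jT$ one has $h^{k_j}(y)=a^{*}$ for all $j$. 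The task then reduces to the sublemma: for every $a\in K$, $|h^{-m}(a)|\to\infty$ as $m\to\infty$.

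The sublemma is the main obstacle and is the place where Assumption A (3) is used essentially. Since $h^{-(m+1)}(a)=\bigsqcup_{b\in h^{-m}(a)}h^{-1}(b)$ and each $h^{-1}(b)$ is non-empty (by surjectivity of $h$), the sequence $|h^{-m}(a)|$ is non-decreasing; if it were bounded it would stabilize at some $n^{*}\ge 1$, forcing $|h^{-1}(b)|=1$ for every $b\in h^{-m}(a)$ and all large $m$, i.e.\ $h^{-m}(a)\subset E:=\{b\in K:\gamma_1(b)=\cdots=\gamma_N(b)\}$ eventually. For $b\in E$ the unique element $b'\in h^{-1}(b)$ satisfies $\gamma_1(b)=\gamma_2(b)=b'$ with $1\ne 2$, whence $b'\in B_\gamma$. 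If additionally $b'\in E$, the same reasoning applied to $b'$ produces a point $b''\in B_\gamma$ lying in $O_{b',1}=h^{-1}(b')$, violating Assumption A (3). Therefore $h^{-1}(E)\cap E=\emptyset$, and the inclusions $h^{-(m+1)}(a)\subset E$ and $h^{-(m+1)}(a)=h^{-1}(h^{-m}(a))\subset h^{-1}(E)$ force $h^{-(m+1)}(a)=\emptyset$, contradicting the surjectivity of $h^{m+1}$. Feeding the resulting divergence of $|h^{-k_j}(a^{*})|$ back into the key inequality gives $\mu_0(\{y\})=0$, the desired contradiction. Apart from this combinatorial sublemma, which crucially uses the non-trivial hypothesis (3) of Assumption A, everything else is a direct iteration of \lemref{lem:point}.
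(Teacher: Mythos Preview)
Your argument is correct, and the overall architecture---assume a forbidden point mass, use \lemref{lem:point} to get $\mu_k(\{h^k(y)\})=N^k\mu_0(\{y\})$ and the reverse inequality $\mu_k(\{a\})\le N^k\tau(1)/|h^{-k}(a)|$, then force $|h^{-k}(h^k(y))|\to\infty$---is exactly the paper's strategy. The difference lies only in how the growth of $|h^{-k}(h^k(y))|$ is established. The paper does a direct case analysis on the smallest $i$ for which $h^{-i}(a_n)$ meets $B_\gamma$ and obtains the uniform quantitative bound $|h^{-n}(h^n(y))|\ge N^{n-1}$. You instead split on whether the forward orbit of $y$ ever enters $P_\gamma$; in the non-entering case the open-set condition on $V=K\setminus P_\gamma$ gives $|h^{-k}(h^k(y))|=N^k$, while in the eventually-periodic case you prove the general sublemma $|h^{-m}(a)|\to\infty$ by analyzing the ``totally collapsed'' set $E=\{b:\gamma_1(b)=\cdots=\gamma_N(b)\}$ and showing $h^{-1}(E)\cap E=\emptyset$ via Assumption~A(3). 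Your route is a bit more conceptual and yields the sublemma for \emph{every} $a\in K$, not just $a=h^n(y)$; the paper's route is more direct and gives an explicit exponential lower bound. Both are valid and rest on Assumption~A(3) at the same essential point.
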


\begin{proof}Let $\mu_{0} = \psi_0(\tau_{0})$ be the corresponding 
 Borel measures on $K$.  
Take a point  $a \in K$ which is not in $Orb$.  
Then for any $n = 0,1,2,3,\dots $, $a_n:=h^n(a) \notin B_{\gamma}$, 
where $a_0 = a$. Assume that $\mu_0(\{a\}) > 0. $ 
We shall show that $\#(h^{-n}(a_n)) \geq N^{n-1}$. Firstly,  consider 
the case that $h^{-1}(a_n)$ contains an element $b_{n-1} \in B_{\gamma}$. 
Then 
\[
(\cup_{r =1}^{n-1} h^{-r}(b_{n-1})) \cap B_{\gamma} = \emptyset 
\]
On the contrary suppose that 
$ (\cup_{r =1}^{n-1} h^{-r}(b_{n-1})) \cap B_{\gamma}$ 
was not empty. Then there exists an element  
$c \in (\cup_{r =1}^{n-1} h^{-r}(b_{n-1})) \cap B_{\gamma}$. 
Then there exists some $r= 1, \dots, n-1$ with $h^r(c) = b _{n-1}$. 
This contradicts to Assumption A \ (3). Therefore 
$(\cup_{r =1}^{n-1} h^{-r}(b_{n-1})) \cap B_{\gamma} = \emptyset $. 
Then $\#(h^{-(n-1)}(b_{n-1})) = N^{n-1}$. Since $\#(h^{-1}(a_{n}) \geq 1$, 
we have that $\#(h^{-n}(a_n)) \geq N^{n-1}$.

Secondly, consider 
the case that $h^{-1}(a_n) \cap B_{\gamma} = \emptyset$ and 
$h^{-2}(a_n)$ contains an element $b_{n-2} \in B_{\gamma}$. Then a 
similar argument shows that 
\[
(\cup_{r =1}^{n-2} h^{-r}(b_{n-2})) \cap B_{\gamma} = \emptyset 
\]
Thus $\#(h^{-(n-2)}(b_{n-2})) = N^{n-2}$ and $\#(h^{-1}(a_{n})) \geq N$. 
Therefore $\#(h^{-n}(a_n)) \geq N^{n-1}$.  
Consider the case that $h^{-i}(a_n) \cap B_{\gamma} = \emptyset$ 
for $i = 1,2, \dots, r-1$ and 
$h^{-r}(a_n)$ contains an element $b_{n-r} \in B_{\gamma}$ with 
$r \leq n$. 
A similar argument shows that $\#(h^{-n}(a_n)) \geq N^{n-1}$.
Finally consider the case that $h^{-i}(a_n) \cap B_{\gamma} = \emptyset$ 
for $i = 1,2, \dots, n$. Then $\#(h^{-n}(a_n)) \geq N^n$. 
In any cases, we have $\#(h^{-n}(a_n)) \geq N^{n-1}$.

By Lemma \ref{lem:point}, for any $x \in h^{-n}(a_n)$, 
\[
\mu_0(\{x\}) \geq \frac{\mu_n(\{a_n\})}{N^n} = \mu_0(\{a_0\})> 0. 
\]
Therefore $\mu_0(K) \geq \mu_0(h^{-n}(a_n)) \geq N^{n-1}\mu_0(\{a_0\})> 0.$ 
Taking $n \rightarrow \infty$, $\mu_0(K) = \infty$. 
This contradicts to that $\mu_0$ is a finite measure. 
Therefore  $\mu_0(\{a\}) = 0. $ 
\end{proof}

\begin{prop} Let $\gamma$ be a self-similar map on $K$ with assumption A.
Let  $\tau_0$ be the restriction of a finite trace 
$\tau$ on $\F^{(\infty)}$ to $A$. 
Let $\mu_{0} = \psi_0(\tau_{0})$  be the corresponding 
 Borel measures on $K$ 
by modified Riefell correspondence.
For $b \in B_{\gamma}$, $r \ge 0$, take any 
$x_{b,r}, x'_{b,r} \in O_{b,r}= h^{-r}(b)$, then 
point masses $\mu_0(\{x_{b,r}\}) = \mu_0(\{x'_{b,r}\})$
\end{prop}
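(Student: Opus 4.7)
The plan is to iterate Lemma~\ref{lem:point} along the backward $h$-orbit of $b$. The case $r=0$ is trivial since $h^{-0}(b)=\{b\}$, so we may assume $r\ge 1$ and fix $x\in h^{-r}(b)$. Set $x_j:=h^j(x)$ for $0\le j\le r$, so that $x_0=x$ and $x_r=b$.

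The key structural input is Assumption~A(3): for every $j$ with $0\le j\le r-1$ we have $x_j\in O_{b,r-j}$ with $r-j\ge 1$, and $O_{b,k}\cap B_\gamma=\emptyset$ for $k\ge 1$. Hence each of $x_0,\dots,x_{r-1}$ lies outside $B_\gamma$. This is precisely what lets us invoke the equality case~(1) of Lemma~\ref{lem:point} at every step rather than the inequality case~(2). Restricting $\tau$ to $\F^{(j+1)}$ and applying that lemma with $n=j$ (taking the lemma's $b$ to be $x_j$ and its $a$ to be $x_{j+1}=h(x_j)$) yields
\[
\mu_{j+1}(\{x_{j+1}\})=N\,\mu_j(\{x_j\}),\qquad 0\le j\le r-1.
\]
Telescoping these $r$ identities gives $\mu_r(\{b\})=N^r\mu_0(\{x\})$, hence
\[
\mu_0(\{x\})=\frac{1}{N^r}\,\mu_r(\{b\}).
\]
Since the right-hand side depends only on $b$ and $r$, any two preimages $x,x'\in h^{-r}(b)$ carry the same $\mu_0$-mass, which is the claim.

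The only delicate point is the verification that every intermediate $x_j$ with $j<r$ avoids the branch set; this is exactly where Assumption~A(3) is indispensable. Were some $x_j$ to lie in $B_\gamma$, only the inequality~(2) of Lemma~\ref{lem:point} would be available at that step and the telescoping would collapse to an inequality, destroying the equality of point masses. Beyond this single observation the proof is a routine chain of equalities.
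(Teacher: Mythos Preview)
Your argument is correct and is exactly the intended expansion of the paper's one-line proof, which just says ``This follows directly from Lemma~\ref{lem:point}.'' You have made explicit the iteration of case~(1) along the chain $x=x_0,\dots,x_r=b$ and the role of Assumption~A(3) in ensuring that each intermediate $x_j$ avoids $B_\gamma$; this is precisely the content the paper leaves to the reader.
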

\begin{proof}
This follows directly from Lemma \ref{lem:point}.
\end{proof}

Let $\tau$ be a finite trace on $\F^{(\infty)}$ 
and $\tau_i$ be the restriciton of $\tau$ to 
$\K(X^{\otimes i})$. 
Let $\mu_{i} = \psi_i(\tau_{i})$  be the corresponding 
 Borel measures on $K$ 
by modified Riefell correspondence.
For $b \in B_{\gamma}$ and $r \ge 0$, 
the proposition above helps us to  define a constant 
$c_{b,r} \ge 0$ by the point mass $c_{b,r}= \mu_0(\{x_{b,r}\})$ for 
any  $x_{b,r} \in O_{b,r}= h^{-r}(b)$. 
Let $0 \le i \le r$.  By Lemma \ref{lem:point}, the point mass of
$\mu_i$ at each point of 
$x \in O_{b,r-i}$ is determined by $\mu_i(\{x\}) = c_{b_,r}N^{i}$ 
and does not depend on the choice of such $x \in O_{b,r-i}$. 

\begin{lemma}
Let $\gamma$ be a self-similar map on $K$ with assumption A.
For any $b, b' \in B_{\gamma}$ and integers $r, r' \ge 0$, 
if $(b,r) \not= (b',r')$, then  $O_{b,r} \cap O_{b',r'}= \emptyset$. 
Moreover for any 
$z \in Orb = \cup_{b \in B_{\gamma}} \cup_{r=0}^{\infty} O_{b,r}$, 
there exist a unique $b \in B_{\gamma}$ and a unique 
integer $r \ge 0$ such that
$z \in O_{b,r}$. 
 
\end{lemma}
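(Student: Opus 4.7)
The plan is to prove the disjointness by contradiction and then read off the "moreover" clause essentially for free. Suppose $z \in O_{b,r} \cap O_{b',r'}$ with $(b,r)\neq(b',r')$. Since $O_{b,k}=h^{-k}(b)$, this means $h^r(z)=b$ and $h^{r'}(z)=b'$, and I would split into two cases according to whether $r=r'$.

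In the easy case $r=r'$, one has $b=h^r(z)=h^{r'}(z)=b'$, which forces $(b,r)=(b',r')$, a contradiction. In the remaining case, without loss of generality $r<r'$, so $k:=r'-r\ge 1$. Applying $h^k$ to the identity $h^r(z)=b$ yields $b'=h^{r'}(z)=h^k(b)$, so $b'\in h^k(B_{\gamma})\subset P_{\gamma}$. Combined with $b'\in B_{\gamma}$, this gives $b'\in B_{\gamma}\cap P_{\gamma}$, contradicting the equivalent reformulation (3)' of Assumption A (3) shown in the excerpt.

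The "moreover" clause is then immediate: if $z\in Orb$, existence of a pair $(b,r)$ with $z\in O_{b,r}$ is built into the definition of $Orb$, and uniqueness is exactly the disjointness just proved. The only even mildly subtle point is to invoke condition (3)' rather than (3) directly; once that equivalence (already established in the excerpt) is in hand, the argument is a single application of $h^{r'-r}$, so I do not anticipate any serious obstacle.
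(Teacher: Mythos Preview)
Your proof is correct and essentially the same as the paper's. The only cosmetic difference is that the paper splits cases according to whether $b=b'$ or $b\neq b'$ (and invokes Assumption A(3) directly rather than its reformulation (3)'), but the core step---deriving $h^{r'-r}(b)=b'$ from $h^{r}(z)=b$, $h^{r'}(z)=b'$ and contradicting (3)---is identical.
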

\begin{proof}
On the cotrary, assume that 
$O_{b,r} \cap O_{b',r'} \not= \emptyset$. Then there exists 
$x \in O_{b,r} \cap O_{b',r'}$.

Suppose that $b = b'$, then $r \not= r'$. We may assume that 
$r' > r$.  Since $h^r(x) = b = h^{r'}(x)$, 
we have $h^{r'-r}(b) = b$. This contradicts to Assumption A \ (3).

Suppose that $b \not= b'$. Since $h^r(x) = b$ and $h^{r'}(x) =b'$, 
$r \not= r'$. We may assume that $r' > r$. Then we have 
$h^{r'-r}(b) = h^{r'-r}(h^r(x)) = h^{r'}(x) =b'$. 
This contradicts to Assumption A \ (3).
Therefore $O_{b,r} \cap O_{b',r'}= \emptyset$. The  rest is clear. 
\end{proof}

Recall that we difine an endomorphism $\alpha$ on $C(K)$ by 
$(\alpha(a))(x) = a(h(x))$ for $a \in A = C(K)$ and $x \in K$. 
Then $(\alpha^i(a))(x) = a(h^i(x))$. 

\begin{prop}\label{prop:point-mass}
Let $\gamma$ be a self-similar map on $K$ with assumption A.
Let $\tau$ be  a finite trace $\tau$ on $\F^{(\infty)}$. 
For $b \in B_{\gamma}$ and integer $r \ge 0$, 
put a constant 
$c_{b,r} \ge 0$ by the point mass $c_{b,r}= \mu_0(\{x_{b,r}\})$ for 
any  $x_{b,r} \in O_{b,r}= h^{-r}(b)$. 
Then $\tau - \sum_{b \in B_{\gamma}}
\sum_{r=0}^{\infty} N^r c_{b,r} \tau^{(b,r)}$ is a positive finite trace
on $\F^{(\infty)}$ whose restriction to $A$ does not have positive
point mass.
\end{prop}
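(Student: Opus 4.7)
The plan is to realize $\sigma := \tau - \sum_{b,r} N^r c_{b,r}\tau^{(b,r)}$ as the positive finite trace on $\F^{(\infty)}$ whose Rieffel-corresponding measures are the non-atomic parts of $(\mu_n)$. Convergence of the series is immediate: the sets $O_{b,r}$ are pairwise disjoint, each contains $N^r$ distinct points (by Assumption~A applied to the backward orbits of branch points), and $\mu_0(\{x\}) = c_{b,r}$ for every $x \in O_{b,r}$, so $\sum_{b,r} N^r c_{b,r} = \mu_0(Orb) \leq \mu_0(K) < \infty$. Hence $\rho := \sum N^r c_{b,r}\tau^{(b,r)}$ converges in norm to a positive finite trace on $\F^{(\infty)}$, and $\sigma$ is a bounded tracial linear functional; what remains is positivity and the point-mass statement.

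The technical heart of the proof is a level-wise strengthening of Proposition~\ref{prop:no-pointmass}: for every $m \geq 0$, $\mu_m$ has atoms only at $Orb$. Suppose for contradiction $\mu_m(\{y\}) > 0$ for some $y \notin Orb$. Then the forward images $y_k := h^k(y)$ never enter $B_{\gamma}$ (else $y \in Orb$), so by Lemma~\ref{lem:point}(1) iterated forward, $\mu_{m+k}(\{y_k\}) = N^k\mu_m(\{y\})$ for all $k \geq 0$. Applying the backward cardinality estimate from the proof of Proposition~\ref{prop:no-pointmass} to $y_k$ at level $m+k$ gives $\#(h^{-(m+k)}(y_k)) \geq N^{m+k-1}$, and backward propagation via Lemma~\ref{lem:point} yields $\mu_0(\{x\}) \geq \mu_{m+k}(\{y_k\})/N^{m+k} = \mu_m(\{y\})/N^m$ for each preimage $x \in h^{-(m+k)}(y_k)$. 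Hence $\mu_0(K) \geq N^{k-1}\mu_m(\{y\})$, which diverges as $k \to \infty$, contradicting the finiteness of $\mu_0$.

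Via Lemma~\ref{lem:modified Riefell correspondence}, $\sigma|_{\K(X^{\otimes n})}$ corresponds to $\tilde{\mu}_n := \mu_n - \nu_n$, where $\nu_n := \sum_{b \in B_{\gamma}}\sum_{r \geq n} N^r c_{b,r}\mu_n^{(b,r)}$. Since $\mu_n^{(b,r)} = N^{-(r-n)}F^{r-n}(\delta_b)$ places mass $N^{-(r-n)}$ at each of the $N^{r-n}$ points of $O_{b,r-n}$, $\nu_n$ is purely atomic on $Orb$ with mass $N^n c_{b,s+n}$ at each $x \in O_{b,s}$; by the point mass relation this equals $\mu_n(\{x\})$, and by the previous paragraph $\mu_n$ has no other atoms, so $\tilde{\mu}_n = \mu_n|_{K\setminus Orb}$ is a positive finite measure. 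Using $\frac{1}{N}F(\mu_{n+1}^{(b,r)}) = \mu_n^{(b,r)}$ for $r > n$ and $\mu_n^{(b,n)} = \delta_b$, one computes $\nu_n - \frac{1}{N}F(\nu_{n+1}) = \sum_b N^n c_{b,n}\delta_b$. Meanwhile $\mu_n - \frac{1}{N}F(\mu_{n+1})$ is supported on $B_{\gamma}$ with value $\mu_n(\{b\}) - \frac{1}{N}\mu_{n+1}(\{h(b)\})$ at $b$; since $h(b) \in C_{\gamma} \subset P_{\gamma}$ lies outside $Orb$, the strengthened proposition of the preceding paragraph gives $\mu_{n+1}(\{h(b)\}) = 0$, leaving $\mu_n(\{b\}) = N^n c_{b,n}$, so $\mu_n - \frac{1}{N}F(\mu_{n+1}) = \sum_b N^n c_{b,n}\delta_b$ as well. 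Subtracting, $\tilde{\mu}_n = \frac{1}{N}F(\tilde{\mu}_{n+1})$ globally.

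Consequently $(\tilde{\mu}_n)$ is a sequence of positive finite measures satisfying the compatibility conditions of Corollary~\ref{cor:compact} with equality. Lemma~\ref{lem:extension} applied inductively on $n$ then produces a positive finite trace on $\bigcup_n \F^{(n)}$, and hence on $\F^{(\infty)}$, whose restrictions coincide with those of $\sigma$; by density this trace equals $\sigma$, so $\sigma$ is a positive finite trace. The restriction $\sigma|_A$ corresponds to $\tilde{\mu}_0 = \mu_0|_{K \setminus Orb}$, which has no point mass, establishing the second claim. The main obstacle is the level-$m$ extension of Proposition~\ref{prop:no-pointmass} in the second paragraph, which drives the entire argument.
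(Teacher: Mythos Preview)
Your proof is correct and takes a genuinely different route from the paper's.

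The paper establishes positivity of $\sigma$ by a direct operator-algebraic argument: for each $T \in (\F^{(p)})^+$ it builds carefully chosen localizing functions $f_m^{(b,r)} = \alpha^r(g_m^{(b,r,r)})$ supported near $O_{b,r}$, with disjoint supports and a monotonicity condition $\alpha^r(g_m^{(b,r,r)}) \leq \alpha^i(g_m^{(b,r,i)})$, and shows that $\tau(T) \geq \tau(T\sum_{b,r\leq s} f_m^{(b,r)}) \to \sum_{b,r\leq s} N^r c_{b,r}\tau^{(b,r)}(T)$ as $m\to\infty$. The limit is computed separately on each $\K(X^{\otimes i})$, with the case $i>r$ vanishing because $\mu_i$ has no atom at $h^{i-r}(b)\in P_{\gamma}$.

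Your approach is measure-theoretic and structural: you identify the Rieffel-corresponding signed measures of $\sigma$ as $\tilde\mu_n=\mu_n-\nu_n$, prove they are the non-atomic parts of $\mu_n$ (hence positive), verify the exact compatibility $\tilde\mu_n=\frac{1}{N}F(\tilde\mu_{n+1})$, and then rerun the construction machinery of Section~6 (Lemma~\ref{lem:extension} inductively) to rebuild a positive trace that must equal $\sigma$ by uniqueness. This is cleaner conceptually---one sees exactly what $\sigma$ is, rather than merely that it is positive---and it reuses existing machinery instead of introducing the cutoff functions $g_m^{(b,r,i)}$. Both arguments ultimately need the level-wise fact that $\mu_m$ has no atoms outside $Orb$; you prove this explicitly in your second paragraph, whereas the paper invokes it (for $\mu_i$ at the point $h^{i-r}(b)$) by citing Proposition~\ref{prop:no-pointmass}, which as stated covers only $\mu_0$. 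In that sense your write-up is actually a bit more complete on this point.
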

\begin{proof}
We shall show  that 
$\tau \ge 
\sum_{b \in B_{\gamma}}\sum_{r=0}^{\infty} N^r c_{b,r} \tau^{(b,r)}$. 
Since $c_{b,r}= \mu_0(\{x_{b,r}\})$ for 
any  $x_{b,r} \in O_{b,r}= h^{-r}(b)$, 
we note that 
\[
\sum_{b \in B_{\gamma}}\sum_{r=0}^{\infty} N^r c_{b,r}
= \mu_0(\cup_{b \in B_{\gamma}}\cup_{r=0}^{\infty} O_{b,r}) \leq 1.
\]
It is enough to show that for any finite positive integer $s$.
\[
\tau \ge \sum_{b \in B_{\gamma}}\sum_{r=0}^{s}
N^r c_{b,r} \tau^{(b,r)}
\]

Since $\tau$ and $\sum_{b \in B_{\gamma}}\sum_{r=0}^s\tau^{(b,r)}$ are
finite traces, it is sufficient to
show that
\[
 \tau(T) \ge \sum_{b \in B_{\gamma}}\sum_{r=0}^{s}
 N^r c_{b,r} \tau^{(b,r)}(T)
\]
for $T \in {\F^{(p)}}^+$ for each
positive integer $p$ with $s+1 \le p$.
We fix a positive integer $p$ and 
integers $r \ge 0$ such that $r+1 \le p$.
For $b\in B_{\gamma}$ and integers 
$i$ with $r \le i \le p$, we can choose a sequences
$\{g^{(b,r,i)}_m\}_{m=1,2,\dots}$ of elements in $A= C(K)$ satisfying the
following:
\begin{enumerate} 
 \item $0 \le g_m^{(b,r,i)}(x) \le 1$.
 \item $g_m^{(b,r,i)}(h^{i-r}(b))=1$.
 \item For each $i$, the diameter of the support of $g_m^{(b,r,i)}$
approaches
       zero as $m$ tends to $\infty$.
 \item $\alpha_r(g^{(b,r,r)}_m)(x) \le \alpha_i(g^{(b,r,i)}_m)(x)$ for
       each $x \in K$.
 \item For any $m$, if $(b,r) \not= (b',r')$,then 
the supports of $\alpha_r(g^{(b,r,r)}_m)$ and 
$\alpha_{r'}(g^{(b',r',r')}_m)$ are disjoint. 
\end{enumerate}
In fact, for $b \in B_{\gamma}$ and an integer $r \ge 0$, 
$O_{b,r} = h^{-r}(b)$ is a finite set.  Moreover 
if $(b,r) \not= (b',r')$, then  $O_{b,r} \cap O_{b',r'}= \emptyset$. 
Hence there exist open neibourhoods $U_{b,r}$  for $b \in B_{\gamma}$ and 
$0 \leq r \leq p$
such that $h^{-r}(U_{b,r}) \cap h^{-r'}(U_{b',r'}) = \emptyset$  if $(b,r) \not= (b',r')$. 
For $b\in B_{\gamma}$ and integers 
$i$ with $r <i \le p$, it is easy to find  a sequences
$\{g^{(b,r,i)}_m\}_{m=1,2,\dots}$  and 
$\{g'^{(b,r,r)}_m\}_{m=1,2,\dots}$ of 
 elements in $A= C(K)$ satisfying the conditions (1) -(3) above 
 and the support of $g^ {(b,r,r)}_m$ is included in $U_{b,r}$. 
 Then define 
 \[
g^{(b,r,r)}_m (y) = g'^{(b,r,r)}_m(y) \prod _{i=r+1}^p g^{(b,r,i)}_m (h^{i-r}y)
 \]
 
We put $f^{(b,r)}_m=\alpha_r(g^{(b,r,r)}_m)$.
Let $Q = \sum_{b \in B_{\gamma}}\sum_{r=0}^sf^{(b,r)}_m$. 
Since $0 \le Q \le 1$,
we have
\begin{align*}
\tau(T) \geq \tau (T^{1/2}QT^{1/2}) = \tau(TQ) 
 = \sum_{b \in B_{\gamma}} \sum_{r=0}^{s} \tau(T f^{(b,r)}_m)
\end{align*}
for $T \in \F^{(p)}{}^+$.
We shall show that
\[
 \lim_{m \to \infty}\tau_i(Tf^{(b,r)}_m) = N^rc_{b,r}\tau^{(b,r)}_i(T)
\]
for $T \in \K(X^{\otimes i})$ with $0 \le i \le p$.

Since $\tau_i$ and $\tau^{(b,r)}_i$ are finite trace and $0 \le
f_m^{(b,r)} \le 1$,
it suffices to prove Lemma for a dense subset of $\K(X^{\otimes i})$.
We may assume that $T = \theta_{\xi,\eta}$ for 
$\xi$, $\eta \in X^{\otimes i}$.

Firstly, we consider the case that  $0 \le i \le r$.  
Let $\xi$, $\eta \in X^{\otimes i}$.
Then we have
 \begin{align*}
 \lim_{m \to \infty}
\tau_i(\theta_{\xi,\eta}\phi_i(\alpha^r(g^{(b,r,r)}_m)))
 = & \lim_{m \to
\infty}\tau_i(\theta_{\xi,\eta}\alpha^i(\alpha^{r-i}(g^{(b,r,r)}_m))) \\
 = & \lim_{m \to \infty}
 \tau_i\left(\theta_{\xi,\phi_i(\alpha^i(\alpha^{r-i}(g^{(b,r,r)}_m)))\eta}\right) \\
 = & \lim_{m \to \infty}
   N^i \psi_i(\tau_i)(\phi_i(\alpha^i(\alpha^{r-i}(g^{(b,r,r)}_m)))\eta |
\xi)_A) \\
 = & \lim_{m \to \infty}
    N^i\psi_i(\tau_i) ((\eta \alpha^{r-i}(g^{(b,r,r)}_m) | \xi)_A) \\
 = & \lim_{m \to \infty}
    N^i\psi_i(\tau_i)((\alpha^{r-i}(g^{(b,r,r)}_m) (\eta|\xi)_A) \\
 = & \sum_{(j_1,\dots,j_{r-i}) \in \Sigma^{r-i}}
     N^i c_{b,r} (\eta|\xi)_A(\gamma_{j_1}\circ \cdots \circ j_{r-i}(b)) \\
 = & c_{b,r}N^r (\tau^{(b,r)}_i)(\theta_{\xi,\eta}).
\end{align*}
Hence 
\[
 \lim_{m \to \infty} \tau_i(Tf^{(b,r)}_m) = c_{b,r}N^r \tau^{(b,r)}_i(T)
\]
for $T \in \K(X^{\otimes i})$ with $0 \le i \le r$.

Secondly, we consider the case  that $r+1 \le i \leq p$. 
 Let $\xi$, $\eta \in X^{\otimes i}$.
We have
\begin{align*}
  \lim_{m \to \infty}
\tau_i\left(\theta_{\xi,\eta} \phi_i(\alpha^i(g^{(b,r,i)}_m))\right)
 =  &   \lim_{m \to \infty}
   \tau_i\left(\theta_{\xi,\phi_i(\alpha^i(g^{(b,r,i)}_m)) \eta} \right)  \\
 = & \lim_{m \to \infty} \psi_i(\tau_i)
   \left( \phi_i(\alpha^i(g^{(b,r,i)}_m))\eta | \xi)_A \right)\\
 = & \lim_{m \to \infty} \psi_i(\tau_i)
   \left( (\eta g^{(b,r,i)}_m| \xi)_A \right)\\
= & \lim_{m \to \infty} \psi_i(\tau_i)
   \left( g^{(b,r,i)}_m (\eta | \xi)_A \right) = 0,
\end{align*}
because $\psi_i(\tau_i)$ does not have a positive point mass at
$h^{i-r}(b) \in C_{\gamma} \cup P_{\gamma}$, 
 by Propsition \ref{prop:no-pointmass}. Then we have
\[
 \lim_{m \to \infty}\tau_i(T \alpha^i(g^{(b,r,i)}_m)) = 0
\]
for $T \in \K(X^{\otimes i})$.  We assume $T \in \K(X^{\otimes i})$ be
positive.  Then we have
\begin{align*}
\tau_i(Tf^{(b,r)}_m)
 =& \tau_i(T^{1/2}\alpha^r(g^{(b,r,r)}_m)T^{1/2}) \\
 \le & \tau_i(T^{1/2}\alpha^i(g^{(b,r,i)}_m)T^{1/2}) \\
 =  & \tau_i(T \alpha^i(g^{(b,r,i)}_m)).
\end{align*}
Since $\tau_i(T \alpha^i(g^{(b,r,i)}_m))$ approaches $0$ as $m$ tends to
$\infty$,
we have
\[
 \lim_{m \to \infty}\tau_i(Tf^{(b,r)}_m)=0.
\]
for $T \ge 0$.  Since each $T \in \K(X^{\otimes i})$ is expressed as
$T = T_1 - T_2 + i(T_3 -T_4)$ with  $T_i \ge 0$, we have
\[
 \lim_{m \to \infty}\tau_i(Tf^{(b,r)}_m)=0 = c_{b,r}N^r \tau^{(b,r)}_i(T)
\]
for $T \in \K(X^{\otimes i})$.
\end{proof}

Nextly we shall analyze traces without point mass. 

\begin{lemma} Let $\tau$ be a finite trace on $\F^{(\infty)}$.
If the restriction $\tau_0 $of $\tau$ to $A$ does not have positive point mass,
then $\tau_n = \psi_{n}(\tau|_{\K(X^{\otimes n})})$ also does not have positive
point mass.
\end{lemma}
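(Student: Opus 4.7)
The plan is to argue by contradiction, propagating a hypothetical positive point mass of $\mu_n$ back down to a point mass of $\mu_0$ by iterating the pointwise estimate already proved in Lemma~\ref{lem:point}. Throughout, write $\mu_k := \psi_k(\tau|_{\K(X^{\otimes k})})$ for the Borel measure on $K$ associated with the restriction of $\tau$ to $\K(X^{\otimes k})$ under the modified Rieffel correspondence; in particular $\mu_0$ is the measure corresponding to $\tau|_A$.

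The key observation is that \emph{both} cases of Lemma~\ref{lem:point} yield the single pointwise inequality
\[
  \mu_k(\{b\}) \;\geq\; \tfrac{1}{N}\,\mu_{k+1}(\{h(b)\})
\]
for every $b \in K$ and every $k \geq 0$ (equality in case $b \notin B_\gamma$, strict inequality possibly in case $b \in B_\gamma$). Since $h$ is surjective --- indeed $h^{-1}(y) = \bigcup_{j=1}^N \gamma_j(y)$ is nonempty for each $y$ by Assumption~A --- given any $a_0 \in K$ we may inductively choose a chain $a_0, a_1, \ldots, a_n \in K$ with $h(a_{k+1}) = a_k$ for $0 \leq k \leq n-1$.

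Now suppose, toward a contradiction, that $\mu_n(\{a_0\}) > 0$ for some $n \geq 1$ and some $a_0 \in K$. Applying the displayed inequality successively along the chain $(a_k)_{k=0}^n$ gives
\[
  \mu_0(\{a_n\}) \;\geq\; \tfrac{1}{N}\mu_1(\{a_{n-1}\}) \;\geq\; \tfrac{1}{N^2}\mu_2(\{a_{n-2}\}) \;\geq\; \cdots \;\geq\; \tfrac{1}{N^n}\mu_n(\{a_0\}) \;>\; 0,
\]
contradicting the hypothesis that $\mu_0$ carries no positive point mass. There is essentially no real obstacle: the entire content of the argument is packaged into Lemma~\ref{lem:point}, which itself rests on Corollary~\ref{cor:compact} and the trace-extension machinery of Lemma~\ref{lem:extension}. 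The present proposition is simply an $n$-fold iteration of that point-mass estimate, made possible by the surjectivity of the branch map $h$ supplied by Assumption~A.
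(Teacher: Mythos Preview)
Your proof is correct and follows essentially the same approach as the paper: both argue by contrapositive, using Lemma~\ref{lem:point} to propagate a hypothetical point mass of $\mu_n$ back to a point mass of $\mu_0$. The paper compresses the argument to a single sentence citing Lemma~\ref{lem:point}, whereas you have spelled out the $n$-fold iteration along a preimage chain explicitly; the content is the same.
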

\begin{proof}
If $\psi_{n}(\tau|_{\K(X^{\otimes n})}))$ 
has a positive point mass,then $\tau_0$ also has a positive point mass at
some point by Lemma \ref{lem:point}. 
\end{proof}

It is a key point that we  can replace $F$ by $G^*$.

\begin{lemma} Let $\sigma$ be a finite trace of $\F^{(\infty)}$ 
and $\sigma_n = \sigma|_{\K(X^{\otimes n})}$ the restriction of $\sigma$ 
to $K(X^{\otimes n})$. 
If the corresponding measure 
$\psi_n(\sigma_n)$ does not have positive point
mass for any $n$, then it holds that for any $n$ 
\[
\frac{1}{N}F(\psi_{n+1}(\sigma_{n+1})) = G^{*}(\psi_{n+1}(\sigma_{n+1})).
\]
Moreover  $\mu_n = \psi_n(\sigma_n)$ is a provability
measure and  $\mu_n = G^*(\mu_{n+1})$ for  $n$.
\end{lemma}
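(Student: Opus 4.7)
The plan is to prove the three assertions in sequence: the identity $\tfrac{1}{N}F(\mu_{n+1}) = G^{*}(\mu_{n+1})$, then $\mu_n = G^{*}(\mu_{n+1})$, and finally that $\mu_n$ is a probability measure. For the first, a direct pointwise computation gives
\[
\tilde a(y) - N\,G(a)(y) = \sum_{x\in h^{-1}(y)}\bigl(1-e_\gamma(x,y)\bigr)\,a(x),
\]
a function supported on the finite set $C_\gamma = h(B_\gamma)$; since $\mu_{n+1}$ has no point mass, integrating against $\mu_{n+1}$ yields the equality. Corollary \ref{cor:compact}(2) then provides the inequality $\tfrac{1}{N}F(\mu_{n+1})\le \mu_n$, so I can set $\nu_n:=\mu_n-G^{*}(\mu_{n+1})\ge 0$ and the task reduces to showing $\nu_n=0$.

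The central step is to show $\nu_n$ is supported on $B_\gamma$. The functional $\sigma|_{\F^{(n+1)}}-\overline{\sigma_{n+1}}$ is a positive trace on $\F^{(n+1)}$ (by Lemma \ref{lem:extension}), and it vanishes on the ideal $\K(X^{\otimes n+1})$ because $\overline{\sigma_{n+1}}$ agrees with $\sigma_{n+1}$ there. Restricted to $\F^{(n)}\otimes I$, it must therefore vanish on the intersection $\F^{(n)}\cap \K(X^{\otimes n+1})=\K(X^{\otimes n}J_X)$ (Pimsner's formula recalled in the excerpt). Unpacking via the modified Rieffel correspondence $(\pi_n,\psi_n)$ gives $\nu_n\bigl(f(\eta|\xi)_A\bigr)=0$ for every $\xi,\eta\in X^{\otimes n}$ and every $f\in J_X$; and the fullness of $X^{\otimes n}$ over $A$ makes the inner products $(\eta|\xi)_A$ span a dense subspace of $A=C(K)$, so $\nu_n(f)=0$ for all $f\in J_X$. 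Since $J_X=\{f\in C(K):f|_{B_\gamma}=0\}$, the measure $\nu_n$ is supported on the finite set $B_\gamma$.

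To finish, note that $G^{*}$ preserves the no-point-mass property, via the explicit formula $G^{*}(\mu)(\{x\})=\tfrac{e_\gamma(x,h(x))}{N}\mu(\{h(x)\})$; hence $G^{*}(\mu_{n+1})$ has no point mass, and combined with the hypothesis on $\mu_n$, $\nu_n$ has no atoms. A non-negative measure supported on a finite set with no atoms must vanish, so $\mu_n = G^{*}(\mu_{n+1})$. Iterating this yields $\mu_0 = (G^{*})^n(\mu_n)$, and since $G(1_K) = 1_K$ the map $G^{*}$ preserves total mass, so $\mu_n(K) = \mu_0(K) = \sigma(1_K) = 1$ (for a tracial state $\sigma$). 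The principal obstacle is the identification of $\nu_n$ as a measure on $B_\gamma$: this requires simultaneously invoking the canonical-extension property of traces on ideals, Pimsner's intersection formula $\F^{(n)}\cap \K(X^{\otimes n+1})=\K(X^{\otimes n}J_X)$, and the fullness of $X^{\otimes n}$; once in hand, the atomless hypotheses dispose of the remainder instantly.
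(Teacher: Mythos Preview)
Your argument is correct, and in fact it supplies a justification that the paper's own proof skips. The paper establishes the first identity $\tfrac{1}{N}F(\mu_{n+1})=G^{*}(\mu_{n+1})$ by the same one-line observation you use (that $\tfrac{1}{N}\tilde a$ and $G(a)$ differ only on the finite set $C_{\gamma}$, which carries no $\mu_{n+1}$-mass), but then writes $\mu_0(1_K)=G^{*}(\mu_1)(1_K)$ and proceeds inductively without ever explaining why $\mu_n=G^{*}(\mu_{n+1})$ should hold rather than merely $\tfrac{1}{N}F(\mu_{n+1})\le\mu_n$ from Corollary~\ref{cor:compact}(2).

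Your contribution is precisely the missing step: you show that the positive measure $\nu_n=\mu_n-\tfrac{1}{N}F(\mu_{n+1})$ vanishes on $J_X$ (via Corollary~\ref{cor:compact}(3) together with Pimsner's identification $\F^{(n)}\cap\K(X^{\otimes n+1})=\K(X^{\otimes n}J_X)\otimes I$ and fullness of $X^{\otimes n}$), hence is supported on the finite set $B_\gamma$; since neither $\mu_n$ nor $G^{*}(\mu_{n+1})$ has atoms, $\nu_n=0$. This is exactly the mechanism behind Lemma~\ref{lem:point}(1), applied here to the entire measure rather than just to individual point masses. One small remark: your positivity claim for $\sigma|_{\F^{(n+1)}}-\overline{\sigma_{n+1}}$ on all of $\F^{(n+1)}$ is correct, but it follows from the general inequality $\overline{\varphi}(x)\le\psi(x)$ for any positive extension $\psi$ of a positive functional $\varphi$ on an ideal (using an approximate unit), rather than from Lemma~\ref{lem:extension} itself, which only states the inequality on the subalgebra $B$.
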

\begin{proof} Since $\psi_{n+1}(\sigma_{n+1})$ has no positive point mass,
it holds that
\[
\frac{1}{N}F(\psi_{n+1}(\sigma_{n+1}))(a)
= \frac{1}{N}\psi_{n+1}(\sigma_{n+1})(\tilde{a})
= \psi_{n+1}(\sigma_{n+1})(G(a))
= G^*(\psi_{n+1}(\sigma_{n+1}))(a).
\]
It holds that $\sigma_0(I) = \mu_0(1_{K}) = 1$, and $\mu_0$ is a
probability measure.  By
\[
 \mu_0(1_{K}) = G^*(\mu_1)(1_{K}) = \mu_1(G(1_{K}))
 = \mu_1(1_{K}),
\]
$\mu_1$ is also a probability measure.  Inductively, $\mu_n$ is
a probability measure for each $n$.

\end{proof}

We denote by $\mu_{H}$ the Hutchinson measure on $K$.  It is shown 
that
there exists a unique tracial state 
$\tau^{(\infty)}$ on $\F^{(\infty)}$ such that
$\tau^{(\infty)}_n =\mu_{H}$ for each $n$.  The tracial state
$\tau^{(\infty)}$
gives a continuous type (infinite type) KMS state on $\O_{\gamma}$.

\begin{prop} \label{prop:Hutchinson} Let $\gamma$ be a self-similar map on $K$ with assumption A.
Let $\sigma$ be a tracial state on $\F^{(\infty)}$.  If 
the restriction $\mu_0$ of $\sigma$ to $A$  
does not have positive point mass, then $\sigma$ coincides with
$\tau^{(\infty)}$.
\end{prop}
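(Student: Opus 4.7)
The plan is to use the chain of lemmas immediately preceding the statement together with the contraction principle in Lemma \ref{lemma:contraction} to pin down the sequence $(\mu_n)_n$ of measures associated with $\sigma$ uniquely.

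First, I would record the following consequence of the two lemmas just above the proposition. Let $\sigma_n$ denote the restriction of $\sigma$ to $\K(X^{\otimes n})$, and let $\mu_n = \psi_n(\sigma_n)$ be the corresponding measure on $K$ under the modified Rieffel correspondence. By the first of the two lemmas, the hypothesis that $\mu_0$ has no positive point mass propagates: $\mu_n$ has no positive point mass for every $n \geq 0$. Applying the second lemma then gives $\tfrac{1}{N}F(\mu_{n+1}) = G^{*}(\mu_{n+1})$ and, combined with Corollary \ref{cor:compact}(2), the key compatibility relation
\[
\mu_n \;=\; G^{*}(\mu_{n+1}) \qquad \text{for all } n \geq 0,
\]
with each $\mu_n$ a probability measure on $K$.

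Iterating this relation yields $\mu_n = G^{*k}(\mu_{n+k})$ for every $k \geq 0$, so $\mu_n \in G^{*k}(P(K))$ for all $k$. By Lemma \ref{lemma:contraction}(3), the intersection $\bigcap_{k=1}^{\infty} G^{*k}(P(K))$ equals the singleton $\{\mu_H\}$, where $\mu_H$ is the Hutchinson measure. Therefore $\mu_n = \mu_H$ for every $n \geq 0$. This identifies the measure data of $\sigma$ with that of $\tau^{(\infty)}$.

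Finally, I would observe that a tracial state on $\F^{(\infty)}$ is completely determined by its restrictions to the subalgebras $\F^{(n)}$, and each such restriction is in turn determined by $\mu_0$ together with the traces $\pi_n(\mu_n)$ on $\K(X^{\otimes n})$ for $n \geq 1$, via the modified Rieffel correspondence and the extension Lemma \ref{lem:extension}. Since $\mu_n = \mu_H = \tau^{(\infty)}_n$ for all $n$, we conclude that $\sigma = \tau^{(\infty)}$.

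The only substantive step is the appeal to Lemma \ref{lemma:contraction}(3), and that in turn rests on the fact that $P(K)$ has finite diameter in the Lipschitz metric $L$ together with $G^{*}$ being a proper contraction; both are already established. The rest is bookkeeping to ensure the absence of point masses persists at every level, which is exactly what lets us replace the operator $\tfrac{1}{N}F$ (which in general merely dominates $G^{*}$) by the honest contraction $G^{*}$ for which the fixed point is unique.
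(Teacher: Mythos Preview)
Your argument is correct and follows essentially the same route as the paper: use the two preceding lemmas to get $\mu_n = G^*(\mu_{n+1})$ with each $\mu_n \in P(K)$, iterate, and apply Lemma~\ref{lemma:contraction}(3) to force $\mu_n = \mu_H$ for all $n$, whence $\sigma = \tau^{(\infty)}$. One small remark: your appeal to Corollary~\ref{cor:compact}(2) is superfluous (and on its own gives only an inequality); the second of the two lemmas already contains the full equality $\mu_n = G^*(\mu_{n+1})$ as part of its conclusion, so you can cite that directly.
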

\begin{proof}Let 
$\sigma_n = \sigma|_{\K(X^{\otimes n})}$ be the restriction of $\sigma$ 
to $K(X^{\otimes n})$ and $\mu_n = \psi_n(\sigma_n)$
be the corresponding measure.  
Since 
$\frac{1}{N}F(\psi_{n+1}(\sigma_{n+1})) = G^{*}(\psi_{n+1}(\sigma_{n+1}))$ 
and $\frac{1}{N}F(\psi_{n+1}(\sigma_{n+1})) = \psi_{n}(\sigma_{n})$, 
we have 
$G^*(\mu_{n+1})=\mu_n$ ($n=0,1,\dots$). Hence
\[
\mu_n \in \bigcap_{k = n+1}^{\infty} G^{*k}(P(K)) 
= \bigcap_{k = 1}^{\infty} G^{*k}(P(K))
\]
Therefore  $\mu_n = \mu_H$ for any $n$. This implies that 
$\sigma  = \tau^{(\infty)}$
\end{proof}

We state the following Theorem on the classification of extreme 
tracial states on the core of $C^*$-algebras constructed from
self-similar maps.

\begin{thm}  
Let $\gamma$ be a self-similar map on $K$ with assumption A and 
$\O_{\gamma}$ the $C^*$-algebra associated with $\gamma$. 
Then for any 
finite trace $\tau$ on $\F^{(\infty)}$, 
there exist  unique constants $c_{b,r}\geq 0$ and 
$c_{\infty} \geq  $ such that 
\[
\tau = \sum_{b \in B_{\gamma}}
\sum_{r=0}^{\infty} N^r c_{b,r} \tau^{(b,r)} + c_{\infty}\tau^{\infty}
\]
Moreover, 
the  set of extreme tracial states of the core
$\F^{(\infty)}$ of $\O_{\gamma}$ is exactly 
\[
\{\tau^{(\infty)}\} \bigcup
\{\tau^{(b,r)}\,|\,b \in B_{\gamma},\, r=0,1,2,\dots \}.
\]

\end{thm}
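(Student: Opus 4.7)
The proof will proceed by combining the two main technical propositions already established: Proposition~\ref{prop:point-mass} provides the mechanism for stripping off all discrete (point-mass) components of an arbitrary finite trace, while Proposition~\ref{prop:Hutchinson} identifies any remaining ``continuous'' trace with $\tau^{\infty}$. Given any finite trace $\tau$ on $\F^{(\infty)}$, I would begin by restricting $\tau$ to $A = C(K)$ to obtain the finite Borel measure $\mu_0$ on $K$, and then for each $b \in B_{\gamma}$ and integer $r \ge 0$ define $c_{b,r} := \mu_0(\{x_{b,r}\})$ for any $x_{b,r} \in O_{b,r}$. This is well-defined by the lemma preceding Proposition~\ref{prop:point-mass} (the point mass at any point of $O_{b,r}$ is the same), and since the orbits $O_{b,r}$ are pairwise disjoint and contained in $K$, we have $\sum_{b,r} N^r c_{b,r} = \mu_0(Orb) \le \mu_0(K) < \infty$, so the series $\sum_{b,r} N^r c_{b,r} \tau^{(b,r)}$ converges absolutely in norm on the dual of $\F^{(\infty)}$.

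By Proposition~\ref{prop:point-mass}, the residue
\[
\tau' \ := \ \tau \ - \ \sum_{b \in B_{\gamma}} \sum_{r=0}^{\infty} N^r c_{b,r}\, \tau^{(b,r)}
\]
is a positive finite trace on $\F^{(\infty)}$ whose restriction to $A$ has no positive point mass. If $\tau' = 0$ I set $c_{\infty} = 0$ and stop; otherwise I put $c_{\infty} := \tau'(1) > 0$ and consider the tracial state $c_{\infty}^{-1}\tau'$, whose restriction to $A$ still has no point mass. Proposition~\ref{prop:Hutchinson} then forces $c_{\infty}^{-1}\tau' = \tau^{\infty}$, so $\tau' = c_{\infty}\tau^{\infty}$ and the decomposition is complete. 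Uniqueness of the constants follows immediately: $\tau^{(b,r)}|_A = \mu_0^{(b,r)}$ is supported on $O_{b,r}$, $\tau^{\infty}|_A = \mu_H$ is atomless, and the orbits are mutually disjoint, so evaluating $\tau|_A$ at any point of $O_{b,r}$ recovers exactly $c_{b,r}$, after which $c_{\infty} = \tau(1) - \sum N^r c_{b,r}$ is forced.

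For the classification of extreme tracial states, the decomposition formula already expresses every tracial state on $\F^{(\infty)}$ as a (generally infinite) convex combination of the states $\tau^{(b,r)}$ and $\tau^{\infty}$, so the extreme tracial states lie in the set $\{\tau^{\infty}\} \cup \{\tau^{(b,r)}\}$. Conversely, each $\tau^{(b,r)}$ is extreme, for if $\tau^{(b,r)} = s\sigma_1 + (1-s)\sigma_2$ with tracial states $\sigma_i$ and $0 < s < 1$, then applying the decomposition to each $\sigma_i$ and matching coefficients against the (unique) decomposition of $\tau^{(b,r)}$ forces $\sigma_1 = \sigma_2 = \tau^{(b,r)}$; the identical argument handles $\tau^{\infty}$.

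The main obstacle I anticipate is not conceptual but bookkeeping: verifying that the infinite sum converges in the right sense and that the subtraction yields a positive trace (both handled by Proposition~\ref{prop:point-mass} via the finite partial sums argument used in its proof), and ensuring the uniqueness argument applies to infinite convex combinations, not just finite ones. The substantive analytic content---the point-mass relations of Lemma~\ref{lem:point}, the orbit-support properties of the model traces, and the contraction-principle uniqueness of $\mu_H$---has already been assembled in the preceding sections, so this final theorem is essentially a matter of organizing these ingredients.
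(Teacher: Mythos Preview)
Your proposal is correct and follows essentially the same route as the paper: define $c_{b,r}$ as the point mass of $\mu_0$ on $O_{b,r}$, invoke Proposition~\ref{prop:point-mass} to obtain a nonnegative residual trace with no point mass on $A$, then apply Proposition~\ref{prop:Hutchinson} to identify it as a multiple of $\tau^{\infty}$, with uniqueness read off from the disjoint supports of the $\tau^{(b,r)}|_A$ and the atomlessness of $\mu_H$. Your treatment of extremality is slightly more explicit than the paper's (which simply appeals to uniqueness of the decomposition), but the content is the same.
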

\begin{proof}
Let $\tau$ be  a finite trace $\tau$ on $\F^{(\infty)}$. 
Define $c_{b,r}= \mu_0(\{x_{b,r}\})$ for 
any  $x_{b,r} \in O_{b,r}= h^{-r}(b)$. 
Then $\sigma := \tau - \sum_{b \in B_{\gamma}}
\sum_{r=0}^{\infty} N^r c_{b,r} \tau^{(b,r)}$ is a positive finite trace
on $\F^{(\infty)}$ whose restriction to $A$ does not have positive
point mass by Proposition \ref{prop:point-mass}.

If $\sigma = 0$, then nothing is to be proved.

If 
$\sigma \not= 0$, put $c_{\infty} = \sigma (1) \not= 0$. Then 
$\frac{1}{c_{\infty}}\sigma$ is a tracial state on $\F^{(\infty)}$ and 
its restriction  to $A$  
does not have positive point mass. 
Therefore $\frac{1}{c_{\infty}}\sigma$ coincides with
$\tau^{(\infty)}$ by Proposition \ref{prop:Hutchinson}. 
 Hence we have 
\[
\tau = \sum_{b \in B_{\gamma}}
\sum_{r=0}^{\infty} N^r c_{b,r} \tau^{(b,r)} + c_{\infty}\tau^{\infty}
\]
We shall show the uniqueness of the expression. Assume that
there exist   constants $d_{b,r}\geq 0$ and 
$d_{\infty} \geq  $ such that 
\[
\tau = \sum_{b \in B_{\gamma}}
\sum_{r=0}^{\infty} N^r d_{b,r} \tau^{(b,r)} + d_{\infty}\tau^{\infty}
\]
Then $d_{b,r}= \mu_0(\{x_{b,r}\})= c_{b,r}$, because we know 
the point mass of the restrictions of $\tau^{(b,r)}$ to $A=C(K)$ and 
the point mass of the restriction of $\tau^{\infty}$ to $A=C(K)$ is 
zero. Then $c_{\infty}\tau^{\infty} = d_{\infty}\tau^{\infty}$. 
This implies $c_{\infty} = d_{\infty}$. Hence the expression is unique.

The uniqueness of the expression implies that 
the  set of extreme tracial states of the core
$\F^{(\infty)}$ of $\O_{\gamma}$ is exactly 
\[
\{\tau^{(\infty)}\} \bigcup
\{\tau^{(b,r)}\,|\,b \in B_{\gamma},\, r=0,1,2,\dots \}.
\]

\end{proof}

We explain the construction of extreme traces by two typical examples.

\begin{exam} We describe the  extreme traces on the core of the
 $C^*$-algebras associated with the tent map explicitly.
We define discrete measures $\mu_i^{(r)}$on $[0,1]$ for $0 \le i \le r$ by
\[
 \mu_i^{(r)}(f) = \frac{1}{2^{r-i}}\sum_{(j_1,\dots,j_{r-i}) \in
\{\,1,2\,\}^i}
    f(\gamma_{j_1}\circ \cdots \circ \gamma_{j_{r-i}}(\delta_{1/2})).
\]
We define traces $\tau^{(r)}$ on $\F^{(\infty)}$ for $r \ge 0$ by 
Morita equivalence and 
 \[
  \tau^{(r)}_i =
\begin{cases}
 & \pi_i(\mu^{(r)}_i) \quad 0 \le i \le r \\
 & 0 \quad r+1 \le i
\end{cases}
 \]
for each $i$.
We can also define a trace $\tau^{(\infty)}$ on $\F^{(\infty)}$ by
\[
 \tau^{(\infty)}_i = \mu_{\infty},
\]
for each $i$ where $\mu_{\infty} = \mu_H $ is the normalized Lebesgue measure on
$[0,1]$ and coincides with  the Hutchinson measure.  
The the set of extreme traces on the core of $C^*$-algebras 
associated with the tent map on $[0,1]$ is exactly 
 $\{\,\tau^{(0)},\tau^{(1)},\dots,\tau^{(\infty)}\,\}$.
\end{exam}

\begin{exam}
We write down extreme traces on the core of the $C^*$-algebras
associated the self-similar map generating the Sierpinski gasket $K$ .
We define measures on $K$ by
\begin{align*}
\mu^{(S,r)}_i  = & \frac{1}{3^{r-i}} \sum_{(j_1,\dots,j_{r-i}) \in
 \{\,1,2,3\,\}^{r-i}}f(\gamma_{j_1}\circ \cdots \circ
\gamma_{j_{r-i}}(\delta_S))  \\
\mu^{(T,r)}_i  = & \frac{1}{3^{r-i}} \sum_{(j_1,\dots,j_{r-i}) \in
 \{\,1,2,3\,\}^{r-i}}f(\gamma_{j_1}\circ \cdots \circ
\gamma_{j_{r-i}}(\delta_T))\\
\mu^{(U,r)}_i  = & \frac{1}{3^{r-i}} \sum_{(j_1,\dots,j_{r-i}) \in
 \{\,1,2,3\,\}^{r-i}}f(\gamma_{j_1}\circ \cdots \circ
\gamma_{j_{r-i}}(\delta_U))
\end{align*}
Then we define traces $\tau^{(S,r)}$, $\tau^{(T,r)}$ and $\tau^{(U,r)}$
for $r=0,1,2,\dots$ using $\mu^{(S,r)}_i$s, $\mu^{(T,r)}_i$s and
 $\mu^{(U,r)}_i$s by Morita equivalence. 
We can also define a trace $\tau^{(\infty)}$ on $\F^{(\infty)}$ by
\[
 \tau^{(\infty)}_i = \mu_{H},
\]
for each $i$ where $\mu_{H}$ is the Hutchinson measure on the
Sierpinski
Gasket $K$. 
\end{exam}

Finally, we describe a  connection of the extreme
traces of the core and the KMS
states of $\O_{\gamma}$. A general characterization of 
KMS states 
is given by Laca-Neshveyev \cite{LN}. 
Let $\beta > \log N$ and we put
\[
  \rho^{(b,\beta)}= C_{b,\beta} \sum_{j=0}^{\infty}
 \left(\frac{N}{e^{\beta}}\right)^{j}\tau^{(b,j)},
\]
where $C_{b,\beta}$ is a normalizing constant.
\begin{prop}
Let $\gamma$ be a self-similar map on $K$ with assumption A.
Then $\tau^{(\infty)}$ extends to a $\log N$-KMS state on $\O_{\gamma}$ and 
for $\beta >\log N$, $\rho^{(b,\beta)}$ extends to a  
$\beta$-KMS state
of $\O_{\gamma}$.
\end{prop}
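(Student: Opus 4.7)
The approach is to invoke the Laca--Neshveyev characterization \cite{LN} of KMS states for the gauge action on a Cuntz--Pimsner algebra: a gauge-invariant state $\phi$ on $\O_{\gamma}$ is a $\beta$-KMS state if and only if its restriction $\tau$ to the core $\F^{(\infty)}$ is a trace satisfying
$$
\tau(\theta_{\xi,\eta}) = e^{-\beta n}\,\tau((\eta|\xi)_A)
$$
for every $n\ge 1$ and every $\xi,\eta \in X^{\otimes n}$. Using the modified Rieffel correspondence (Lemma \ref{lem:modified Riefell correspondence}) and the fullness of $X^{\otimes n}{}_A$, this condition translates to the associated measures $\mu_n = \psi_n(\tau|_{\K(X^{\otimes n})})$ satisfying $\mu_n = (Ne^{-\beta})^{n}\mu_0$. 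Conversely, any trace on $\F^{(\infty)}$ satisfying this scaling extends to a gauge-invariant $\beta$-KMS state on $\O_{\gamma}$ via $\phi := \tau\circ E$, where $E$ is the conditional expectation onto the core given by averaging over the gauge action.

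For $\tau^{(\infty)}$ the associated measure on $\K(X^{\otimes n})$ is $\mu_H$ for every $n\ge 0$, so the identity $\mu_n = (Ne^{-\beta})^{n}\mu_0$ holds for all $n$ precisely when $Ne^{-\beta}=1$, that is, $\beta=\log N$. Hence $\tau^{(\infty)}$ extends to a $(\log N)$-KMS state on $\O_{\gamma}$.

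For $\rho^{(b,\beta)}$ with $\beta>\log N$, convergence of the series defining it is automatic since each $\tau^{(b,j)}$ is a tracial state and $N/e^{\beta}<1$; the normalizing constant is $C_{b,\beta}=1-N/e^{\beta}$. To verify the KMS scaling, I would compute the measure $\mu_n$ corresponding to the restriction of $\rho^{(b,\beta)}$ to $\K(X^{\otimes n})$. Using $(\tau^{(b,j)})_n = \pi_n(\mu_n^{(b,j)})$ with $\mu_n^{(b,j)} = N^{-(j-n)}F^{j-n}(\delta_b)$ for $j\ge n$ and zero for $j<n$, and reindexing $k=j-n$, the geometric factors telescope:
$$
\mu_n = C_{b,\beta}\sum_{j\ge n}(Ne^{-\beta})^{j}\,\frac{F^{j-n}(\delta_b)}{N^{j-n}} = (Ne^{-\beta})^{n}\,C_{b,\beta}\sum_{k\ge 0}e^{-\beta k}F^{k}(\delta_b) = (Ne^{-\beta})^{n}\,\mu_0.
$$
Thus $\rho^{(b,\beta)}$ satisfies the KMS scaling with parameter $\beta$, and the Laca--Neshveyev criterion yields the desired $\beta$-KMS extension.

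The main work is to translate the Laca--Neshveyev KMS criterion into the measure-theoretic form above; once this translation is set up (via the corollary to Lemma \ref{lem:lem6}, which rewrites the sums $\sum_i \tau((u_i|au_i)_A)$ appearing in their criterion in terms of the operator $F$ and hence of the scaling condition $\mu_n=(Ne^{-\beta})^{n}\mu_0$), the telescoping identity displayed above is the only non-trivial ingredient, and the convergence is immediate from $\beta>\log N$.
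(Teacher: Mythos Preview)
Your argument is correct and considerably more explicit than the paper's, which simply appeals to the concrete description of KMS states obtained in \cite{IKW}. You instead verify directly that the core traces $\tau^{(\infty)}$ and $\rho^{(b,\beta)}$ satisfy the scaling relation $\tau(\theta_{\xi,\eta})=e^{-\beta n}\tau((\eta|\xi)_A)$, and then invoke the general principle that a gauge-invariant trace on the core satisfying this scaling lifts via the conditional expectation to a $\beta$-KMS state. The telescoping computation for $\rho^{(b,\beta)}$ is clean and correct, and the identification $C_{b,\beta}=1-Ne^{-\beta}$ is right.

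One small caveat: the criterion you attribute to \cite{LN} is not literally stated there in terms of traces on the core; Laca--Neshveyev parametrize KMS states by traces on the coefficient algebra $A$ subject to an induction condition. The equivalence with your core formulation is routine (gauge-invariance of KMS states plus a direct check of the KMS relation on monomials $S_\xi S_\eta^*$), but strictly speaking that step is an additional, if standard, computation rather than a direct citation. If you want the argument to be fully self-contained you should either sketch that reduction or point to a source where the core-trace version is stated; alternatively, the paper's own route via \cite{IKW} already packages this. Your approach has the merit of making transparent \emph{why} the particular combination $\sum_j (Ne^{-\beta})^j\tau^{(b,j)}$ is the right one: it is engineered precisely so that the geometric reindexing produces the factor $(Ne^{-\beta})^n$.
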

\begin{proof}
This follows from the fact that the $\beta$-KMS state of $\O_{\gamma}$ 
is described concretely in section 6 of our paper \cite{IKW} with Izumi. 

\end{proof}

\end{document}